\newtheorem{thm}{Theorem}[subsection]
\newtheorem{cor}[thm]{Corollary}
\newtheorem{Def}[thm]{Definition}
\newtheorem{prop}[thm]{Proposition}
\newtheorem{rem}[thm]{Remark}
\newtheorem{ex}[thm]{Example}
\newcommand{\bdfn}{\begin{Def} \rm}
\newcommand{\edfn}{\end{Def}}
\newcommand{\ra}{\rightarrow}
\newcommand{\Ra}{\Rightarrow}
\newcommand{\Lglra}{\Longleftrightarrow}
\newcommand{\es}{\emptyset}
\newcommand{\ci}{\subseteq}
\newcommand{\e}{\varepsilon}
\newcommand{\la}{\lambda}
\newcommand{\mb}{\mathbb}
\newcommand{\mc}{\mathcal}
\newcommand{\sm}{\setminus}
\newcommand{\iy}{\infty}
\newcommand{\beqa}{\begin{eqnarray*}}
\newcommand{\eeqa}{\end{eqnarray*}}
\newcounter{cnt1}
\newcounter{cnt2}
\newcounter{cnt3}
\newcounter{cnt4}
\newcommand{\blr}{\begin{list}{$($\roman{cnt1}$)$} {\usecounter{cnt1}
 \setlength{\topsep}{0pt} \setlength{\itemsep}{0pt}}}
\newcommand{\blR}{\begin{list}{\Roman{cnt4}.\ } {\usecounter{cnt4}
 \setlength{\topsep}{0pt} \setlength{\itemsep}{0pt}}}
\newcommand{\bla}{\begin{list}{$(\alph{cnt2})$} {\usecounter{cnt2}
 \setlength{\topsep}{0pt} \setlength{\itemsep}{0pt}}}
\newcommand{\bln}{\begin{list}{$($\arabic{cnt3}$)$} {\usecounter{cnt3}
 \setlength{\topsep}{0pt} \setlength{\itemsep}{0pt}}}
\newcommand{\el}{\end{list}}
\begin{document}

\title[Uniqueness of Hahn--Banach extensions and its variants ]{Uniqueness of Hahn--Banach extensions and some of its variants }

\author[Daptari ]{Soumitra Daptari}
\author[Paul]{Tanmoy Paul}

\address{Dept. of Mathematics\\
Indian Institute of Technology Hyderabad\\
, India}

\email{ma17resch11003@iith.ac.in \&
tanmoy@iith.ac.in}

\subjclass[2020]{Primary 46A22, 46B10, 46B25; Secondary 46B20, 46B22. \hfill
\textbf{\today}}

\keywords{property-$(wU)$, property-$(U)$, property-$(SU)$, property-$(HB)$, $L_1$-predual, $M$-ideal, $k$-Chebyshev subspace}

\maketitle
\begin{abstract}
In this study, we analyze the various strengthening and weakening of the uniqueness of the Hahn--Banach extension. In addition, we consider the case in which $Y$ is an ideal of $X$. In this context, we study the property-$(U)/ (SU)/ (HB)$ and property-$(k-U)$ for a subspace $Y$ of a Banach space $X$. 
We obtain various new characterizations of these properties.
We discuss various examples in the classical Banach spaces, where the aforementioned properties are satisfied and where they fail. It is observed that a hyperplane in $c_0$ has property-$(HB)$ if and only if it is an $M$-summand. Considering $X, Z$ as Banach spaces and $Y$ as a subspace of $Z$, by identifying $(X\widehat{\otimes}_\pi Y)^*\cong \mc{L}(X,Y^*)$, we observe that an isometry in  $\mc{L}(X,Y^*)$ has a unique norm-preserving extension over $(X\widehat{\otimes}_\pi Z)$ if $Y$ has property-$(SU)$ in $Z$.
It is observed that a finite dimensional subspace $Y$ of $c_0$ has property-$(k-U)$ in $c_0$, and if $Y$ is an ideal, then $Y^*$ is a $k$-strictly convex subspace of $\ell_1$ for some natural $k$. 
\end{abstract}
\maketitle

{\centering\footnotesize dedicated to the memory of Professor Ashoke K. Roy\par}

\section{Introduction}
The Hahn--Banach extension theorem is one of the most long-standing topics in functional analysis. Many illustrious mathematicians including Lawrence Narici, Edward Beckenstein, A. R. Halbrook, Mart Poldvere, Dirk Werner, and Eve Oja have contributed to the understanding of this continuously-discussed theorem. The reader can follow the link in \cite{MO} to obtain an overview of the importance of this theorem. 
This paper follows the roadmap of the unique existence of norm-preserving extensions. For a subspace $Y$ of $X$, if the norm-preserving extension over $X$ is unique for every $y^*\in Y^*$, the subspace is said to have {\it property-$(U)$} (see \cite{P}). A slightly weaker property than that stated above is given as follows.

We say $Y$ has {\it property-$(wU)$} whenever $y^*\in Y^*$ satisfying $\|y^*\|=y^*(y_0)$ for some $y_0\in S_Y$ has a unique norm-preserving extension to $X$. The property-$(wU)$ is known as the {\it weak Hahn--Banach extension property} (see \cite{LA}) in the literature; we maintain similarities with this property while referring to the other properties along with it. We continue to study the uniqueness of the Hahn--Banach extension.
In a recent paper (\cite{DPR}), the property-$(SU)$ was extensively studied. A potpourri presentation of the said property led us to further strengthen  this property. In this paper, we focus on property-$(wU)$, property-$(HB)$, and property-$(k-U)$.

We now list a few notations and definitions that are used throughout this manuscript.

\subsection{Notations and Definitions} 
$X$ denotes a Banach space over real scalars. We consider $Y$ to be a closed subspace of $X$. $B_X$ and $S_X$ represent the closed unit ball and the unit sphere of $X$, respectively. 
\begin{enumerate}
	\item $Y^\#=\{x^*\in X^*:\|x^*\|=\|x^*|_Y\|\}$.
	\item $HB(y^*)=\{x^*\in X^*:\|x^*\|=\|y^*\|~\&~x^*|_Y=y^*\}$, for $y^*\in Y^*$.
	\item For $A\ci X$, $\dim A=\dim (\textrm{span} \{A-a\})$, where $a\in A$.
	\item For $x\in X\sm\{0\}$, $S(X^*,x)=\{x^*\in S_{X^*}:x^*(x)=\|x\|\}$.
	\item $Y^\perp=\{x^*\in X^*:x^*|_Y=0\}$.
	\item $d(x,Y)=\inf_{y\in Y}\|x-y\|$ and $P_Y(x)=\{y\in Y:\|x-y\|=d(x, Y)\}$.
	\item For Banach spaces $X, Z$, $X\cong Z$ means that $X$ is isometrically isomorphic to $Z$.
\end{enumerate}

We now provide a few important definitions that are relevant to the central theme of this investigation.

A projection $P:X\ra X$ is a linear map with $\|P\|<\iy$, unless otherwise stated, such that $P^2=P$. By a {\it contractive projection} $P$ on $X$, we mean $P:X\ra X$ such that $\|P\|=1$. A {\it bi-contractive projection} $P:X\ra X$ is a projection where $\|P\|=1=\|I-P\|$.

\bdfn
A subspace $Y$ is called an {\it ideal} in $X$ if there exists a contractive projection $P$ on $X^*$ with $\ker (P)=Y^\perp$.
\edfn

A few stronger versions of property-$(U)$ are the following.

\bdfn\label{D1}
Let $Y$ be a subspace of $X$, and suppose there exists a projection $P:X^*\ra Y^\perp$ with $\|P\|=1$. Let $G=(I-P)(X^*)$.
\bla
\item \cite{O}  $Y$ is said to have {\it property-$(SU)$} in $X$ if for each $x^*\in X^*$,
$$\|x^*\|>\|y^\#\|,$$
$\text{ whenever } x^*=y^\#+y^\perp \text{ with } y^\#\in G, y^\perp(\neq 0) \in Y^{\perp}.$
\item \cite{H} $Y$ is said to have {\it property-$(HB)$} in $X$ if for each $x^*\in X^*$,
$$\|x^*\|>\|y^\#\| ~\&~ \|x^*\|\geq \|y^\perp\|,$$
$$\text{ whenever } x^*=y^\#+y^\perp \text{ with } y^\#\in G, y^\perp(\neq 0) \in Y^{\perp}.$$
\el
\edfn

\bdfn \cite{HWW}
A subspace $J$ is called an {\it $M$-ideal} in $X$ if there exists a subspace $Z$ of $X^*$ such that $X^*=J^{\perp}\oplus_{\ell_1} Z$.
\edfn

Every $M$-ideal has property-$(HB)$. 
A subspace $J$ is said to be {\it semi $M$-ideal} if $X^*=J^\perp\oplus_{\ell_1} Z$ for some closed set $Z$ in $X^*$. The obvious choice for $Z$ for the (semi) $M$-ideal is $J^\#$. A semi $M$-ideal with a linear $J^\#$ is an $M$-ideal. $J$ is a (semi) $M$-ideal if and only if it satisfies the $(2) 3$-ball property in $X$ (see \cite[Theorem 6.9, 6.10]{L}).

From \cite{LH} and \cite{HWW}, we recall that a Banach space $X$ is called {\it $L_1$-predual} if $X^*$ is isometrically isomorphic to $L_1(\mu)$ for some measure space $(\Omega,\Sigma,\mu)$, and $X$ is called {\it $M$-embedded} if its canonical image in $X^{**}$ is an $M$-ideal in $X^{**}$. 

By a {\it hyperplane} in $X$, we mean a subspace of type $\{x\in X:x^*(x)=0\}$ for some $x^*\in X^*$. A finite co-dimensional subspace is a finite intersection of the hyperplanes in $X$.

\bdfn \label{D2}\cite{X}
A subspace $Y$ is said to have {\it property-$(k-U)$} in $X$, for some $k\in\mb{N}$ if $\dim HB(y^*)\leq k-1$ for all $y^*\in S_{Y^*}$.
\edfn

Let us recall that $Y$ is said to be a {\it proximinal} subspace of $X$ if $P_Y(x)\neq\es$ for all $x$. Subspace $Y$ is said to be a {\it Chebyshev subspace} if $P_Y(x)$ is a singleton for all $x\in X$. In general, the map $P_Y:X\ra 2^Y$ is closed convex valued and not necessarily linear if $Y$ is Chebyshev. We call $P_Y$ the {\it metric projection} of $Y$.

\bdfn\cite{S}\label{D3}
Let $k\in \mathbb{N}$.
\bla 
\item A proximinal subspace $Y$ of $X$ is said to be a {\it $k$-Chebyshev} subspace if $0\leq \dim (P_Y(x))\leq k-1$ for all $x\in X$.
\item $X$ is said to be {\it $k$-strictly convex} if for any $k+1$ linearly independent elements $x_1, x_2,\dots, x_{k+1}\in S_X$, we have $\|\sum_{i=1}^{k+1}x_i\|<k+1$.
\el
\edfn 
 For a closed and bounded set $B$ of a normed space $X$, by $\textrm{ext} (B)$, we denote the set of all {\it extreme points} of $B$.
 
\subsection{A short background of this work} The smoothness of a (non-zero) vector $x$ in a Banach space is related to the uniqueness of the functional in its state space $S(X^*,x)$. Property-$(U)$ (and property-$(wU)$) extends this notion to subspaces. If there exists a linear Hahn--Banach extension operator $S:Y^*\ra X^*$ (i.e., $Y$ is an ideal in $X$), property-$(U)$ ensures the uniqueness of the existence of such an $S$ and also establishes an embedding of $Y^*$ into $X^*$; in this case, $X^*=Y^\#\bigoplus Y^\perp$, a vector space decomposition. Note that property-$(HB)$ ensures that the corresponding projection is bi-contractive. It is relevant that property-$(U)$ is not invariant under isometry. 

 We also extend our study to cases in which a subspace $Y$ fails to have property-$(U)$, but the dimension of the set $HB(y^*)$ has a finite upper bound for $y^*\in Y^*$, known as property-$(k-U)$, as defined in Definition~\ref{D2}. If $Y$ is a subspace of $X$, then
 
 $Y$ has property-$(U)$ in $X$ $\Lglra$ $Y^\perp$ is a Chebyshev subspace of $X^*$ \cite{P}.  
 
 $X^*$ is a strictly convex $\Lglra$ every subspace of $X$ has property-$(U)$ \cite{T, F}.
 
 The $k$-valued analogues of these cases are as follows.
 
 \begin{thm}\label{T11}\cite{X}
 	Let $k\in \mathbb{N}$ and $Y$ be a subspace of $X$.
 	\bla
 	\item $Y$ has property-$(k-U)$ in $X$ if and only if $Y^\perp$ is a $k$-Chebyshev subspace of $X^*$.
 	\item  $X^*$ is $k$-strictly convex if and only if every subspace of $X$ has property-$(k-U)$ in $X$.
 	\el 
 \end{thm}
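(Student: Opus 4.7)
The plan for (a) is to use the canonical isometry $Y^* \cong X^*/Y^\perp$ to translate uniqueness of Hahn--Banach extensions into best approximation in the dual. First I would note that $Y^\perp$ is automatically proximinal in $X^*$: for any $x^* \in X^*$, a Hahn--Banach extension $x_0^*$ of $x^*|_Y$ with $\|x_0^*\| = \|x^*|_Y\|$ gives $x^* - x_0^* \in P_{Y^\perp}(x^*)$. For $x^* \notin Y^\perp$, setting $y^* = x^*|_Y$, one checks directly that $z \mapsto x^* - z$ is a bijection from $P_{Y^\perp}(x^*)$ onto $HB(y^*)$, so these affine sets have equal dimension; for $x^* \in Y^\perp$, $P_{Y^\perp}(x^*) = \{x^*\}$ has dimension zero. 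Using the scaling identity $\dim HB(\lambda y^*) = \dim HB(y^*)$ for $\lambda > 0$, the condition ``$\dim HB(y^*) \leq k-1$ for every $y^* \in S_{Y^*}$'' is equivalent to ``$\dim P_{Y^\perp}(x^*) \leq k-1$ for every $x^* \in X^*$'', which together with proximinality is precisely the $k$-Chebyshev property.

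For the forward direction of (b), I would assume $X^*$ is $k$-strictly convex and suppose for contradiction that some $Y \subseteq X$ fails property-$(k-U)$. Then there exist $y^* \in S_{Y^*}$ and $x_0^*, x_1^*, \ldots, x_k^* \in HB(y^*)$ with $\{x_i^* - x_0^*\}_{i=1}^k$ linearly independent. The key step is promoting this to linear independence of $\{x_i^*\}_{i=0}^k$: any relation $\sum \alpha_i x_i^* = 0$ restricts on $Y$ to $(\sum \alpha_i)y^* = 0$, forcing $\sum \alpha_i = 0$ since $\|y^*\|=1$, and substituting back with affine independence forces every $\alpha_i = 0$. Since $HB(y^*)$ is convex, $\frac{1}{k+1}\sum_{i=0}^k x_i^* \in HB(y^*)$ also has norm $1$, so $\|\sum x_i^*\| = k+1$ with $k+1$ linearly independent norm-one summands, violating $k$-strict convexity.

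The reverse direction of (b) is where the main obstacle lies. Starting from linearly independent $x_1^*, \ldots, x_{k+1}^* \in S_{X^*}$ with $\|\sum x_i^*\| = k+1$, I would define $Y_0 = \bigcap_{i=2}^{k+1} \ker(x_i^* - x_1^*)$, so that all $x_i^*$ agree on $Y_0$ with a common restriction $y^* := x_1^*|_{Y_0}$. The critical step is to show $\|y^*\|_{Y_0^*} = 1$, for only then do the $x_i^*$ actually belong to $HB(y^*)$. Via the isometry $Y_0^* \cong X^*/Y_0^\perp$ together with the finite-codimensional identification $Y_0^\perp = \mathrm{span}\{x_i^* - x_1^* : i \geq 2\}$, this reduces to the inequality $\|\sum_{i=1}^{k+1} \beta_i x_i^*\| \geq 1$ whenever $\sum \beta_i = 1$. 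The trick is to apply Hahn--Banach in $X^{**}$ to $\frac{1}{k+1}\sum x_i^* \in S_{X^*}$: this yields $y^{**} \in B_{X^{**}}$ with $y^{**}\bigl(\frac{1}{k+1}\sum x_i^*\bigr) = 1$, and since $y^{**}(x_i^*) \leq 1$ for each $i$ while their mean is $1$, one must have $y^{**}(x_i^*) = 1$ for every $i$; then $y^{**}(\sum \beta_i x_i^*) = \sum \beta_i = 1$ gives the desired bound. Once $\|y^*\| = 1$ is secured, $x_1^*, \ldots, x_{k+1}^*$ lie in $HB(y^*)$ and are linearly independent, so $\dim HB(y^*) \geq k$, showing $Y_0$ fails property-$(k-U)$.
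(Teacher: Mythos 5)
Your proposal is correct and complete. Note, however, that the paper does not prove Theorem~\ref{T11} at all --- it is quoted from Xu's paper \cite{X} --- so there is no in-paper argument to compare against; I can only assess your proof on its own terms. Part (a) is the standard translation via $Y^*\cong X^*/Y^\perp$: the affine bijection $z\mapsto x^*-z$ between $P_{Y^\perp}(x^*)$ and $HB(x^*|_Y)$, automatic proximinality of $Y^\perp$, and the scaling identity together give exactly the stated equivalence. In (b), the forward direction correctly upgrades affine independence in $HB(y^*)$ to linear independence by restricting a dependence relation to $Y$ to force the coefficients to sum to zero, and then uses convexity of $HB(y^*)$ to contradict $k$-strict convexity. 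The reverse direction is the only place where a genuine idea is needed, and you supply it: for $Y_0=\bigcap_{i\ge 2}\ker(x_i^*-x_1^*)$ one has $Y_0^\perp=\mathrm{span}\{x_i^*-x_1^*\}$, so $\|x_1^*|_{Y_0}\|=\inf\{\|\sum\beta_i x_i^*\|:\sum\beta_i=1\}$, and the norming functional $y^{**}\in S_{X^{**}}$ for $\frac{1}{k+1}\sum x_i^*$ must satisfy $y^{**}(x_i^*)=1$ for every $i$ (each term is at most $1$ and the average equals $1$), which pins this infimum at $1$ and places all $x_i^*$ in $HB(x_1^*|_{Y_0})$. This is a clean, self-contained proof of the cited result.
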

 
  In this case, (property-$(k-U)$), the corresponding notion of smoothness is multismooth, as discussed in \cite{LR}.
 It is clear from the definition that an $n$ co-dimensional subspace $Y$ has property-$((n+1)-U)$; consequently, the James space has property-$(k-U)$ in its bi-dual. By contrast, no subspace of $\ell_\infty/c_0$ (and similarly for $B(\ell_2)/K(\ell_2)$) has property-$(k-U)$ in their respective superspaces; {\it a fortiori}, if $u$ is unitary in a $C^*$-algebra $\mc{A}$, then $\textrm{span}(S(\mc{A}^*,u))=\mc{A}^*$.

\subsection{Major outcomes of this work } We organize this work in the following manner.

In Section~$2$, we discuss property-$(wU)$ in Banach spaces. In Theorem~ \ref{T16}, we provide new characterizations of property-$(wU)$ in Banach space. The $3$-space problem for property-$(wU)$ is partially proven in this section.

Section~3 discusses the stability properties of various types of uniqueness of the Hahn--Banach extensions stated earlier, namely property-$(SU), (HB)$. Some new characterizations are given for these properties along with property-$(U)$ in Banach spaces in Theorems~\ref{T4}, \ref{T6}, and \ref{T1}. We partially solve the 3-space problem for property-$(HB)$.
Some examples are given at the end, establishing that property-$(wU)$($(SU)$) is strictly weaker than property-$(U)$($(HB)$).

In Section~4, we discuss property-$(k-U)$ in Banach spaces. In Theorem~\ref{T5}, it is characterized when a reflexive subspace $Y$ has property-$(k-U)$ in a Banach space $X$. In Theorem~\ref{T14}, we discuss a class of finite-dimensional subspaces of $L_1(\mu)$ that does not have property-$(k-U)$ for any $k$.
Some examples are also given with this property. 

\section{On weakly Hahn--Banach smoothness}
In this section, we study property-$(wU)$. We begin with some characterizations of property-$(wU)$. Let us recall that for $x^*\in X^*$, a hyperplane $(x^*)^{-1}\left(0\right)$ is proximinal if and only if $x^*$ attains its norm in the unit sphere of $X$.

\subsection{Characterizations}
\begin{thm}\label{T16}
	Let $Y$ be a subspace of the Banach space $X$. Then, the following are equivalent.
	\bla
	\item $Y$ has property-$(wU)$ in $X$.
	\item Every proximinal subspace $Z\ci Y$, $Y/Z$ has property-$(wU)$ in $X/Z$.
	\item $P_{Y^\perp}(x^*)=\{0\}$ for $x^*\in X^*$ satisfying $x^*(y)=\|x^*\|$, for some $y\in S_Y$.
	\el
\end{thm}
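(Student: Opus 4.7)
The plan is to reduce everything to the basic correspondence between norm-preserving extensions of a functional $y^* \in Y^*$ and best approximants of an arbitrary extension $x^*$ in $Y^\perp$. The two equivalences \emph{(a)} $\Leftrightarrow$ \emph{(c)} and \emph{(a)} $\Leftrightarrow$ \emph{(b)} have different flavors, so I would handle them separately.

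For \emph{(a)} $\Leftrightarrow$ \emph{(c)}, I would first note that the hypothesis ``$x^*(y)=\|x^*\|$ for some $y\in S_Y$'' forces $\|x^*\|=\|x^*|_Y\|$, so that $x^*$ is a norm-preserving extension of $y^*:=x^*|_Y$, with $y^*$ attaining its norm at $y$. The key identity is $d(x^*,Y^\perp)=\|x^*|_Y\|$ for every $x^*\in X^*$, because $x^*-z^*$ extends $x^*|_Y$ for every $z^*\in Y^\perp$, hence has norm at least $\|x^*|_Y\|$, while $x^*$ itself is a candidate. In particular, whenever $x^*\in Y^\#$ one has $0\in P_{Y^\perp}(x^*)$, and the elements of $P_{Y^\perp}(x^*)$ are exactly the differences $x^*-\tilde x^*$ as $\tilde x^*$ runs over norm-preserving extensions of $y^*$. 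Then \emph{(a)} forces every such difference to vanish, which is \emph{(c)}; conversely, \emph{(c)} applied to any norm-preserving extension $x_1^*$ of a norm-attaining $y^*$ shows that $x_1^*-x_2^*\in P_{Y^\perp}(x_1^*)=\{0\}$ for any competing norm-preserving extension $x_2^*$.

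For \emph{(a)} $\Leftrightarrow$ \emph{(b)}, the direction \emph{(b)} $\Rightarrow$ \emph{(a)} is immediate by taking $Z=\{0\}$. For the converse, I would use the canonical identifications $(X/Z)^*\cong Z^\perp\subseteq X^*$ and $(Y/Z)^*\cong\{y^*\in Y^*:y^*|_Z=0\}$ coming from quotient duality, under which restriction of functionals $(X/Z)^*\to(Y/Z)^*$ becomes the ordinary restriction $X^*\to Y^*$. Given $\bar y^*\in (Y/Z)^*$ attaining its norm at some $\bar y_0\in S_{Y/Z}$, proximinality of $Z$ in $Y$, which is free because $d(y,Z)$ is computed over the same set $Z$ regardless of the ambient space, provides a representative $y_0\in Y$ with $\|y_0\|=\|\bar y_0\|=1$. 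Writing $y^*\in Y^*$ for the functional corresponding to $\bar y^*$, we have $y^*(y_0)=\|y^*\|$, and \emph{(a)} supplies a unique norm-preserving extension $x^*\in X^*$. Since $x^*|_Z=y^*|_Z=0$, this $x^*$ lies in $Z^\perp$ and descends to a norm-preserving extension $\bar x^*$ of $\bar y^*$ in $(X/Z)^*$; uniqueness of $x^*$ at the level of $X$ gives uniqueness of $\bar x^*$.

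The main obstacle I anticipate is the bookkeeping around the quotient identifications: one needs to confirm that proximinality of $Z$ in the ambient space really does transfer to proximinality of $Z$ in $Y$ (so that norm-attaining vectors in $S_{Y/Z}$ lift to unit vectors of $Y$), and that the inclusion $(Y/Z)^*\hookrightarrow (X/Z)^*$ matches, under the above identifications, the restriction map $Z^\perp\to\{y^*\in Y^*:y^*|_Z=0\}$ isometrically. Once these identifications are made transparent, the proof becomes a direct translation of definitions using the correspondence established in the proof of \emph{(a)} $\Leftrightarrow$ \emph{(c)}.
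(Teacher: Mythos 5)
Your proposal is correct and follows essentially the same route as the paper: the same correspondence between $HB(x^*|_Y)$ and $P_{Y^\perp}(x^*)$ (via $d(x^*,Y^\perp)=\|x^*|_Y\|$) for $(a)\Leftrightarrow(c)$, and the same quotient-duality identifications $(X/Z)^*\cong Z^\perp$, $(Y/Z)^*\cong Z^\perp_{Y^*}$ together with proximinality of $Z$ in $Y$ to lift norm-attaining cosets for $(a)\Rightarrow(b)$. The only difference is that you obtain $(b)\Rightarrow(a)$ for free by taking $Z=\{0\}$, whereas the paper argues the contrapositive with $Z=\ker(y^*)$; both are valid.
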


\begin{proof}
	$(a)\Ra (b)$. Suppose $Y$ has property-$(wU)$ in $X$, and $Z$ is proximinal in $Y$. Let $F\in (Y/Z)^*$ be a norm attaining functional. That is, there exists $y+Z\in Y/Z, \|y+Z\|=1$ such that $F(y+Z)=\|F\|$. Considering $F$ as an element in $Z^\perp_{Y^*}$, $y^*$, say, is clearly norm-attaining at some $y+z\in S_X$, for some $z\in Z$. Now, $y^*$ has a unique norm-preserving extension to $X$; let us call it $x^*$. Since $x^*\in Z^\perp_{X^*}$, we may identify $x^*$ to some $\widetilde{F}\in (X/Z)^*$ with $\|F\|=\|y^*\|=\|x^*\|=\|\widetilde{F}\|$. To prove the uniqueness of $\widetilde{F}$, let us assume $G\in (X/Z)^*$ such that $G|_{Y/Z}=F$. Clearly, $G\in Z^\perp_{X^*}$, which we call $w^*$. Then, $w^*|_Y=y^*$ and $\|w^*\|=\|y^*\|$. Hence, we have $w^*=x^*$, which proves the necessity of the condition.
	
	$(b)\Ra (a)$. Let us assume that $Y$ does not have property-$(wU)$ in $X$. Then, there exists $y^*\in S_{Y^*}$ such that $y^*$ attains its norm on $B_Y$ and distinct $x_1^*, x_2^*\in HB(y^*)$. Let $Z=\ker (y^*)$; then, $Z$ is proximinal in $Y$. Suppose $y_0\in B_Y$ is such that $y^*(y_0)=\|y^*\|$, then $\|y_0+Z\|=d(y_0,Z)=|y^*(y_0)|=1$. We can now identify $y^*, x_1^*, x_2^*$ in the corresponding quotient spaces, i.e., $Z^\perp_{Y^*}$ and $Z^\perp_{X^*}$, respectively. If we define $F= y^*$ and $G_1, G_2$ as $x_1^*, x_2^*$, respectively, then clearly it is a contradiction to the assumption that $Y/Z$ has property-$(wU)$ in $X/Z$. 
	
	$(a)\Ra (c).$ Let $0\neq y^\perp\in P_{Y^\perp}(x^*)$ for some $x^*\in X^*$, as stated in $(c)$. Then, $x^*, x^*-y^\perp\in HB(x^*|_Y)$. This contradicts the fact that $Y$ has property-$(wU)$ in $X$.
	
	$(c)\Ra (a).$ If $y^*\in S_{Y^*}$ such that $|y^*(y)|=1$, for some $y\in S_Y$, then, any $x^*\in HB(y^*)$, we have that $x^*(y)=1$. Hence, for distinct $x_1^*, x_2^*\in HB(y^*)$, we obtain $x_1^*-x_2^*\in P_{Y^\perp}(x_1^*)$, which contradicts our assumption on $(c)$.
\end{proof}

\subsection{A variant of the 3-space problem} We now focus on the main result of this section. Let us recall the characterization \cite[Theorem~2.2]{LA} for property-$(wU)$ of a subspace $Y$ in $X$.
\begin{thm} \label{TP3}
	Let $Y, Z$ be the subspaces of $X$ such that $Z\ci Y\ci X$. If $Z$ is a semi-M-ideal in $X$ and $Y/Z$ has property-$(wU)$ in $X/Z$, then $Y$ has property-$(wU)$ in $X$. 
\end{thm}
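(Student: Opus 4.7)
The plan is to verify property-$(wU)$ of $Y$ in $X$ via the characterization in Theorem~\ref{T16}(c): it suffices to show $P_{Y^\perp}(x^*)=\{0\}$ for every $x^*\in X^*$ attaining its norm at some $y_0\in S_Y$. Fix such an $x^*$ and an arbitrary $z^\perp\in P_{Y^\perp}(x^*)$, so that $\|x^*-z^\perp\|=d(x^*,Y^\perp)=\|x^*|_Y\|=\|x^*\|$; the goal is to conclude $z^\perp=0$. Using the semi-$M$-ideal structure of $Z$, write the unique decomposition $x^*=a+b$ with $a\in Z^\perp$, $b\in Z^\#$, and $\|x^*\|=\|a\|+\|b\|$. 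From $x^*(y_0)=\|x^*\|=\|a\|+\|b\|$ together with $|a(y_0)|\le\|a\|$ and $|b(y_0)|\le\|b\|$, the equality case of the triangle inequality forces $a(y_0)=\|a\|$ and $b(y_0)=\|b\|$. Identifying $a$ with an element $\tilde a\in(X/Z)^*$ via the isometric identification $Z^\perp\cong(X/Z)^*$, one checks that $\|y_0+Z\|_{X/Z}=1$ and $\tilde a(y_0+Z)=\|\tilde a\|$, so $\tilde a$ attains its norm at $y_0+Z\in S_{Y/Z}$ (assuming $\|a\|>0$).

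The next step is to rewrite $x^*-z^\perp=(a-z^\perp)+b$ with $a-z^\perp\in Z^\perp$ and $b\in Z^\#$; the $\ell_1$-additivity built into $\oplus_{\ell_1}$ yields $\|x^*-z^\perp\|=\|a-z^\perp\|+\|b\|$. Combining with $\|x^*-z^\perp\|=\|x^*\|=\|a\|+\|b\|$ reduces the problem to $\|a-z^\perp\|=\|a\|$, which is now a statement in $(X/Z)^*$ since $z^\perp\in Y^\perp\cap Z^\perp$ corresponds to an element of $(Y/Z)^\perp$ inside $(X/Z)^*$. Because $\tilde a$ attains its norm on $S_{Y/Z}$, one has $d(\tilde a,(Y/Z)^\perp)=\|\tilde a\|$, hence $z^\perp\in P_{(Y/Z)^\perp}(\tilde a)$. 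Invoking the hypothesis that $Y/Z$ has property-$(wU)$ in $X/Z$ through Theorem~\ref{T16}(c) forces $P_{(Y/Z)^\perp}(\tilde a)=\{0\}$, and thus $z^\perp=0$, as required.

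The degenerate case $\|a\|=0$ is immediate: then $x^*=b\in Z^\#$, and the $\ell_1$-additivity gives $\|x^*-z^\perp\|=\|{-}z^\perp+b\|=\|z^\perp\|+\|b\|=\|z^\perp\|+\|x^*\|$, forcing $\|z^\perp\|=0$. The subtle point on which the whole argument hinges is the $\ell_1$-additivity $\|j^\perp+j^\#\|=\|j^\perp\|+\|j^\#\|$ for \emph{every} pair $(j^\perp,j^\#)\in Z^\perp\times Z^\#$ (not only the canonical decomposition of a given element): this is the structural content extracted from the semi-$M$-ideal hypothesis, and it is precisely what permits transporting the norm-attainment data from $X$ down to the quotient $X/Z$ where the assumption on $Y/Z$ can be applied.
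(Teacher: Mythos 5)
Your proof is correct, and it takes a genuinely different route from the paper's. The paper argues in the primal space: it verifies Lima's ball-intersection criterion for property-$(wU)$ (\cite[Theorem~2.2]{LA}), lifting an approximate point of $Y/Z\cap B(\cdot)\cap B(\cdot)$ back to $Y$ and then correcting it inside $Z$ via the $2$-ball property of the semi-$M$-ideal; this costs some careful bookkeeping with the constants $p,q,r,r',\eta$. You instead work entirely in the dual, combining the metric-projection reformulation of property-$(wU)$ from Theorem~\ref{T16}(c) with the decomposition $X^*=Z^\perp\oplus_{\ell_1}Z^\#$: the norm-attainment $x^*(y_0)=\|x^*\|$ splits along the decomposition to give norm-attainment of the $Z^\perp$-component on $S_{Y/Z}$, the best-approximation problem for $Y^\perp$ descends isometrically to $(X/Z)^*\cong Z^\perp$ (using $Y^\perp\subseteq Z^\perp$ and the invariance of the $Z^\#$-component), and the hypothesis on $Y/Z$ kills the approximant. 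Your steps all check out, including the equality-case argument $a(y_0)=\|a\|$, $b(y_0)=\|b\|$ and the degenerate case $\|a\|=0$. You are right that everything hinges on $\ell_1$-additivity for \emph{arbitrary} pairs in $Z^\perp\times Z^\#$, not just canonical decompositions; this does hold for semi-$M$-ideals — it follows from the quasi-additivity $P(x+Py)=Px+Py$ of the (possibly nonlinear) contractive projection onto $Z^\perp$, which gives $P(j^\perp+j^\#)=j^\perp$ for $j^\perp\in Z^\perp$, $j^\#\in Z^\#$ (see \cite{L}) — but since the paper's definition of semi-$M$-ideal is stated loosely, you should cite this explicitly rather than treat it as immediate. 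The trade-off: your argument is shorter and avoids the $\varepsilon$-management, but it leans on the dual $\ell_1$-structure and Theorem~\ref{T16}(c), whereas the paper's proof uses only the intrinsic $2$-ball property of $Z$ together with Lima's primal characterization.
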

\begin{proof}
	Let us choose $x_0\in S_X$, $y_0\in S_Y$ and $\e>0$. 
	
	{\sc Claim:~} $\exists~R\geq 1$ such that $Y\cap B(x_0+R y_0, R+\e)\cap B(x_0-R y_0, R+\e)\neq\es.$
	
	Now, if $x_0\in Z$, then the aforementioned intersection holds true for any $R>0$ and clearly contains $x_0$. Moreover, if $y_0\in Z$, then the aforementioned intersection is non-empty for $R=1$ (see \cite[Pg.98]{LA}). Hence, without loss of generality, we assume that $x_0, y_0\notin Z$.
	
	Define $p=\frac{1}{\|x_0+Z\|}$, $q=\frac{1}{\|y_0+Z\|}$ and choose $\eta>0$ such that $\eta<\frac{\e p}{q+2p}$. 
	
	From \cite[Theorem 2.2]{LA},  there exists $r\geq 1$ such that,
	\[
	Y/Z\cap B(px_0+Z+rqy_0+Z,r+\eta)\cap B(px_0+Z-rqy_0+Z,r+\eta)\neq \phi.
	\] 
	That is,
	\[
	Y/Z\cap B(px_0+rqy_0+Z,r+\eta)\cap B(px_0-rqy_0+Z,r+\eta)\neq \phi.
	\]
	Let $r'>\max\{\frac{p}{q},r\}$. Since $r'>r$, we have 
	$$Y/Z\cap B(px_0+r'qy_0+Z,r'+\eta)\cap B(px_0-r'qy_0+Z,r'+\eta)\neq \phi.$$ 
	
	Hence, $Y/Z\cap B(x_0+\frac{r'q}{p}y_0+Z,\frac{r'}{p}+\frac{\eta}{p})\cap B(x_0-\frac{r'q}{p}y_0+Z,\frac{r'}{p}+\frac{\eta}{p})\neq \phi$. 
	
	As $q\geq 1$, we have
	
	 $Y/Z\cap B(x_0+\frac{r'q}{p}y_0+Z,\frac{qr'}{p}+\frac{q\eta}{p})\cap B(x_0-\frac{r'q}{p}y_0+Z,\frac{qr'}{p}+\frac{q\eta}{p})\neq \phi$. 
	
	Let $y+Z\in Y/Z\cap B(x_0+\frac{r'q}{p}y_0+Z,\frac{qr'}{p}+\frac{q\eta}{p})\cap B(x_0-\frac{r'q}{p}y_0+Z,\frac{qr'}{p}+\frac{q\eta}{p})$, i.e., $\|x_0\pm \frac{r'q}{p}y_0-y+Z\|\leq \frac{qr'}{p}+\frac{q\eta}{p}$. Let $z_i\in Z$, $i=1,2,$ such that $\|x_0\pm \frac{r'q}{p}y_0-y+z_i\|\leq \frac{qr'}{p}+\frac{q\eta}{p}+\eta$. 
	
	Consider the balls $B_1=B(x_0+\frac{r'q}{p}y_0-y,\frac{qr'}{p}+\frac{q\eta}{p}+\eta)$ and
	
    $B_2=B(x_0-\frac{r'q}{p}y_0-y,\frac{qr'}{p}+\frac{q\eta}{p}+\eta)$.
	
	Because $B_1\cap B_2\neq \phi$ and $B_i\cap Z\neq \phi$ $(i=1, 2)$, from the $2$-ball property of $Z$, we obtain
	\[
	 \exists z\in Z\cap B\left(x_0+\frac{r'q}{p}y_0-y,\frac{qr'}{p}+\frac{q\eta}{p}+2\eta\right)\cap B\left(x_0-\frac{r'q}{p}y_0-y,\frac{qr'}{p}+\frac{q\eta}{p}+2\eta\right).
	\]
	Thus, $y+z\in Y\cap B(x_0+\frac{r'q}{p}y_0,\frac{qr'}{p}+\frac{q\eta}{p}+2\eta)\cap B(x_0-\frac{r'q}{p}y_0,\frac{qr'}{p}+\frac{q\eta}{p}+2\eta)$. As $\frac{r'q}{p}>1$ and $\frac{q\eta}{p}+2\eta<\e$, the {\sc claim} follows for $R\geq \frac{r'q}{p}$. 
	
	This completes the proof.
\end{proof}

We now discuss the cases of vector-valued functions. A contra-positive argument leads to the following hypothesis. Let $(\Omega, \Sigma, \mu)$ be a probability space and $Y$ be a subspace of $X$ such that $X^*$ has the Radon--Nikod$\acute{y}$m property (see \cite[pg. 61]{DU} for the definition). We denote this property by RNP in the remainder of this paper. 

\begin{thm}\label{T8}
If $L_p(\mu, Y)$ has property-$(wU)$ in $L_p(\mu, X)$, then $Y$ has property-$(wU)$ in $X$, where $1< p<\iy$.
\end{thm}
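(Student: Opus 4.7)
The plan is to prove the contrapositive. Suppose that $Y$ fails property-$(wU)$ in $X$. Then there exist $y^*\in S_{Y^*}$ and $y_0\in S_Y$ with $y^*(y_0)=1$, together with two distinct norm-preserving extensions $x_1^*,x_2^*\in S_{X^*}$ of $y^*$. I would build from these data a norm-attaining functional on $L_p(\mu,Y)$ admitting two distinct norm-preserving extensions to $L_p(\mu,X)$, which directly contradicts property-$(wU)$ of $L_p(\mu,Y)$ in $L_p(\mu,X)$.

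The key tool is the vector-valued duality
\[
L_p(\mu,X)^*\cong L_q(\mu,X^*),\qquad \tfrac{1}{p}+\tfrac{1}{q}=1,
\]
which is valid precisely because $X^*$ has the RNP and $1<p<\iy$ (see \cite[Chapter IV]{DU}). Under this identification I would take $\Phi_i\in L_q(\mu,X^*)$ to be the constant function $\Phi_i(\omega)\equiv x_i^*$ for $i=1,2$. Since $\mu$ is a probability measure, $\|\Phi_i\|_q=\|x_i^*\|=1$. In parallel, let $\Psi\in L_p(\mu,Y)^*$ be the integration functional $\Psi(f)=\int_\Omega y^*(f(\omega))\,d\mu(\omega)$, so that $\|\Psi\|\leq\|y^*\|=1$.

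Three verifications then finish the argument. First, the constant function $g\equiv y_0$ satisfies $g\in S_{L_p(\mu,Y)}$ and $\Psi(g)=y^*(y_0)=1$, so $\|\Psi\|=1$ and $\Psi$ attains its norm at $g$. Second, for every $f\in L_p(\mu,Y)\ci L_p(\mu,X)$ one has
\[
\Phi_i(f)=\int_\Omega x_i^*(f(\omega))\,d\mu(\omega)=\int_\Omega y^*(f(\omega))\,d\mu(\omega)=\Psi(f),
\]
so both $\Phi_1$ and $\Phi_2$ are norm-preserving extensions of $\Psi$. Third, $\Phi_1\neq\Phi_2$: choosing $x\in X$ with $x_1^*(x)\neq x_2^*(x)$ and evaluating both $\Phi_i$ on the constant $L_p$-function $\omega\mapsto x$ immediately separates them.

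The only substantive obstacle is the vector-valued duality above; everything else is a routine check. This is exactly where the RNP hypothesis on $X^*$ is used and cannot be dispensed with, since without it the dual of $L_p(\mu,X)$ is strictly larger than $L_q(\mu,X^*)$ and constant-function extensions of the above form need not realize all norm-preserving extensions. Once the duality is granted, the proof hinges on the convenient fact that for a probability measure the constant $X$-valued function has the same $L_p$-norm as its value, so both the norm-attainment witness $y_0$ and the non-uniqueness of extensions $x_1^*,x_2^*$ transfer transparently into the vector-valued setting.
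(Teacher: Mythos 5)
Your proposal is correct and is precisely the contrapositive argument the paper alludes to (the paper states the theorem without writing out the details, saying only that ``a contra-positive argument'' yields it): lift the norm-attaining $y^*$ and its two extensions $x_1^*,x_2^*$ to constant functions, using that $\mu$ is a probability measure so that norms are preserved. One small quibble with your closing remark: the RNP hypothesis is not actually essential for \emph{this} direction, since you only need the canonical map $L_q(\mu,X^*)\ra L_p(\mu,X)^*$ to send the constant function $\Phi_i\equiv x_i^*$ to a functional of norm $\leq 1$ (which is just H\"older's inequality), not that this map is onto.
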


Furthermore, we have the following.

\begin{thm}\label{T7}
	Let $Y$ be a subspace of $X$, and let $X^*$ has the RNP. If $C(K,Y)$  has property-$(wU)$  in $C(K,X)$, then $Y$ has property-$(wU)$ in $X$.
\end{thm}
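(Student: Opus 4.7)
My plan is to argue by contrapositive and to transfer a failure of property-$(wU)$ from $Y\subseteq X$ to $C(K,Y)\subseteq C(K,X)$ via point evaluation at a single point of $K$. First I would assume that $Y$ fails property-$(wU)$ in $X$, so that there exist $y^*\in S_{Y^*}$ attaining its norm at some $y_0\in S_Y$, together with distinct $x_1^*,x_2^*\in HB(y^*)$.

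Next I would fix a point $k_0\in K$ and introduce the point-evaluation functionals
\[
F(f)=y^*(f(k_0)),\quad f\in C(K,Y); \qquad G_i(g)=x_i^*(g(k_0)),\quad g\in C(K,X),\ i=1,2.
\]
Then I would verify three facts. First, $\|F\|=\|y^*\|=1$ and $F$ attains this norm on $S_{C(K,Y)}$ at the constant function $k\mapsto y_0$, whose sup-norm is $\|y_0\|=1$. Second, each $G_i$ restricts to $F$ on $C(K,Y)\subseteq C(K,X)$ and has norm $\|x_i^*\|=\|y^*\|$, so $G_i\in HB(F)$. Third, $G_1\neq G_2$: since $x_1^*\neq x_2^*$, there is some $x\in X$ with $x_1^*(x)\neq x_2^*(x)$, and evaluating $G_1$ and $G_2$ on the constant function $k\mapsto x$ separates them. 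These three facts together contradict property-$(wU)$ of $C(K,Y)$ in $C(K,X)$, establishing the contrapositive.

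The only delicate point is the norm bookkeeping: the upper bounds $\|F\|\leq \|y^*\|$ and $\|G_i\|\leq \|x_i^*\|$ are immediate from $\|f(k_0)\|\leq \|f\|_\infty$, while the matching lower bounds come from testing on constant functions. Worth noting is that the RNP hypothesis on $X^*$ is not actually used in this concrete transfer argument, which works for any compact Hausdorff $K$. The hypothesis becomes natural if one instead recasts the proof through the Dinculeanu--Singer identification $C(K,X)^*\cong M(K,X^*)$, in parallel with the role of $L_p(\mu,X)^*\cong L_q(\mu,X^*)$ in Theorem~\ref{T8}; there RNP-type assumptions are what allow one to work with the extending measures. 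I expect the author's proof either to follow the direct route sketched above or to repackage the same idea in vector-measure language.
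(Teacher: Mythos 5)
Your proof is correct and follows exactly the route the paper intends: the paper offers no written proof of Theorem~\ref{T7} beyond the remark that ``a contra-positive argument leads to the following,'' and your transfer of a non-unique extension via evaluation at a single point $k_0$ together with constant functions is precisely that argument. Your side observation that the RNP hypothesis on $X^*$ plays no role in this direction is also accurate.
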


We do not know the validity of the converse of Theorem~\ref{T8}, \ref{T7}.

\section{On property-$(SU)$ and property-$(HB)$}
\subsection{Characterizations} We begin this section with the following characterizations of property-$(U)$, $(SU)$, and $(HB)$.

\begin{thm}\label{T4}
	Let $Y$ be a subspace of $X$; then, the following are equivalent:
	\bla
	\item $Y$ has property-$(U)$ in $X$.
	\item For every subspace $Z (\neq 0)$ of $Y$, $Y/Z$ has property-$(U)$ in $X/Z$.
	\item For every hyperplane $Z$ in $Y$, $Y/Z$ has property-$(U)$ in $X/Z$.
	\el
\end{thm}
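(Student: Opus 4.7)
The plan is to mirror the structure of the proof of Theorem~\ref{T16}, but drop the proximinality hypothesis on $Z$: property-$(wU)$ only constrained norm-attaining functionals and therefore needed $Z$ proximinal to push the norm-attaining property to the quotient, whereas property-$(U)$ makes an unconditional statement about every $y^*\in Y^*$, so no such restriction on $Z$ is required. The key tool throughout is the standard isometric identification $(W/Z)^*\cong Z^\perp_{W^*}$ for $Z\ci W$, under which the quotient norm on $(W/Z)^*$ corresponds to the dual norm on $Z^\perp_{W^*}$, applied both to $W=Y$ and $W=X$.

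For $(a)\Ra(b)$, I would start with an arbitrary $F\in(Y/Z)^*$, lift it through the quotient map $\pi_Y:Y\ra Y/Z$ to obtain $y^*:=F\circ\pi_Y\in Z^\perp_{Y^*}$ with $\|y^*\|=\|F\|$, and invoke property-$(U)$ of $Y$ in $X$ to produce a unique norm-preserving extension $x^*\in X^*$. Since $x^*|_Z=y^*|_Z=0$, the functional $x^*$ descends through $X/Z$ to give $\widetilde F\in(X/Z)^*$ extending $F$ with $\|\widetilde F\|=\|x^*\|=\|F\|$. For uniqueness, any other norm-preserving extension $G\in(X/Z)^*$ of $F$ would lift back to some $w^*\in X^*$ with $w^*|_Y=y^*$ and $\|w^*\|=\|y^*\|$; then $w^*\in HB(y^*)$ forces $w^*=x^*$ and hence $G=\widetilde F$. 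The implication $(b)\Ra(c)$ is immediate, as a hyperplane of $Y$ is in particular a (nonzero) subspace of $Y$.

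For $(c)\Ra(a)$ I would argue by contraposition. Assume $Y$ fails property-$(U)$ in $X$, so there is $y^*\in S_{Y^*}$ together with two distinct $x_1^*,x_2^*\in HB(y^*)$. Set $Z:=\ker y^*$, which is a hyperplane in $Y$. Because $x_i^*|_Y=y^*$ vanishes on $Z$, each $x_i^*$ lies in $Z^\perp_{X^*}$ and so corresponds to $G_i\in(X/Z)^*$; the $G_i$ are distinct (since $x_1^*\neq x_2^*$ on some vector of $X$, which survives in the quotient), both extend the image $F\in(Y/Z)^*$ of $y^*$, and both have norm $\|y^*\|=\|F\|$. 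This contradicts property-$(U)$ of $Y/Z$ in $X/Z$.

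I do not expect any serious obstacle: the only care needed is the bookkeeping of the two isometric identifications $(Y/Z)^*\cong Z^\perp_{Y^*}$ and $(X/Z)^*\cong Z^\perp_{X^*}$, to make sure that norm-preserving extensions on one side translate to norm-preserving extensions on the other. A minor edge case is $\dim Y=1$, where the only hyperplane of $Y$ is $\{0\}$ and $(c)$ reduces trivially to $(a)$; this poses no difficulty.
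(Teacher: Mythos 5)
Your proposal is correct and takes essentially the same route as the paper: the implication $(c)\Rightarrow(a)$ is proved by exactly the contrapositive argument you give, with $Z=\ker y^*$ and the two distinct extensions of $y^*$ surviving in $(X/Z)^*$, while for $(a)\Rightarrow(b)$ the paper simply cites \cite[Theorem~4.1(a)]{DPR}, whose content is the quotient-lifting argument you write out (the same mechanism as in Theorem~\ref{T16}, with proximinality dropped because property-$(U)$ imposes no norm-attainment restriction). There are no substantive differences.
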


\begin{proof}
	From \cite[Theorem~4.1$(a)$]{DPR}, we have that $(a)\Ra (b)$ and $(b)\Ra (c)$ are obvious. We only need to show that $(c)\Ra (a)$.
	
	Let us assume that $Y$ does not have property-$(U)$ in $X$. Hence, there exists $y^*\in Y^*$ and distinct $x_1^*, x_2^*\in HB(y^*)$. Then, there exists $z\in X$ such that $x_1^*(z)\neq x_2^*(z)$. Define $Z=\ker (y^*)$, then $Z\ci Y$ is a hyperplane in $Y$ and $(Y/Z)^*=\textrm{span} \{y^*\}$. The spanning vector $y^*$ possesses two distinct norm-preserving extensions over $(X/Z)$. Hence, the result follows.
\end{proof}

\begin{rem}
	\bla
\item Note that the condition $(c)$ in Theorem~\ref{T4} can be restated as follows: for a hyperplane $Z$ in $Y$ and $y\in Y\sm Z$, $y+Z$ is a smooth point of $X/Z$. 
\item From the previous remark and Theorem~\ref{T4} it follows that if $c_0\ci Y\ci \ell_\infty$ where $\dim (Y/c_0)=1$ then $Y$ can not have property-$(U)$ in $\ell_\infty$.
\el
\end{rem}

Our next theorem characterizes property-$(SU)$ in Banach spaces. 

\begin{thm}\label{T6}
	Let $Y$ be a subspace of $X$: then, the following are equivalent:
	\bla
	\item $Y$ has property-$(SU)$ in $X$.
	\item $Y$ has property-$(U)$ in $X$ and $Y$ is an ideal in $X$.
	\item $Y^\perp$ is Chebyshev in $X^*$, and the metric projection $P_{Y^\perp}:X^*\ra Y^\perp$ is a linear projection.
	\el
\end{thm}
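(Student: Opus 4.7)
The plan is a cyclic proof $(a) \Ra (b) \Ra (c) \Ra (a)$, combining the Phelps-type fact (property-$(U)$ $\Lra$ $Y^\perp$ Chebyshev, noted in the introduction) with the ideal structure to pass between extension uniqueness and Chebyshev geometry.

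For $(a) \Ra (b)$, I fix the contractive projection $P\colon X^* \to Y^\perp$ and subspace $G = (I-P)(X^*)$ provided by property-$(SU)$. To derive property-$(U)$, take any $y^* \in Y^*$ and Hahn--Banach extensions $x_1^*, x_2^*$; decompose $x_i^* = y_i^\# + y_i^\perp$ with $y_i^\# \in G$, $y_i^\perp \in Y^\perp$. Restricting to $Y$ forces $y_1^\# - y_2^\# \in G \cap Y^\perp = \{0\}$, whence $y_1^\# = y_2^\#$; property-$(SU)$ then forces $y_i^\perp = 0$ (else $\|x_i^*\| > \|y_i^\#\| \geq \|y^*\| = \|x_i^*\|$), yielding $x_1^* = x_2^*$. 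For the ideal conclusion, $I - P$ is a projection with $\ker(I - P) = P(X^*) = Y^\perp$, and the weak version of the $(SU)$-inequality (trivial equality when $y^\perp = 0$) gives $\|(I-P)(x^*)\| \leq \|x^*\|$ for all $x^*$, so $\|I - P\| \leq 1$.

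For $(b) \Ra (c)$, property-$(U)$ yields $Y^\perp$ Chebyshev by the cited Phelps-type result. Since $Y$ is an ideal, restriction has a linear isometric right-inverse $L\colon Y^* \to X^*$; I identify $P_{Y^\perp}(x^*) = x^* - L(x^*|_Y)$. Indeed $L(x^*|_Y)$ lies in the coset $x^* + Y^\perp$ with norm $\|x^*|_Y\| = d(x^*, Y^\perp)$, so $x^* - L(x^*|_Y) \in Y^\perp$ realizes the distance, and by Chebyshev uniqueness must equal $P_{Y^\perp}(x^*)$. Linearity of $L$ transfers to linearity of $P_{Y^\perp}$.

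Finally, $(c) \Ra (a)$: set $P = P_{Y^\perp}$ and $G = (I-P)(X^*)$. The strict inequality in property-$(SU)$ is immediate from Chebyshev uniqueness---if $P(x^*) \neq 0$, then $0 \in Y^\perp$ is strictly worse than $P(x^*)$ as approximation, giving $\|x^*\| > \|x^* - P(x^*)\| = \|(I-P)(x^*)\|$. The main obstacle is contractivity $\|P\| = 1$. My plan is to identify $I - P$ with $L \circ R$, where $R\colon X^* \to Y^*$ is restriction and $L$ is the linear isometric Hahn--Banach extension extracted from the best-approximation structure, so that $\|(I - P)(x^*)\| = \|L(x^*|_Y)\| = \|x^*|_Y\| \leq \|x^*\|$ and hence $\|I - P\| \leq 1$. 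Extracting $\|P\| \leq 1$ then requires combining Chebyshev uniqueness with the decomposition $X^* = G \oplus Y^\perp$ to rule out $\|P(x^*)\| > \|x^*\|$; this interplay between linearity of the metric projection and the uniqueness forcing a contraction is the delicate step that ties the abstract projection back to the metric structure.
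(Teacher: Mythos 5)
Your implications $(a)\Ra(b)$ and $(b)\Ra(c)$ are correct and in fact more self-contained than the paper's treatment: the paper disposes of $(a)\Lra(b)$ by citing Oja and obtains linearity of $P_{Y^\perp}$ from the characterization of property-$(SU)$ as ``$Y^\#$ is a linear subspace,'' whereas you derive property-$(U)$ directly from the decomposition $X^*=G\oplus Y^\perp$ and identify $P_{Y^\perp}(x^*)=x^*-L(x^*|_Y)$ through the Hahn--Banach extension operator supplied by the ideal projection. Both of those arguments go through.

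The gap is in $(c)\Ra(a)$, and it is conceptual rather than technical: you are trying to establish $\|P_{Y^\perp}\|=1$, which is neither required for property-$(SU)$ nor true under hypothesis $(c)$. The substance of property-$(SU)$ is only the strict inequality $\|x^*\|>\|y^\#\|$ when $y^\perp\neq 0$, i.e.\ contractivity of $I-P$ (the projection onto $G$ along $Y^\perp$); the further requirement $\|x^*\|\geq\|y^\perp\|$, i.e.\ contractivity of $P$ itself, is exactly what upgrades $(SU)$ to $(HB)$ --- compare condition $(c)$ of Theorem~\ref{T1}, which adds ``of norm-$1$'' to the metric projection, with condition $(c)$ of the present theorem, which does not. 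Indeed, Theorem~\ref{T17} and Example~\ref{E2} exhibit subspaces with property-$(SU)$ whose metric projections onto $Y^\perp$ have norm strictly greater than $1$, so the ``delicate step'' you defer cannot be carried out. (The clause ``$\|P\|=1$'' in Definition~\ref{D1} must be read as contractivity of $I-P$; taken literally it would make $(SU)$ and $(HB)$ coincide and contradict Theorem~\ref{T17}.) What you actually need is only $\|(I-P_{Y^\perp})(x^*)\|\leq\|x^*\|$, and this is immediate from the metric structure, since $\|x^*-P_{Y^\perp}(x^*)\|=d(x^*,Y^\perp)\leq\|x^*-0\|=\|x^*\|$; it also follows from the strict inequality you already derived from Chebyshev uniqueness in the case $P_{Y^\perp}(x^*)\neq 0$. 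So your $(c)\Ra(a)$ is in fact complete once you drop the spurious goal; no extension operator has to be manufactured in this direction.
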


\begin{proof}
	 $(a)\Lglra (b)$ follows from \cite{O}. 
	 
	 $(a)\Ra (c)$. Let us recall from \cite{O} that, $(a)$ holds if and only if $Y^\#$ is a linear subspace.
	 Let $x_1^*,x_2^*\in X^*$. It is sufficient to show that $P_{Y^\perp}(x_1^*+x_2^*)=P_{Y^\perp}(x_1^*)+P_{Y^\perp}(x_2^*)$. Let $y_1^\perp,y_2^\perp\in Y^\perp$ such that $P_{Y^\perp}(x_1^*)=y_1^\perp$ and $P_{Y^\perp}(x_2^*)=y_2^\perp$. Consequently, for $i=1, 2$ $\|x_i^*-y_i^\perp\|=d(x_i^*,Y^\perp)=\|x_i^*|_Y\|=\|(x_i-y_i^\perp)|_Y\|$. Thus, $x_1^*-y_1^\perp,x_2^*-y_2^\perp\in Y^\#$ and hence by $(a)$, $x_1^*+x_2^*-y_1^\perp-y_2^\perp\in Y^\#$ and $\|x_1^*+x_2^*-y_1^\perp-y_2^\perp\|\leq \|x_1^*+x_2^*-y_1^\perp-y_2^\perp+y^\perp\|$ for all $y^\perp\in Y^\perp$. Hence, $\|x_1^*+x_2^*-y_1^\perp-y_2^\perp\|\leq d(x_1^*+x_2^*-y_1^\perp-y_2^\perp,Y^\perp)=d(x_1^*+x_2^*,Y^\perp)$. Therefore, $P_{Y^\perp}(x_1^*+x_2^*)=y_1^\perp+y_2^\perp=P_{Y^\perp}(x_1^*)+P_{Y^\perp}(x_2^*)$.
	 
	 To prove $(c)\Ra (b)$, we show that $Y^\perp$ is the kernel of a contraction. The proof of $\|I-P_{Y^\perp}\|=1$ remains. Let $x^*\in X^*$ and $x^*=y^\perp+z^*$. Then, $\|z^*\|=\|x^*-y^\perp\|=d(x^*,Y^\perp)\leq \|x^*\|$, and hence, the result follows.
\end{proof}

Focusing on property-$(HB)$, we have the following.

\begin{thm}\label{T1}
	Let $Y$ be a subspace of $X$. Then, the following are equivalent.
	\bla
	\item $Y$ has property-$(HB)$ in $X$.
	\item $Y$ has property-$(U)$ in $X$, and there exists a projection $P:X^*\ra Y^\perp$ such that $\|P\|=1=\|I-P\|$.
	\item $Y^\perp$ is Chebyshev in $X^*$, and the metric projection $P_{Y^\perp}:X^*\ra Y^\perp$ is a linear projection of norm-$1$.
	\el
\end{thm}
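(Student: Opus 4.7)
The approach is to bootstrap off Theorem~\ref{T6}, which already characterizes property-$(SU)$ in a perfectly parallel fashion. Property-$(HB)$ differs from property-$(SU)$ only by the additional clause $\|x^*\| \geq \|y^\perp\|$, and the plan is to show this extra clause corresponds precisely to bi-contractiveness $\|I-P\|=1$ in (b) and to $\|P_{Y^\perp}\|=1$ in (c). Once this is pinned down, the remaining content of the equivalences is inherited from Theorem~\ref{T6}.

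For (a)$\Lra$(b): since property-$(HB)$ strengthens property-$(SU)$, Theorem~\ref{T6} furnishes property-$(U)$ and a contractive projection $P:X^*\ra Y^\perp$ with $\|P\|=1$. To obtain $\|I-P\|=1$, note that for any $x^*=y^\#+y^\perp$ the property-$(HB)$ clause gives $\|y^\#\|\leq \|x^*\|$ (strict when $y^\perp\neq 0$, trivially an equality otherwise), and since $(I-P)(x^*)=y^\#$ and $(I-P)$ acts as the identity on $G$, we conclude $\|I-P\|=1$. Conversely, given (b), the hypothesis $\|P\|=1$ makes $Y$ an ideal, so Theorem~\ref{T6} combined with property-$(U)$ yields property-$(SU)$; this supplies the strict inequality $\|x^*\|>\|y^\#\|$ when $y^\perp\neq 0$, while $\|x^*\|\geq \|y^\perp\|$ is immediate from $y^\perp=P(x^*)$ and $\|P\|=1$.

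For (b)$\Lra$(c): Theorem~\ref{T6} already delivers the Chebyshev property of $Y^\perp$ and the linearity of $P_{Y^\perp}$ as soon as property-$(SU)$ is available. In the direction (b)$\Ra$(c) the crucial step is to identify $P$ with $P_{Y^\perp}$: for any $x^*\in X^*$ and $y^\perp\in Y^\perp$, the bound $\|I-P\|=1$ yields $\|x^*-y^\perp\|\geq \|(I-P)(x^*-y^\perp)\|=\|x^*-P(x^*)\|$, so $P(x^*)$ is a best approximation in $Y^\perp$; the Chebyshev uniqueness coming from property-$(U)$ then forces $P(x^*)=P_{Y^\perp}(x^*)$, giving $\|P_{Y^\perp}\|=\|P\|=1$. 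For (c)$\Ra$(b), Theorem~\ref{T6} gives property-$(SU)$ and hence property-$(U)$ directly, and taking $P=P_{Y^\perp}$ works because $\|(I-P_{Y^\perp})(x^*)\|=d(x^*,Y^\perp)\leq \|x^*\|$, producing the required bi-contractive projection.

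The main obstacle is the identification $P=P_{Y^\perp}$ in the implication (b)$\Ra$(c); this is the only place where the hypothesis $\|I-P\|=1$ is genuinely exploited, and it bridges the abstract projection supplied by (b) with the concrete metric projection of (c) via the Chebyshev uniqueness afforded by property-$(U)$. Everything else is a direct transcription of Theorem~\ref{T6} adjusted for the extra norm-one condition.
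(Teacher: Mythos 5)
Your overall architecture---bootstrapping off Theorem~\ref{T6} and then identifying the abstract projection with the metric projection---is sound, and your treatment of (b)$\Lra$(c) is correct and essentially what the paper does: the inequality $\|x^*-y^\perp\|\geq\|(I-P)(x^*-y^\perp)\|=\|x^*-Px^*\|$ exhibits $Px^*$ as a best approximant from $Y^\perp$, and Chebyshevness of $Y^\perp$ (equivalent to property-$(U)$ by Phelps \cite{P}) forces $P=P_{Y^\perp}$; the converse is the same one-line computation $\|(I-P_{Y^\perp})x^*\|=d(x^*,Y^\perp)\leq\|x^*\|$ that appears in the paper.

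In (a)$\Lra$(b), however, you have the roles of $P$ and $I-P$ crossed, and this is a genuine error, not a typo. Property-$(SU)$ (equivalently property-$(U)$ plus the ideal property, via Theorem~\ref{T6}) supplies a contractive projection with \emph{kernel} $Y^\perp$, i.e.\ it gives $\|I-P\|=1$ for the projection $P$ \emph{onto} $Y^\perp$; it does \emph{not} give $\|P\|=1$. That latter condition is exactly the surplus of property-$(HB)$ over property-$(SU)$: Theorem~\ref{T17} exhibits hyperplanes of $c_0$ with property-$(SU)$ whose linear metric projection onto $Y^\perp$ has norm greater than $1$. So the claim that Theorem~\ref{T6} ``furnishes a contractive projection $P:X^*\ra Y^\perp$ with $\|P\|=1$'' is false, and your derivation of $\|I-P\|=1$ from the clause $\|x^*\|>\|y^\#\|$ merely re-derives what $(SU)$ already gives; the clause you never invoke in this direction, $\|x^*\|\geq\|y^\perp\|$, is precisely what yields $\|P\|=1$, after which (a)$\Ra$(b) is immediate (as the paper notes). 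In the converse (b)$\Ra$(a), besides the same mislabelling (it is $\|I-P\|=1$, not $\|P\|=1$, that makes $Y$ an ideal), the step ``$\|x^*\|\geq\|y^\perp\|$ is immediate from $y^\perp=P(x^*)$'' tacitly assumes that the given $P$ has kernel $Y^\#$, i.e.\ that it implements the $(SU)$-decomposition. This needs the short argument the paper supplies: for $\widetilde{x^*}\in HB(x^*|_Y)$ one has $x^*-\widetilde{x^*}\in Y^\perp$, hence $(I-P)x^*=(I-P)\widetilde{x^*}$ and $\|(I-P)x^*\|\leq\|\widetilde{x^*}\|=\|x^*|_Y\|=\|(I-P)x^*|_Y\|$, so $(I-P)x^*\in HB(x^*|_Y)$ and property-$(U)$ identifies $\ker P$ with $Y^\#$. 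All of this is repairable with ingredients already on your table, but as written the equivalence (a)$\Lra$(b) does not go through.
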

\begin{proof}
	$(a)\Lglra (b)$. Clearly, we have $(a)\Ra (b)$. By contrast, it is sufficient to show that $\|(I-P)(x^*)\|<\|x^*\|$ whenever $(I-P)(x^*)\neq x^*$. Suppose that is not the case, then there exists $x_0^*\in X^*$ such that $(I-P)(x_0^*)\neq x_0^*$ and $\|(I-P)(x_0^*)\|=\|x_0^*\|$. Clearly, $\|x_0^*|_Y\|=\|(I-P)x_0^*|_Y\|\leq \|(I-P)x_0^*\|$. Let $\widetilde{x_0^*}$ be a norm-preserving extension of $x_0^*|_Y$. Then $x_0^*-\widetilde{x_0^*}\in Y^\perp$; hence, $(I-P)x_0^*=(I-P)\widetilde{x_0^*}$. Consequently, $\|(I-P)x_0^*\|=\|(I-P)\widetilde{x_0^*}\|\leq \|\widetilde{x_0^*}\|=\|x_0^*|_Y\|$. Thus $(I-P)(x_0^*),x_0^*\in HB(x_0^*|_Y)$, which is a contradiction.
	
	$(c)\Ra (b).$ If $(c)$ holds, then for any $x^*\in X^*$, there exists $y^\perp\in Y^\perp$ and $y^*\in X^*$ such that $x^*=y^\perp+y^*$. Now, we have $\|y^*\|=\|x^*-y^\perp\|=d(x^*,Y^\perp)\leq \|x^*\|$. Hence, both $P_{Y^\perp}$ and $I-P_{Y^\perp}$ are of norm-$1$ and $(b)$ follows. 
	
	$(a)\Ra (c).$ Clearly, $Y$ has property-$(U)$ in $X$. Now, if $(a)$ holds, then $X^*=Y^\perp\bigoplus Y^\#$. Now for every $x^*\in X^*$, there exists a unique $y^\perp\in Y^\perp$ and $y^\#\in Y^\#$ such that $x^*=y^\perp+y^\#$ where $\|y^\perp\|\leq \|x^*\|$. Hence, $\|x^*-y^\perp\|=\|y^\#\|=\|y^\#|_Y\|=\|x^*|_Y\|=d(x^*,Y^\perp)$. Therefore, $y^\perp=P_{Y^\perp}(x^*)$, which completes the proof.
\end{proof}

Our next observation serves a sufficient condition for property-$(HB)$.

\begin{prop}
	Let $Y$ be a subspace of $X$. If $Y^{\perp\perp}$ has property-$(HB)$ in $X^{**}$, then $Y$ has property-$(HB)$ in $X$.
\end{prop}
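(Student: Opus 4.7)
The plan is to transfer the bicontractive projection the hypothesis yields on $X^{***}$ down to $X^*$ via the canonical embeddings, and then appeal to Theorem~\ref{T1}.

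By Theorem~\ref{T1}~(a)$\Leftrightarrow$(b) applied to $Y^{\perp\perp}$ in $X^{**}$, the hypothesis provides a linear projection $P:X^{***}\to X^{***}$ with range $Y^{\perp\perp\perp}=(Y^{\perp\perp})^\perp$ and $\|P\|=1=\|I-P\|$. Writing $j_X:X\to X^{**}$ and $j_{X^*}:X^*\to X^{***}$ for the canonical embeddings, and recalling the standard identity $j_X^*\circ j_{X^*}=I_{X^*}$, I would define
\[
Q:=j_X^*\circ P\circ j_{X^*}:X^*\to X^*.
\]
The routine verifications are: (i) $Q(X^*)\subseteq Y^\perp$, because $P(j_{X^*}(x^*))\in Y^{\perp\perp\perp}$ and $j_X(Y)\subseteq Y^{\perp\perp}$; (ii) $Q$ fixes $Y^\perp$ pointwise, because $j_{X^*}(Y^\perp)\subseteq Y^{\perp\perp\perp}$ makes $P$ act as the identity there, after which $j_X^*\circ j_{X^*}=I_{X^*}$ finishes it; (iii) $\|Q\|\le 1$ and, using the rewrite $(I-Q)(x^*)=j_X^*\bigl((I-P)(j_{X^*}(x^*))\bigr)$, also $\|I-Q\|\le 1$. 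Thus $Q$ is a linear projection of $X^*$ onto $Y^\perp$ with $\|Q\|=\|I-Q\|=1$ (the trivial cases $Y=\{0\}$ or $Y=X$ being clear).

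To conclude via the implication (b)$\Rightarrow$(a) of Theorem~\ref{T1}, one still needs property-$(U)$ of $Y$ in $X$. Suppose some $y^*\in Y^*$ admits two distinct norm-preserving extensions $x_1^*,x_2^*\in X^*$. Under the isometric identification $Y^{\perp\perp}\cong Y^{**}$ induced by the bi-adjoint of the inclusion $Y\hookrightarrow X$, a direct computation gives, for every $y^{**}\in Y^{\perp\perp}$,
\[
j_{X^*}(x_k^*)(y^{**})=y^{**}(x_k^*|_Y)=y^{**}(y^*),
\]
independently of $k\in\{1,2\}$. Hence $j_{X^*}(x_1^*)$ and $j_{X^*}(x_2^*)$ are distinct elements of $X^{***}$ of common norm $\|y^*\|$ that restrict to the same functional on $Y^{\perp\perp}$ (which itself has norm $\|y^*\|$), contradicting property-$(U)$ of $Y^{\perp\perp}$ in $X^{**}$, a consequence of property-$(HB)$.

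The main obstacle I anticipate is the bookkeeping around the three chained dualities: one must repeatedly use the inclusions $j_X(Y)\subseteq Y^{\perp\perp}$ and $j_{X^*}(Y^\perp)\subseteq Y^{\perp\perp\perp}$, as well as the canonical isometry $Y^{\perp\perp}\cong Y^{**}$, in order for the range/kernel identifications for $Q$ and the property-$(U)$ pullback in the previous paragraph to work cleanly. Once these identifications are in place, the remainder is the functorial identity $j_X^*\circ j_{X^*}=I_{X^*}$ together with a direct appeal to Theorem~\ref{T1}.
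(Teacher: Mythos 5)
Your argument is correct, and the first half (property-$(U)$ passes from $Y^{\perp\perp}\subseteq X^{**}$ down to $Y\subseteq X$ via the canonical embeddings) is exactly the paper's opening step. Where you genuinely diverge is in how the bi-contractive projection on $X^*$ is obtained. The paper first invokes the fact (quoted from \cite{DPR}) that $Y$ is an ideal in $X$ iff $Y^{\perp\perp}$ is an ideal in $X^{**}$, concludes that $Y$ has property-$(SU)$ in $X$ and hence already carries a contractive projection $P$ on $X^*$ with kernel $Y^\perp$, and then transfers the missing bound $\|I-P\|\le 1$ by identifying $P^{**}$ with the given bi-contractive projection $Q$ on $X^{***}$ (using the uniqueness coming from property-$(SU)$) and restricting along the isometry $j_{X^*}$. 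You instead build the projection on $X^*$ directly by compression, $Q:=j_X^*\circ P\circ j_{X^*}$, checking range, fixed points, and both norm bounds by hand from $j_X(Y)\subseteq Y^{\perp\perp}$, $j_{X^*}(Y^\perp)\subseteq Y^{\perp\perp\perp}$, and $j_X^*\circ j_{X^*}=I_{X^*}$; then Theorem~\ref{T1}~(b)$\Rightarrow$(a) finishes. Your route is more self-contained: it bypasses the ideal/property-$(SU)$ machinery and the external citation, and it produces both norm estimates for $Q$ in one stroke rather than obtaining $\|Q\|\le 1$ from one source and $\|I-Q\|\le 1$ from another. The paper's route, on the other hand, records along the way the intermediate facts that $Y$ is an ideal and has property-$(SU)$ in $X$, which are of independent interest in that section. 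Both proofs are complete and rest on the same two pillars (descent of property-$(U)$ and Theorem~\ref{T1}).
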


\begin{proof}
	Clearly, $Y$ has property-$(U)$ in $X$ if $Y^{\perp\perp}$ has property-$(U)$ in $X^{**}$. In fact, if $y^*\in Y^*$ and distinct $x_1^*,x_2^*\in HB(y^*)$, then distinct $J_{X^*}(x_1^*),J_{X^*}(x_2^*)\in HB(J_{Y^*}(y^*))$, where $J_{X^*}:X^*\hookrightarrow X^{***}$ is the canonical embedding. It has been shown in \cite[pg. 11]{DPR} that $Y$ is ideal in $X$ if and only if $Y^{\perp\perp}$ is ideal in $X^{**}$. Hence, $Y$ has property-$(SU)$ in $X$ if $Y^{\perp\perp}$ has property-$(HB)$ in $X^{**}$, and we have the decomposition $X^*=Y^{\perp}\bigoplus Y^{\#}$.
	
	Owing to the property-$(HB)$ of $Y^{\perp\perp}$ in $X^{**}$, there exists $Q:X^{***}\ra X^{***}$, which is a bi-contractive projection with $\ker (Q)=Y^{\perp\perp\perp}$.  As $Y$ has property-$(SU)$ in $X$, there exists  a projection $P:X^*\ra Y^{\#}$ with $\|P\|=1$. Clearly, $P^{**}:X^{***}\ra Y^{***}$ is a contractive projection with $\ker (P^{**})=Y^{\perp\perp\perp}$, which leads to $Q=P^{**}$.
	
	Now, if there exists $x^*\in X^*$ such that $\|(I-P)x^*\|>\|x^*\|$ then  $\|(I-Q)x^*\|=\|(I-P^{**})x^*\|>\|x^*\|$. Property-$(HB)$ of $Y^{\perp\perp}$ in $X^{**}$ is contradicted.
\end{proof}

\subsection{On subspaces of tensor product spaces} We now identify a few cases mainly in the spaces of vector valued functions and tensor product spaces, where property-$(HB)$ remains stable.
By $(\Omega,\Sigma,\mu)$ we mean a probability space, and let $L_p(\mu,X)$ be the Banach space consisting of all $p$th Bochner integrable functions (see \cite{DU} for details) as defined in Section~2. It is well known that $L_p(\mu,X)^*\cong L_q(\mu,X^*)$, $\frac{1}{p}+\frac{1}{q}=1$ if and only if $X^*$ has the RNP.

A routine verification of the property-$(HB)$ in the proofs of \cite[Theorem~3.4, 3.6]{DPR} allows us to prove the following Theorems ~\ref{T9} and \ref{T10}.

\begin{thm}\label{T9}
Let $Y$ be a subspace of $X$, and let $X^*$ has the RNP. Then, $L_p(\mu,Y)$ has property-$(HB)$ in  $L_p(\mu,X)$ if and only if $Y$ has property-$(HB)$ in $X$, for $1< p<\infty$.
\end{thm}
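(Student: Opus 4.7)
The plan is to pass through the RNP identification $L_p(\mu,X)^* \cong L_q(\mu,X^*)$ (with $1/p+1/q=1$), under which $L_p(\mu,Y)^\perp$ corresponds naturally to $L_q(\mu,Y^\perp)$. With this identification in place, the task becomes one of transferring between bi-contractive projections $P\colon X^*\to Y^\perp$ and bi-contractive projections $Q\colon L_q(\mu,X^*)\to L_q(\mu,Y^\perp)$, while carefully tracking the strict inequality required by Definition~\ref{D1}(b). The overall strategy mirrors \cite[Theorem~3.4]{DPR} and \cite[Theorem~3.6]{DPR} (which handle the $(SU)$ version), adding only a careful bookkeeping of the norm of $I-P$ and the strict inequality in the second slot.

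For the forward direction, take the bi-contractive projection $P\colon X^*\to Y^\perp$ supplied by Theorem~\ref{T1}, and define $\widetilde{P}$ on $L_q(\mu,X^*)$ pointwise by $(\widetilde{P}f)(\omega)=P(f(\omega))$. Measurability and the projection property are routine. Since $\|P\|=\|I-P\|=1$, integrating the pointwise inequalities $\|P(f(\omega))\|\leq \|f(\omega)\|$ and $\|f(\omega)-P(f(\omega))\|\leq \|f(\omega)\|$ gives $\|\widetilde{P}\|=\|I-\widetilde{P}\|=1$. For the strict inequality, if $f=g+h$ with $g(\omega)\in G$ a.e.\ and $h\in L_q(\mu,Y^\perp)\setminus\{0\}$, then $h(\omega)\neq 0$ on a set $E$ of positive measure; on $E$, property-$(HB)$ of $Y$ in $X$ forces $\|g(\omega)+h(\omega)\|>\|g(\omega)\|$, while on $\Omega\setminus E$ one still has $\|g(\omega)+h(\omega)\|\geq \|g(\omega)\|$. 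Integrating the $q$-th powers yields $\|f\|_q>\|g\|_q$, as needed.

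For the converse, adopt the strategy of \cite[Theorem~3.6]{DPR}. Given the bi-contractive projection $Q$ on $L_q(\mu,X^*)$ with range $L_q(\mu,Y^\perp)$, the crucial step is to show that $Q$ acts pointwise: for each $x^*\in X^*$, $Q(x^*\chi_\Omega)$ is (essentially) a constant function whose value lies in $Y^\perp$, and the assignment $x^*\mapsto P(x^*)$ to this constant defines a bi-contractive projection $X^*\to Y^\perp$. Bi-contractivity of $P$ follows by testing $Q$ against the constant function $x^*\chi_\Omega$ and using $\mu(\Omega)=1$, while the strict inequality for $P$ descends from that for $Q$ via the same test. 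Theorem~\ref{T1} then delivers property-$(HB)$ for $Y$ in $X$.

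The main obstacle is precisely this rigidity step in the converse: forcing $Q$ to respect constant functions (or, more generally, to act pointwise). This is not automatic for an arbitrary bi-contractive projection; however, it is forced by combining the uniqueness part of property-$(HB)$ (that is, property-$(U)$) for $L_p(\mu,Y)$ in $L_p(\mu,X)$ with an $L_\infty(\mu)$-module argument. The uniqueness of the Hahn--Banach extensions at simple functions $x^*\chi_E$, together with contractivity of both $Q$ and $I-Q$, leaves no room for $Q(x^*\chi_\Omega)$ to deviate from a constant function. Once this rigidity is established, the transfer of bi-contractivity and the strict inequality from $Q$ to $P$ is immediate by integration against constants, completing the proof.
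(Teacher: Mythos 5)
Your proposal is correct and follows essentially the same route as the paper, which itself only remarks that the result is a ``routine verification of property-$(HB)$'' in the proofs of \cite[Theorems~3.4, 3.6]{DPR}: pass to $L_q(\mu,X^*)$ via the RNP, push the bi-contractive projection forward pointwise, and integrate the pointwise strict inequality over a set of positive measure. The only comment worth adding is that the ``rigidity'' step you worry about in the converse is automatic once you invoke Theorem~\ref{T1}$(c)$: since $d(x^*\chi_\Omega, L_q(\mu,Y^\perp))=d(x^*,Y^\perp)$ is attained by constant functions, the (unique, linear, norm-one) metric projection onto $L_q(\mu,Y^\perp)$ must send $x^*\chi_\Omega$ to $y^\perp\chi_\Omega$, and the induced map $x^*\mapsto y^\perp$ is the required linear norm-one metric projection onto $Y^\perp$.
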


  We denote the injective (projective) tensor product for Banach spaces $X$ and $Z$ by $X\widehat{\otimes}_\e Z$ ($X\widehat{\otimes}_\pi Z$) (see \cite{R} for details).  We denote the injective (projective) norm of $\varphi\in X\widehat{\otimes}_\e Z$ ($\varphi\in X\widehat{\otimes}_\pi Z$) by $\e(\varphi)$ ($\pi (\varphi)$).
  It is well known that if one of the spaces $X^*, Z^*$ has the RNP and one of them has the {\it approximation property}, $\left(X\widehat{\otimes}_\e Z\right)^*\cong X^*\widehat{\otimes}_\pi  Z^*$ (see \cite[Pg. 114]{R}). In the case of the projective tensor product, we have $(X\widehat{\otimes}_\pi Z)^*\cong \mc{L}(X,Z^*)$: the space of all bounded linear operators from $X$ to $Z^*$ (see \cite[Pg.24]{R}).
  Let us also recall that if $Y$ is a subspace of $Z$, then $X\widehat{\otimes}_\e Y$ is a subspace of $X\widehat{\otimes}_\e Z$; by contrast, $X\widehat{\otimes}_\pi Y$ is not necessarily a subspace of $X\widehat{\otimes}_\pi Z$ in general. $X\widehat{\otimes}_\pi Y$ is a  subspace of $X\widehat{\otimes}_\pi Z$ if $Y$ is an ideal in $Z$ (see \cite[Theorem 1.(i)]{RA}).

A routine generalization for property-$(HB)$ of \cite[Theorem~3.6]{DPR} leads to the following.

\begin{thm}\label{T10}
Let $X$ be a Banach space such that $X^*$ has the RNP.
Let $E$ be an $L_1$-predual space and let $Y$ be a subspace of $X$. Then, $E\widehat{\otimes}_\e Y$ has property-$(HB)$ in $E\widehat{\otimes}_\e X$ if and only if $Y$ has property-$(HB)$ in $X$.
\end{thm}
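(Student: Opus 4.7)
The plan is to mirror the pattern of \cite[Theorem~3.6]{DPR} but replace the ``property-$(SU)$'' characterization (Theorem~\ref{T6}) with the ``property-$(HB)$'' characterization (Theorem~\ref{T1}) at every step. The key fact that makes this work is the standard duality: since $E$ is an $L_1$-predual it has the metric approximation property, and since $X^*$ has the RNP, there is a probability/measure space $(\Omega,\Sigma,\mu)$ with $E^*\cong L_1(\mu)$, giving
\[
(E\widehat{\otimes}_\e X)^*\cong E^*\widehat{\otimes}_\pi X^*\cong L_1(\mu,X^*),
\]
and, by the same token, $(E\widehat{\otimes}_\e Y)^*\cong L_1(\mu,Y^*)$. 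Under these identifications the annihilator $(E\widehat{\otimes}_\e Y)^\perp$ inside $L_1(\mu,X^*)$ is exactly $L_1(\mu,Y^\perp)$. I would record these identifications as the first step and then use Theorem~\ref{T1}(c) as the common bridge.

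For the converse direction (``$Y$ has property-$(HB)$ in $X\Rightarrow E\widehat{\otimes}_\e Y$ has property-$(HB)$ in $E\widehat{\otimes}_\e X$''), Theorem~\ref{T1} yields a linear projection $P:X^*\ra Y^\perp$ with $\|P\|=\|I-P\|=1$ which coincides with the metric projection $P_{Y^\perp}$. Define pointwise
\[
\widetilde P:L_1(\mu,X^*)\ra L_1(\mu,Y^\perp),\qquad (\widetilde P f)(\omega):=P(f(\omega)).
\]
Measurability of $P\circ f$ follows from continuity of $P$, and the triangle/integral inequalities give $\|\widetilde P f\|_1\le\|f\|_1$ and $\|(I-\widetilde P)f\|_1\le\|f\|_1$, while $\widetilde P^2=\widetilde P$ and $\mathrm{ran}(\widetilde P)=L_1(\mu,Y^\perp)$. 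Since $\widetilde P$ is the metric projection onto a Chebyshev subspace (the minimizer $y^\perp$ is picked pointwise uniquely because $P(f(\omega))$ is the unique best approximant to $f(\omega)$ in $Y^\perp$ for a.e.\ $\omega$), Theorem~\ref{T1}(c)$\Rightarrow$(a) applies to $E\widehat{\otimes}_\e Y\ci E\widehat{\otimes}_\e X$.

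For the forward direction, assume $E\widehat{\otimes}_\e Y$ has property-$(HB)$ in $E\widehat{\otimes}_\e X$. By Theorem~\ref{T1} there is a linear contractive projection $\widetilde P:L_1(\mu,X^*)\ra L_1(\mu,Y^\perp)$ with $\|I-\widetilde P\|=1$. Fix any $A\in\Sigma$ with $0<\mu(A)<\iy$ and, for $x^*\in X^*$, apply $\widetilde P$ to the constant-on-$A$ function $\chi_A\otimes x^*$; by averaging (or by pulling back through the natural embedding $X^*\hookrightarrow L_1(\mu,X^*)$ given by $x^*\mapsto \mu(A)^{-1}\chi_A\otimes x^*$) one obtains a well-defined linear map $P:X^*\ra Y^\perp$ that inherits $\|P\|=\|I-P\|=1$ and $P^2=P$. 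Theorem~\ref{T1}(b)$\Rightarrow$(a) then gives property-$(HB)$ for $Y$ in $X$.

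The only genuinely delicate step is the identification $(E\widehat{\otimes}_\e Y)^\perp\leftrightarrow L_1(\mu,Y^\perp)$ under the isomorphism $(E\widehat{\otimes}_\e X)^*\cong L_1(\mu,X^*)$; this is essentially the same bookkeeping carried out in \cite[Theorem~3.6]{DPR}, where the compatibility of the annihilator with the pointwise structure is verified using the approximation property of $E$ together with the RNP of $X^*$. Once this is in place, all other verifications (measurability of $P\circ f$, the two norm-one estimates, reducing to constant functions in the forward direction) are routine integral/operator manipulations, and no new ideas beyond Theorem~\ref{T1} are needed.
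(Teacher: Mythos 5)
Your proposal is correct and follows exactly the route the paper intends: the paper gives no written proof here, stating only that the result is ``a routine generalization for property-$(HB)$ of \cite[Theorem~3.6]{DPR},'' and your argument --- identifying $(E\widehat{\otimes}_\e X)^*\cong L_1(\mu,X^*)$, $(E\widehat{\otimes}_\e Y)^\perp\cong L_1(\mu,Y^\perp)$, and passing the bi-contractive metric projection back and forth via Theorem~\ref{T1} --- is precisely that generalization. The only point to make explicit in the forward direction is that property-$(U)$ of $Y$ in $X$ (needed to invoke Theorem~\ref{T1}$(b)\Rightarrow(a)$ alongside your averaged projection $P$) follows from the usual simple-tensor lifting $x^*\mapsto \mu(A)^{-1}\chi_A\otimes x^*$ of two distinct norm-preserving extensions.
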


It is well known that $C(K,X)\cong C(K)\widehat{\otimes}_\e X$ for any Banach space $X$. 

\begin{cor}
Let $Y$ be a subspace of $X$, and let $X^*$ has the RNP. Then, $Y$ has property-$(HB)$ in $X$ if and only if $C(K,Y)$ has property-$(HB)$ in $C(K,X)$. 
\end{cor}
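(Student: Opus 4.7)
The plan is to deduce this corollary as a direct specialization of Theorem~\ref{T10} to the choice $E = C(K)$. Two classical ingredients are needed to match the hypotheses. First, $C(K)$ is an $L_1$-predual: its dual $M(K)$, the space of regular Borel measures on $K$, is isometrically isomorphic to some $L_1(\mu)$. Second, as recalled in the paragraph immediately preceding the corollary, there is a canonical isometric identification $C(K,Z) \cong C(K)\widehat{\otimes}_\e Z$ for any Banach space $Z$; moreover, applied to $Y\subseteq X$, this identification sends the subspace $C(K,Y)$ of $C(K,X)$ exactly to the subspace $C(K)\widehat{\otimes}_\e Y$ of $C(K)\widehat{\otimes}_\e X$ (using that the injective tensor norm restricts correctly to subspaces, as already noted before Theorem~\ref{T10}).

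With these ingredients in hand, the argument proceeds in two steps. First I would transport property-$(HB)$ across the isometric isomorphism of pairs
\[
\bigl(C(K,Y),\,C(K,X)\bigr) \;\cong\; \bigl(C(K)\widehat{\otimes}_\e Y,\; C(K)\widehat{\otimes}_\e X\bigr),
\]
observing that property-$(HB)$, as formulated in Definition~\ref{D1}, depends only on the norm on the dual of the ambient space, the annihilator of the subspace, and the bi-contractive projection onto that annihilator, all of which are preserved by any isometric isomorphism of ambient-subspace pairs (the dual of the isomorphism carries the projection $P$ to a projection of the same norm with the corresponding kernel). Hence $C(K,Y)$ has property-$(HB)$ in $C(K,X)$ if and only if $C(K)\widehat{\otimes}_\e Y$ has property-$(HB)$ in $C(K)\widehat{\otimes}_\e X$.

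Second, I would apply Theorem~\ref{T10} with $E = C(K)$. Both of its hypotheses hold: $X^*$ has the RNP by assumption, and $E = C(K)$ is an $L_1$-predual by the first ingredient above. Therefore Theorem~\ref{T10} gives that $C(K)\widehat{\otimes}_\e Y$ has property-$(HB)$ in $C(K)\widehat{\otimes}_\e X$ if and only if $Y$ has property-$(HB)$ in $X$. Chaining this with the equivalence from the previous paragraph yields the corollary.

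There is no real obstacle here; this is essentially a two-line deduction. The only bookkeeping point worth verifying explicitly is the invariance of property-$(HB)$ under isometric isomorphism of pairs (the paper warns that property-$(U)$ is not invariant under isometry, but for property-$(HB)$ the extra requirement $\|x^*\| \geq \|y^\perp\|$ is symmetric in the decomposition and is preserved by any isometry of the dual), so that the passage between $C(K,Y)\subseteq C(K,X)$ and $C(K)\widehat{\otimes}_\e Y \subseteq C(K)\widehat{\otimes}_\e X$ is legitimate.
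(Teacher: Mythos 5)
Your proposal is correct and follows exactly the route the paper intends: the corollary is deduced from Theorem~\ref{T10} with $E = C(K)$ via the identification $C(K,X)\cong C(K)\widehat{\otimes}_\e X$ stated immediately before it. Your explicit check that property-$(HB)$ transfers across an isometric isomorphism of subspace--superspace pairs is a sensible bookkeeping step that the paper leaves implicit.
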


\begin{rem}\label{R1}
	As stated earlier, if $Y$ is an ideal of a Banach space $Z$, then the projective tensor project $Y\widehat{\otimes}_\pi X$ is a subspace of $Z\widehat{\otimes}_\pi X$. In addition, if $X$ is a Banach space, then $L_1(\mu)\widehat{\otimes}_\pi X\cong L_1(\mu, X)$ (see \cite[Example 2.19]{R}). Along with \cite[Example~2.3]{BR}, one can conclude that property-$(U)$ may not be stable under a projective tensor product.
\end{rem}

However, we can identify a class of functionals in $(X\widehat{\otimes}_{\pi}Y)^*$ that still have unique norm-preserving extensions over $X\widehat{\otimes}_{\pi}Z$.

\begin{thm}\label{T12}
	Let $X$ and $Z$ be Banach spaces, and $Y$ be a subspace of $Z$. If $Y$ has property-$(SU)$ in $Z$, then every isometry in $\mc{L}(X,Y^*)\cong (X\widehat{\otimes}_{\pi}Y)^*$ has a unique norm-preserving extension to $(X\widehat{\otimes}_{\pi}Z)$. 
\end{thm}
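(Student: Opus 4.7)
The plan is to reduce the tensor-product extension problem to a pointwise Hahn--Banach extension problem via the identification $(X\widehat{\otimes}_{\pi}Y)^*\cong \mc{L}(X,Y^*)$, and then to apply property-$(SU)$ of $Y$ in $Z$ at each $x\in X$ separately.

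First, I would use Theorem~\ref{T6} to extract from property-$(SU)$ a \emph{linear} norm-preserving Hahn--Banach extension operator $S\colon Y^*\to Z^*$, whose range is the linear subspace $Y^\#$; by property-$(U)$, $S(y^*)$ is the unique norm-preserving extension of each $y^*\in Y^*$ to $Z$. Next I would set up the duality dictionary: under $(X\widehat{\otimes}_{\pi}Y)^*\cong \mc{L}(X,Y^*)$ and $(X\widehat{\otimes}_{\pi}Z)^*\cong \mc{L}(X,Z^*)$, the canonical inclusion $X\widehat{\otimes}_{\pi}Y\ci X\widehat{\otimes}_{\pi}Z$ (valid since $Y$ is an ideal in $Z$) corresponds to the dual map $\Phi\mapsto(x\mapsto\Phi(x)|_Y)$. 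Thus $\Phi$ extends $T$ precisely when $\Phi(x)|_Y=T(x)$ for every $x\in X$, and on either side the norm is the operator norm $\sup_{\|x\|\le 1}\|\cdot(x)\|$.

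For existence, I would simply set $\widetilde{T}:=S\circ T\in\mc{L}(X,Z^*)$. Then $\widetilde{T}(x)|_Y=T(x)$ for all $x$, and because $T$ is an isometry, $\|\widetilde{T}(x)\|=\|S(T(x))\|=\|T(x)\|=\|x\|$; so $\widetilde{T}$ is itself an isometry, and in particular a norm-preserving extension of $T$.

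For uniqueness, let $\Phi\in\mc{L}(X,Z^*)$ be any norm-preserving extension of $T$. Then $\|\Phi\|=\|T\|=1$, so $\|\Phi(x)\|\le\|x\|$. On the other hand, $\|\Phi(x)\|\ge\|\Phi(x)|_Y\|=\|T(x)\|=\|x\|$, forcing equality. Hence, for each $x\in X$, $\Phi(x)\in Z^*$ is a norm-preserving extension of $T(x)\in Y^*$ to $Z$, and property-$(U)$ of $Y$ in $Z$ forces $\Phi(x)=S(T(x))=\widetilde{T}(x)$.

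The only delicate point will be the bookkeeping that translates between the tensor-product duality and the operator-valued picture. The role of property-$(SU)$ over the weaker property-$(U)$ is to supply \emph{linearity} of $S$, which is what makes $\widetilde{T}=S\circ T$ a genuine element of $\mc{L}(X,Z^*)$; and the role of the isometry hypothesis on $T$ is to force $\Phi(x)$ to be a norm-preserver at \emph{every} $x\in S_X$ rather than only at those $x$ where $T$ attains its norm, so that property-$(U)$ becomes applicable pointwise.
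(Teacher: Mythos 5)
Your proposal is correct and follows essentially the same route as the paper: both proofs hinge on the observation that for an isometry $T$ and any norm-preserving extension $\Phi$, the inequalities $\|x\|=\|Tx\|\leq\|\Phi x\|\leq\|\Phi\|\,\|x\|=\|x\|$ force $\Phi x$ to be a norm-preserving extension of $Tx$ for \emph{every} $x\in S_X$, so that property-$(U)$ (implied by property-$(SU)$) pins down $\Phi x$ pointwise. The only difference is presentational — the paper argues by contradiction with two extensions $T_1,T_2$, while you argue directly and add an explicit existence construction $\widetilde{T}=S\circ T$ via the linear extension operator, which the paper leaves to Hahn--Banach.
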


\begin{proof}
	Let $S\in \mc{L}(X,Y^*)$ be an isometry and $T_1,T_2$ be two distinct norm-preserving extensions of $S$ to $(X\widehat{\otimes}_{\pi}Z)$. Let $x_0\in S_X$ such that $T_1x_0\neq T_2x_0$. As $T_1|_{(X\widehat{\otimes}_{\pi}Y)}=S=T_1|_{(X\widehat{\otimes}_{\pi}Y)}$, we have $T_1(x_0\otimes y)=S(x_0\otimes y)=T_2(x_0\otimes y)$, that is, $T_1x_0(y)=Sx_0(y)=T_2x_0(y)$ for all $y\in Y$. Hence, $T_1x_0$ and $T_2x_0$ are two linear extensions of $Sx_0$. Because $S$ is an isometry, we have $\|Sx_0\|=\|x_0\|=1$. Now, $\|S\|=\|T_i\|\geq \|T_ix_0\|\geq \|Sx_0\|=\|x_0\|=1=\|S\|$, $i=1,2$. Consequently, $\|T_1x_0\|=\|T_2x_0\|=\|Sx_0\|$. Hence, $T_1x_0$ and $T_2x_0$ are two distinct norm-preserving extensions of $Sx_0$.
\end{proof}

We have an affirmative answer to the converse of Theorem~\ref{T12}.

\begin{thm}\label{T13}
	Let $X$ and $Z$ be Banach spaces, and $Y$ be a subspace of $Z$ such that $(X\widehat{\otimes}_{\pi}Y)$ is a subspace of $(X\widehat{\otimes}_{\pi}Z)$. If $(X\widehat{\otimes}_{\pi}Y)$ has property-$(U)$ in $(X\widehat{\otimes}_{\pi}Z)$, then $Y$ has property-$(U)$ in $Z$.
\end{thm}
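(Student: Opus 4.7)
The plan is to argue by contrapositive. Suppose $Y$ fails property-$(U)$ in $Z$; then there exist $y^*\in Y^*$ and two distinct norm-preserving extensions $z_1^*, z_2^*\in HB(y^*)\subseteq Z^*$. I will manufacture, from this data, a pair of distinct norm-preserving extensions of a single element of $(X\widehat{\otimes}_\pi Y)^*\cong \mc{L}(X,Y^*)$ to $(X\widehat{\otimes}_\pi Z)^*\cong \mc{L}(X,Z^*)$, which will contradict property-$(U)$ of $X\widehat{\otimes}_\pi Y$ in $X\widehat{\otimes}_\pi Z$.

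First I would fix any norm-one functional $x_0^*\in S_{X^*}$ (guaranteed by Hahn--Banach) and define rank-one operators
\[
S\colon X\to Y^*,\quad Sx=x_0^*(x)\,y^*,\qquad T_i\colon X\to Z^*,\quad T_ix=x_0^*(x)\,z_i^*,\ i=1,2.
\]
A direct computation gives $\|S\|=\|x_0^*\|\,\|y^*\|=\|y^*\|$ and similarly $\|T_i\|=\|z_i^*\|=\|y^*\|$, so $\|T_1\|=\|T_2\|=\|S\|$.

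Next I would verify that, under the canonical identifications, each $T_i$ restricts to $S$ on the subspace $X\widehat{\otimes}_\pi Y\subseteq X\widehat{\otimes}_\pi Z$. It suffices to check this on elementary tensors $x\otimes y$ with $y\in Y$, by density and continuity: we have $T_i(x\otimes y)=(T_ix)(y)=x_0^*(x)z_i^*(y)=x_0^*(x)y^*(y)=(Sx)(y)=S(x\otimes y)$, since $z_i^*|_Y=y^*$. Hence $T_1$ and $T_2$ are both norm-preserving extensions of $S$ across the inclusion $X\widehat{\otimes}_\pi Y\hookrightarrow X\widehat{\otimes}_\pi Z$.

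Finally I would show $T_1\neq T_2$: pick $z\in Z$ with $z_1^*(z)\neq z_2^*(z)$ and any $x\in X$ with $x_0^*(x)\neq 0$; then $(T_1x)(z)=x_0^*(x)z_1^*(z)\neq x_0^*(x)z_2^*(z)=(T_2x)(z)$. This contradicts property-$(U)$ of $X\widehat{\otimes}_\pi Y$ in $X\widehat{\otimes}_\pi Z$, finishing the proof. The only subtlety — and the step I would be most careful with — is the compatibility of the two operator-space identifications with the tensor-product inclusion; once that is made precise on elementary tensors and extended by density, the construction goes through routinely.
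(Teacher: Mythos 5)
Your proposal is correct and is essentially the paper's own argument: the same rank-one operators $x\mapsto x_0^*(x)y^*$ and $x\mapsto x_0^*(x)z_i^*$, the same verification on elementary tensors followed by density, and the same conclusion that $T_1,T_2$ are distinct norm-preserving extensions of $S$. The only difference is cosmetic — you spell out the distinctness of $T_1$ and $T_2$ explicitly, which the paper leaves implicit.
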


\begin{proof}
	Let $y^*\in S_{Y^*}$ and distinct $z_1^*,z_2^*\in HB(y^*)$. Choose $x^*\in S_{X^*}$ and define the following maps:
	$$ S(x)=x^*(x)y^* \text{ for } x\in X \mbox{~and}$$
	$$ T_1(x)=x^*(x)z_1^*,  ~
	 T_2(x)=x^*(x)z_2^* \text{ for } x\in X.$$
	One can check that $S\in \mc{L}(X,Y^*), T_1,T_2\in \mc{L}(X,Z^*)$ and $\|S\|=\|T_1\|=\|T_2\|$. We consider $S\in (X\widehat{\otimes}_{\pi}Y)^*$ and $T_1,T_2\in (X\widehat{\otimes}_{\pi}Z)^*$. For any $x\otimes y\in X\otimes_\pi Y$, we have $S(x\otimes y)=Sx(y)=x^*(x)y^*(y)=x^*(x)z_1^*(y)=T_1x(y)=T_1(x\otimes y)$, and we obtain $S(x\otimes y)=T_2(x\otimes y)$. If $D$ is the linear span of all simple tensors in $X\otimes_\pi Y$, then $T_1|_D=T_2|_D=S$. Based on the density of $D$ in $(X\widehat{\otimes}_{\pi}Y)$, we have $T_1|_{(X\widehat{\otimes}_{\pi}Y)}=T_2|_{(X\widehat{\otimes}_{\pi}Y)}=S$. Hence, distinct $T_1,T_2\in HB(S)$.
\end{proof}

\subsection{On quotient spaces and higher duals of Banach spaces} Let $W,Y$ be subspaces of $X$ such that $W\ci Y\ci X$, where $Y$ has property-$(SU)$ in $X$; then, it is shown in \cite{DPR} that $Y/W$ has property-$(SU)$ in $X/W$. One can check $(Y/W)^\#=\{w^\perp\in W^\perp_{X^*}:\|w^\perp|_Y\|=\|w^\perp\|\}$. 

\begin{thm}\label{T3}
Let $W, Y$ be subspaces of $X$ where $W\ci Y\ci X$. If $Y$ has property-$(HB)$ in $X$, then $Y/W$ has property-$(HB)$ in $X/W$. 
\end{thm}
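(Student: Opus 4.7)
The plan is to deduce property-$(HB)$ of $Y/W$ in $X/W$ from Theorem~\ref{T1} by restricting, to $W^\perp$, the bi-contractive projection that property-$(HB)$ of $Y$ in $X$ provides on $X^*$. The strategy rests on the standard isometric identification $(X/W)^* \cong W^\perp \subseteq X^*$, under which the annihilator of $Y/W$ in $(X/W)^*$ corresponds to $Y^\perp \cap W^\perp = Y^\perp$, the last equality holding because $W \subseteq Y$.

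First I would dispose of the property-$(U)$ half of condition $(b)$ of Theorem~\ref{T1} for $Y/W$ in $X/W$: since property-$(HB)$ implies property-$(SU)$ (Theorems \ref{T1} and \ref{T6}) and property-$(SU)$ is preserved by quotienting by a subspace $W \subseteq Y$, as recalled in the paragraph immediately preceding the statement of this theorem, $Y/W$ has property-$(SU)$, and in particular property-$(U)$, in $X/W$.

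Next, Theorem~\ref{T1} applied to $Y \subseteq X$ produces a projection $P : X^* \to Y^\perp$ with $\|P\| = \|I - P\| = 1$. Set $\tilde{P} := P|_{W^\perp}$. Since $P(W^\perp) \subseteq Y^\perp \subseteq W^\perp$, the map $\tilde{P}$ sends $W^\perp$ into itself with range $Y^\perp$, and for any $x^* \in W^\perp$ the element $(I-P)x^* = x^* - Px^*$ also lies in $W^\perp$; consequently both $\tilde{P}$ and $I - \tilde{P}$ have operator norm at most $1$. Transferring through the isometric identification $(X/W)^* \cong W^\perp$, this gives a bi-contractive projection on $(X/W)^*$ with range $(Y/W)^\perp$. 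Together with the property-$(U)$ observation above, condition $(b)$ of Theorem~\ref{T1} is verified for $Y/W$ in $X/W$, and the implication $(b) \Rightarrow (a)$ of that theorem yields property-$(HB)$ of $Y/W$ in $X/W$.

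There is no substantial obstacle in this argument; the only bookkeeping is to verify that the bi-contractive projection $P$ leaves $W^\perp$ invariant and to unwind the identification $(X/W)^* \cong W^\perp$. Both steps are immediate from the inclusion $W \subseteq Y$, so the proof is essentially a single restriction argument on top of Theorem~\ref{T1} and the known quotient-stability of property-$(SU)$.
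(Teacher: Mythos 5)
Your proof is correct and is essentially the paper's argument: both rest on the single observation that, because $Y^\perp\subseteq W^\perp$, the $(HB)$-decomposition of a functional in $W^\perp\cong (X/W)^*$ takes place entirely inside $W^\perp$, combined with the quoted quotient-stability of property-$(SU)$. The only cosmetic difference is that you package this as restricting the bi-contractive projection to $W^\perp$ and then invoke Theorem~\ref{T1}$(b)$, whereas the paper verifies the norm inequalities of Definition~\ref{D1} directly on the decomposition $x^*=y^\#+y^\perp$, using that $(Y/W)^\#\subseteq Y^\#$ and $(Y/W)^\perp\subseteq Y^\perp$.
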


\begin{proof}
We have that $Y/W$ has property-$(SU)$ in $X/W$. Let $x^*\in W^{\perp}_{X^*}(=(X/W)^*)$ and $x^*=y^\#+y^\perp$, where $y^\#\in (Y/W)^{\#}$ and $y^\perp\in Y^\perp_{W^\perp}(=(Y/W)^\perp)$. It remains to show $\|x^*\|\geq \|y^\perp\|$. Since $(Y/W)^\#=\{w^\perp\in W^\perp_{X^*}:\|w^\perp|_Y\|=\|w^\perp\|\}$ and $y^\#\in (Y/W)^{\#}$, we have $y^\#\in Y^{\#}$. Moreover, $y^\perp\in Y^{\perp}_{W^\perp}\ci Y^{\perp}_{X^*}$; we can view $x^*=y^\#+y^\perp$, where $y^\#$ and $y^\perp$ are obtained from $Y^{\#}$ and $Y^{\perp}$, respectively. Hence, $\|x^*\|\geq \|y^\perp\|$ as $Y$ has property-$(HB)$ in $X$.
\end{proof}

The converse of Theorem~\ref{T3} is true under the assumption that $W$ is an $M$-ideal in $X$. 

\begin{thm}
Let $J$ and $Y$ be subspaces of $X$ with $J\ci Y\ci X $, and let $J$ be an $M$-ideal in $X$. If $Y/J$ has property-$(HB)$ in $X/J$, then $Y$ has property-$(HB)$ in $X$.
\end{thm}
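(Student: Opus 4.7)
The plan is to produce the norm-one linear metric projection onto $Y^{\perp}$ explicitly, and then invoke Theorem~\ref{T1}. Since $J$ is an $M$-ideal in $X$, we have the decomposition $X^{*}=J^{\perp}\oplus_{\ell_{1}} J^{\#}$; let $\rho:X^{*}\to J^{\perp}$ denote the associated contractive projection. Because $J\subseteq Y$, the canonical identification $(X/J)^{*}\cong J^{\perp}$ carries $(Y/J)^{\perp}$ to $Y^{\perp}$; thus property-$(HB)$ of $Y/J$ in $X/J$ together with Theorem~\ref{T1} furnishes a linear bi-contractive projection $Q:J^{\perp}\to Y^{\perp}$ which is, in fact, the metric projection inside $J^{\perp}$. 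Set $P:=Q\circ\rho$.

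A routine check shows that $P:X^{*}\to Y^{\perp}$ is a linear projection; moreover, writing $x^{*}=j^{\perp}+j^{\#}$ with $j^{\perp}\in J^{\perp}$ and $j^{\#}\in J^{\#}$, one has
\[
(I-P)(x^{*})=(I-Q)(j^{\perp})+j^{\#},
\]
an $\ell_{1}$-orthogonal sum in the decomposition $X^{*}=J^{\perp}\oplus_{\ell_{1}}J^{\#}$. Hence both $\|P\|\leq 1$ and $\|I-P\|\leq 1$. What remains is to verify that $P$ actually coincides with the metric projection onto $Y^{\perp}$, i.e., $P(x^{*})$ is the \emph{unique} closest point of $Y^{\perp}$ to $x^{*}$.

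For existence, $J$ inherits $M$-ideal status in $Y$ (the $3$-ball property passes to any intermediate subspace containing $J$), giving $Y^{*}=J^{\perp}_{Y^{*}}\oplus_{\ell_{1}}J^{\#}_{Y^{*}}$. Restricting the decomposition of $x^{*}$ to $Y$ then yields $\|x^{*}|_{Y}\|=\|j^{\perp}|_{Y}\|+\|j^{\#}\|$. Combined with the fact that $Q$ is the metric projection in $J^{\perp}$, this produces $\|x^{*}-P(x^{*})\|=\|(I-Q)(j^{\perp})\|+\|j^{\#}\|=\|j^{\perp}|_{Y}\|+\|j^{\#}\|=\|x^{*}|_{Y}\|=d(x^{*},Y^{\perp})$.

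The main obstacle is uniqueness, which requires using both hypotheses simultaneously. Given any best approximation $y_{0}^{\perp}\in Y^{\perp}$, decompose $x^{*}-y_{0}^{\perp}=\alpha+\beta\in J^{\perp}\oplus_{\ell_{1}}J^{\#}$. Since $\beta\in J^{\#}$ and $\beta|_{J}=x^{*}|_{J}=j^{\#}|_{J}$, the uniqueness of the norm-preserving extension for the $M$-ideal $J$ forces $\beta=j^{\#}$. A norm count using $\|x^{*}-y_{0}^{\perp}\|=\|x^{*}|_{Y}\|$ then shows that $\alpha\in (X/J)^{*}$ is a norm-preserving extension of $j^{\perp}|_{Y/J}\in (Y/J)^{*}$; property-$(U)$ of $Y/J$ in $X/J$ (a consequence of property-$(HB)$ via Theorem~\ref{T1}) pins $\alpha=(I-Q)(j^{\perp})$, and therefore $y_{0}^{\perp}=x^{*}-\alpha-\beta=P(x^{*})$. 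This verifies condition (c) of Theorem~\ref{T1}, and property-$(HB)$ of $Y$ in $X$ follows.
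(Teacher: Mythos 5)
Your argument is correct, and it reaches the conclusion by a genuinely different route than the paper. The paper verifies Definition~\ref{D1}$(b)$ directly: it first quotes \cite[Theorem~4.5]{DPR} to get property-$(SU)$ of $Y$ in $X$ for free, and then, for $x^*=y^{\#}+y^{\perp}$, introduces the unique norm-preserving extension $\widetilde{x^*}$ of $x^*|_J$ (which is exactly the $J^{\#}$-component $j^{\#}$ in your notation), observes that $x^*-\widetilde{x^*}=(y^{\#}-\widetilde{x^*})+y^{\perp}$ is the $(Y/J)^{\#}\oplus (Y/J)^{\perp}$ decomposition inside $J^{\perp}=(X/J)^*$, and chains the two inequalities $\|x^*\|\geq\|x^*-\widetilde{x^*}\|$ (from the $\ell_1$-sum) and $\|x^*-\widetilde{x^*}\|\geq\|y^{\perp}\|$ (from property-$(HB)$ in the quotient). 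You instead go through the metric-projection characterization of Theorem~\ref{T1}$(c)$, building $P=Q\circ\rho$ explicitly and re-proving the Chebyshev/uniqueness part from scratch rather than importing it from \cite{DPR}; this costs you the existence and uniqueness verifications (both of which you carry out correctly, using that $J$ remains an $M$-ideal in $Y$ so that $\|x^*|_Y\|=\|j^{\perp}|_Y\|+\|j^{\#}\|$, and that the quotient hypothesis pins down $\alpha$), but buys a self-contained proof with an explicit formula for the bi-contractive projection witnessing property-$(HB)$. The two arguments ultimately exploit the same structure --- the $L$-decomposition $X^*=J^{\perp}\oplus_{\ell_1}J^{\#}$ and the identification $(X/J)^*\cong J^{\perp}$ carrying $(Y/J)^{\perp}$ to $Y^{\perp}$ --- so neither is more general, but yours is the more constructive and the paper's the more economical. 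One small remark: your identification of $\beta$ with $j^{\#}$ via uniqueness of norm-preserving extensions over $J$ is more than is needed; since $y_0^{\perp}\in J^{\perp}$, the uniqueness of the direct-sum decomposition already gives $\alpha=j^{\perp}-y_0^{\perp}$ and $\beta=j^{\#}$ immediately.
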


\begin{proof}
First, we have that $Y$ has property-$(SU)$ in $X$ (see \cite[Theorem~4.5]{DPR}). Consider $x^*\in X^*$ and $x^*=y^\#+y^\perp$, where $y^\#\in Y^\#$, $y^\perp \in Y^\perp$. Our aim is to show that $\|x^*\|\geq \|y^\perp\|$. Let $\widetilde{x^*}\in X^*$ be the unique norm-preserving extension of $x^*|_J$. Clearly, $x^*-\widetilde{x^*},y^\#-\widetilde{x^*},y^\perp \in J^{\perp}$. Moreover, as $\widetilde{x^*}\in J^\#$, we have $\widetilde{x^*}\in Y^\#$; hence, $y^\#-\widetilde{x^*}\in Y^\#$. Thus, $x^*-\widetilde{x^*}=y^\#-\widetilde{x^*}+y^\perp$, where $x^*-\widetilde{x^*}\in J^\perp(=(X/J)^*)$, $y^\#-\widetilde{x^*}\in (Y/J)^\#(=Y^\#_{J^\perp})$ and $y^\perp\in Y^\perp_{J^\perp}(=(Y/J)^\perp)$. Consequently, $\|x^*-\widetilde{x^*}\|\geq \|y^\perp\|$ as $Y/J$ has property-$(HB)$ in $X/J$. By contrast, $x^*=x^*-\widetilde{x^*}+\widetilde{x^*}$, where $x^*-\widetilde{x^*}\in J^\perp$, $\widetilde{x^*}\in J^\#$, and $J$ is an $M$-ideal in $X$, implying that $\|x^*\|\geq \|x^*-\widetilde{x^*}\|$ and finally $\|x^*\|\geq \|y^\perp\|$. 
\end{proof}

Combining the cases for an $L_1$-predual or an $M$-embedded space, the following can be obtained:

\begin{thm}
Let $X$ be an $L_1$-predual (or an $M$-embedded) space, and let $Y$ be a finite co-dimensional subspace of $X$. Then, $Y^{\perp\perp}$ has property-$(HB)$ in $X^{**}$ if $Y$ has property-$(HB)$ in $X$.
\end{thm}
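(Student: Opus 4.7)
The strategy is to use, in both settings, the fact that $J_{X^*}(X^*)$ is an $L$-summand of $X^{***}$ and to combine this with the finite codimensionality of $Y$ in order to lift the metric projection onto $Y^\perp$ (supplied by Theorem~\ref{T1}) to a metric projection onto $Y^{\perp\perp\perp}$ satisfying the hypotheses of Theorem~\ref{T1}(c) for $Y^{\perp\perp}$ in $X^{**}$.

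First I would record the decomposition $X^{***}=J_{X^*}(X^*)\oplus_1 W$ for a closed subspace $W\ci X^{***}$ under either hypothesis. When $X$ is $M$-embedded, $X$ is an $M$-ideal in $X^{**}$, so by definition $X^{***}=X^\perp\oplus_1 J_{X^*}(X^*)$. When $X$ is an $L_1$-predual, $X^*\cong L_1(\mu)$ is $L$-embedded in its bidual via the classical Yosida--Hewitt decomposition of $L_1(\mu)^{**}$ into countably-additive and purely-finitely-additive parts, which again identifies $J_{X^*}(X^*)$ as an $L$-summand of $X^{***}$.

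Next, the finite codimensionality of $Y$ forces $Y^\perp$ to be finite dimensional and reflexive, and a dimension count using $X^{**}/Y^{\perp\perp}\cong (Y^\perp)^*$ yields the isometric identification $Y^{\perp\perp\perp}=J_{X^*}(Y^\perp)$; in particular $Y^{\perp\perp\perp}\ci J_{X^*}(X^*)$. By Theorem~\ref{T1} applied to $Y\ci X$, there is a linear metric projection $P:X^*\to Y^\perp$ with $\|P\|=\|I-P\|=1$.

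I would then define $\widetilde P:X^{***}\to Y^{\perp\perp\perp}$ by $\widetilde P(v+w)=J_{X^*}\bigl(P(J_{X^*}^{-1}(v))\bigr)$ for $v\in J_{X^*}(X^*)$ and $w\in W$. This is a well-defined linear projection with range $Y^{\perp\perp\perp}$, and the $L$-summand identity $\|v+w-y^{***}\|=\|v-y^{***}\|+\|w\|$ for every $y^{***}\in Y^{\perp\perp\perp}\ci J_{X^*}(X^*)$ shows that $\widetilde P$ is in fact the metric projection onto $Y^{\perp\perp\perp}$, that $\|\widetilde P\|=1$, and that $\|I-\widetilde P\|=1$ as well; Chebyshev-ness of $Y^{\perp\perp\perp}$ in $X^{***}$ follows from the uniqueness of the decomposition $v+w$ combined with Chebyshev-ness of $Y^\perp$ in $X^*$. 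Invoking Theorem~\ref{T1} in the converse direction for $Y^{\perp\perp}\ci X^{**}$ then delivers property-$(HB)$. The point I expect to require most care is the uniform treatment of the two hypotheses: in the $M$-embedded case $W$ is concretely $X^\perp$, whereas in the $L_1$-predual case $W$ is the singular summand of the Yosida--Hewitt decomposition and one must use only the $L$-summand property of $J_{X^*}(X^*)$ rather than any identification of $W$ with an annihilator of $X$.
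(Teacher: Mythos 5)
Your proposal is correct and follows essentially the same route as the paper: both arguments rest on the $L$-summand decomposition $X^{***}=J_{X^*}(X^*)\oplus_1 W$, the identification $Y^{\perp\perp\perp}=J_{X^*}(Y^\perp)$ coming from finite codimensionality, and the transfer of the (linear, bi-contractive) metric projection onto $Y^\perp$ up to $Y^{\perp\perp\perp}$, followed by Theorem~\ref{T1}. The only cosmetic difference is that the paper obtains the bi-contractive projection on $X^{***}$ as $P^{**}$ and then checks only the Chebyshev condition, whereas you build the metric projection $\widetilde P$ directly from the $\oplus_1$ decomposition; the underlying mechanism is identical.
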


\begin{proof}
 Let $Y$ have property-$(HB)$ in $X$ and $P$ be the corresponding bi-contractive projection on $X^*$ with $\ker (P)=Y^\perp$. Thus $P^{**}$ is a bi-contractive projection with $\ker (P^{**})=Y^{\perp\perp\perp}$. By Theorem~\ref{T1}, it is sufficient to prove that $Y^{\perp\perp}$ has property-$(U)$ in $X^{**}$.
 
  In both cases, $X^*$ is the $L$-summand in $X^{***}$. Since $Y^\perp(= \textrm{span} \{x_i^*:1\leq i\leq n\})$ is Chebyshev in $X^*$, $Y^{\perp\perp\perp}=\textrm{span} \{\widehat{x_i^*}:1\leq i\leq n\}$ is Chebyshev in $X^{***}$, where $\widehat{x_i^*}$'s are the canonical images of $x_i^*$'s in $X^{***}$.
 Hence $Y^{\perp\perp}$ has property-$(U)$ in $X^{**}$.
\end{proof}

Our next observation follows from the definition of property-$(U)$ ($(SU)$). 

\begin{prop}\label{37}
Let $Z$ and $Y$ be the subspaces of $X$ with $Z \ci Y \ci X$. \bla
 \item If $Z$ has property-$(SU)$ in $X$, then $Z$ has property-$(SU)$ in $Y$. 
 \item If $Z$ (and $Y$) has property-$(SU)$ in $Y$ (and $X$), then $Z$ has property-$(SU)$ in $X$.
 \el
\end{prop}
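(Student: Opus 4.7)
The plan is to reduce Proposition~\ref{37} to the characterization in Theorem~\ref{T6}, namely that property-$(SU)$ is equivalent to the conjunction of property-$(U)$ and being an ideal. So for each of (a) and (b) I verify property-$(U)$ and the ideal property separately, transferring each between the three spaces $Z \subseteq Y \subseteq X$.

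For part (a), assume $Z$ has property-$(SU)$ in $X$. To get property-$(U)$ of $Z$ in $Y$, let $y_1^*, y_2^* \in Y^*$ be norm-preserving extensions of some $z^* \in Z^*$. Apply Hahn--Banach to lift each to norm-preserving extensions $x_1^*, x_2^* \in X^*$; since $\|x_i^*\| = \|y_i^*\| = \|z^*\|$ and $x_i^*|_Z = z^*$, both $x_i^*$ lie in $HB(z^*)$. By property-$(U)$ of $Z$ in $X$, $x_1^* = x_2^*$, so $y_1^* = y_2^*$. For the ideal property, define $Q : Y^* \to Y^*$ by $Q(y^*) = \tilde{z}^*|_Y$, where $\tilde{z}^* \in X^*$ is the unique norm-preserving extension of $y^*|_Z$ guaranteed by property-$(SU)$ of $Z$ in $X$. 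The identity $\tilde{z}^*|_Z = y^*|_Z$ gives $Q^2 = Q$; contractivity follows from $\|Q(y^*)\| \leq \|\tilde{z}^*\| = \|y^*|_Z\| \leq \|y^*\|$; and $\ker Q = Z^{\perp}_{Y^*}$ by direct inspection. Hence $Z$ is an ideal in $Y$, and Theorem~\ref{T6} closes part (a).

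For part (b), assume $Z$ has property-$(SU)$ in $Y$ and $Y$ has property-$(SU)$ in $X$. To verify property-$(U)$ of $Z$ in $X$, let $x_1^*, x_2^* \in HB(z^*)$ for $z^* \in Z^*$. Then $x_i^*|_Y$ are norm-preserving extensions of $z^*$ to $Y$, so $x_1^*|_Y = x_2^*|_Y$ by property-$(U)$ of $Z$ in $Y$; then $x_1^* = x_2^*$ by property-$(U)$ of $Y$ in $X$. For the ideal property, let $\phi : Y^* \to Y^{\#}_{X^*} \subseteq X^*$ be the isometric lifting (the inverse of restriction, well defined by property-$(SU)$ of $Y$ in $X$), and let $P_Y : Y^* \to Y^*$ be the contractive projection with $\ker P_Y = Z^{\perp}_{Y^*}$ witnessing that $Z$ is an ideal in $Y$. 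Define $R : X^* \to X^*$ by $R(x^*) = \phi\bigl(P_Y(x^*|_Y)\bigr)$. Using $\phi(y^*)|_Y = y^*$ one checks $R^2 = R$, and $\|R(x^*)\| \leq \|P_Y(x^*|_Y)\| \leq \|x^*|_Y\| \leq \|x^*\|$, while $R(x^*) = 0 \iff x^*|_Y \in Z^{\perp}_{Y^*} \iff x^*|_Z = 0$. Thus $\ker R = Z^{\perp}_{X^*}$ and $Z$ is an ideal in $X$; Theorem~\ref{T6} finishes part (b).

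The main subtlety I anticipate is not in the logic but in keeping the different perpendicular and $\#$-subspaces (relative to $X^*$ versus $Y^*$) straight, and in verifying that the projections $Q$ and $R$ are genuinely well defined and linear. Linearity of $Q$, for instance, hinges on the fact that $y^* \mapsto \tilde{z}^*$ is linear, which itself is a consequence of property-$(SU)$ of $Z$ in $X$ via the characterization that $Z^{\#}_{X^*}$ is a linear subspace (cf.\ the proof of Theorem~\ref{T6}); if one tried to work directly from property-$(U)$ alone this step would fail, which is precisely why the ideal hypothesis is built into property-$(SU)$.
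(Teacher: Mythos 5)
Your argument is correct, and it is worth noting that the paper itself offers no proof of Proposition~\ref{37}: it merely asserts that the statement ``follows from the definition'' of property-$(U)$/$(SU)$. Your route through Theorem~\ref{T6} --- splitting property-$(SU)$ into property-$(U)$ plus the ideal property and transferring each piece separately --- is a legitimate and clean way to make that assertion precise. The property-$(U)$ transfers in both (a) and (b) are exactly the standard Hahn--Banach lifting/restriction arguments, and the two projections $Q(y^*)=\widetilde{z^*}|_Y$ and $R(x^*)=\phi\bigl(P_Y(x^*|_Y)\bigr)$ are well defined, idempotent, contractive, and have the right kernels as you verify; your closing remark correctly identifies that linearity of these maps is where property-$(SU)$ (equivalently, linearity of $Z^{\#}$) is genuinely used and where property-$(U)$ alone would not suffice. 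An alternative, arguably closer to what the authors had in mind, is to work directly with Oja's criterion that property-$(SU)$ is equivalent to linearity of the $\#$-set: for (a) one checks that $Z^{\#}_{Y^*}$ is the image of $Z^{\#}_{X^*}$ under restriction to $Y$ (using uniqueness of extensions, which you establish anyway), and for (b) that $Z^{\#}_{X^*}=\phi\bigl(Z^{\#}_{Y^*}\bigr)$; this avoids constructing the projections explicitly but is essentially the same computation. The only cosmetic omission in your write-up is that in part (b), before invoking property-$(U)$ of $Y$ in $X$, one should note that $\|x_i^*\|=\|z^*\|\leq\|x_i^*|_Y\|\leq\|x_i^*\|$, so that $x_1^*,x_2^*$ really are norm-preserving extensions of the common restriction $x_1^*|_Y$; this is immediate and does not affect correctness.
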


\begin{thm}\label{T28}
Let $Z$ and $Y$ be subspaces of $X$ with $Z \ci Y \ci X$. If $Z$ has property-$(HB)$ in $X$, then $Z$ has property-$(HB)$ in Y.
\end{thm}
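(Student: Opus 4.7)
The plan is to invoke Theorem~\ref{T1}: property-$(HB)$ of $Z$ in $X$ is equivalent to property-$(U)$ of $Z$ in $X$ together with the existence of a projection $P:X^*\to X^*$ onto $Z^\perp_{X^*}$ satisfying $\|P\|=\|I-P\|=1$. By Proposition~\ref{37}(a) (using that property-$(HB)$ implies property-$(SU)$), $Z$ has property-$(SU)$ in $Y$, and in particular property-$(U)$ in $Y$. It therefore remains to produce a bi-contractive projection $Q:Y^*\to Y^*$ onto $Z^\perp_{Y^*}$; then Theorem~\ref{T1} applied at the $Y$-level will give the conclusion.

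The key observation is that since $Z\ci Y$, we have $Y^\perp_{X^*}\ci Z^\perp_{X^*}=P(X^*)$, so $P$ restricts to the identity on $Y^\perp_{X^*}$. This lets me define $Q:Y^*\to Y^*$ by $Q(y^*):=P(\widetilde{y^*})|_Y$, where $\widetilde{y^*}\in X^*$ is any norm-preserving Hahn--Banach extension of $y^*$: two such extensions differ by an element of $Y^\perp_{X^*}$, which $P$ fixes and which restricts to zero on $Y$, so $Q$ is independent of the chosen extension and is linear. Routine verifications then yield $Q^2=Q$, $Q(Y^*)\ci Z^\perp_{Y^*}$, and $Q|_{Z^\perp_{Y^*}}=I$; the bi-contractivity follows from $\|Q(y^*)\|\leq \|P\|\,\|\widetilde{y^*}\|=\|y^*\|$ together with the identity $(I-Q)(y^*)=((I-P)\widetilde{y^*})|_Y$, which gives $\|(I-Q)(y^*)\|\leq \|I-P\|\,\|\widetilde{y^*}\|=\|y^*\|$.

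The step that requires the most attention is confirming that $Q$ is well-defined: the whole construction rests on the identity $P|_{Y^\perp_{X^*}}=I$, which is a free consequence of the inclusion $Z\ci Y$ combined with $Z^\perp_{X^*}=P(X^*)$. Once this invariance is noted, everything else is a formal computation, and the conclusion follows by invoking Theorem~\ref{T1} one more time in the $(b)\Ra (a)$ direction at the $Y$-level.
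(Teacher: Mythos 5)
Your proof is correct; it reaches the conclusion by a different route than the paper, although the same restriction mechanism underlies both. The paper's proof is pointwise: after obtaining property-$(SU)$ of $Z$ in $Y$ from Proposition~\ref{37}, it takes $y^*\in Y^*$ and an extension $x^*\in HB(y^*)$, writes $x^*=x^\#+x^\perp$ with $x^\#\in Z^\#_{X^*}$ and $x^\perp\in Z^\perp_{X^*}$, observes that restricting this to $Y$ must (by uniqueness of the decomposition at the $Y$-level) give $y^\perp=x^\perp|_Y$, and concludes $\|y^*\|=\|x^*\|\geq\|x^\perp\|\geq\|x^\perp|_Y\|=\|y^\perp\|$, verifying the definition of property-$(HB)$ directly. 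You globalize the same computation: your $Q(y^*)=P(\widetilde{y^*})|_Y$ is precisely the assignment $y^*\mapsto y^\perp$ of the paper's argument, packaged as a linear bi-contractive projection onto $Z^\perp_{Y^*}$ so that Theorem~\ref{T1} can be invoked. Your well-definedness observation --- that $P$ fixes $Y^\perp_{X^*}$ because $Y^\perp_{X^*}\ci Z^\perp_{X^*}=P(X^*)$ --- is the one genuinely new ingredient and it is sound; note only that linearity of $Q$ rests on the formula being independent of the choice of \emph{any} extension of $y^*$ (not merely the norm-preserving ones), which follows from the same invariance since a sum of norm-preserving extensions need not be norm-preserving. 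The paper's route is shorter because the uniqueness of the decomposition coming from property-$(SU)$ in $Y$ spares it from checking well-definedness and linearity of a global map; yours has the mild advantage of exhibiting the bi-contractive projection on $Y^*$ concretely as the push-down of the one on $X^*$.
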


\begin{proof}
Suppose that $Z$ has property-$(HB)$ in $X$. From Proposition \ref{37}, $Z$ has property-$(SU)$ in $Y$. Let $y^*\in Y^*$ and $x^*\in HB(y^*)$. Let $y^*=y^{\#}+y^{\perp}$, where $y^{\#}\in Z^{\#}_{Y^*}$ and $y^{\perp}(\neq 0)\in Z^{\perp}_{Y^*}$. It remains to be shown that $\|y^*\|\geq \|y^{\perp}\|$. Because $Z$ has property-$(HB)$ in $X$, we have $\|x^*\|>\|x^{\#}\| $ and $\|x^*\|\geq \|x^{\perp}\|$, where $x^*=x^{\#}+x^{\perp}$, $x^{\#}\in Z^{\#}_{X^*}$, $x^{\perp}(\neq 0)\in Z^{\perp}_{X^*}$. Now, $y^*=x^*|_Y=x^{\#}|_Y+x^{\perp}|_Y$. Since $x^{\perp}|_Y \in Z^{\perp}_{Y^*}$ and $x^\#|_Y\in Z^\#_{Y^*}$, because of the uniqueness of the decomposition of $y^*$, we have $y^{\#}=x^{\#}|_Y$ and $y^{\perp}=x^{\perp}|_Y$. Hence, $\|y^*\|=\|x^*\|\geq \|x^{\perp}\|\geq \|x^{\perp}|_Y\|=\|y^{\perp}\|$.
\end{proof}

\begin{rem}
Example~\ref{S1} shows that even if $Y$ has property-$(SU)$ in $X$, the converse of Theorem~\ref{T28} may not be true. 
\end{rem}

Suppose that $X$ has property-$(U)$ in $X^{**}$. Our next theorem provides a sufficient condition for subspaces to have property-$(HB)$ in their biduals. Note that $Y$ has property-$(HB)$ in $X$ if $Y$ has property-$(HB)$ in $X^{**}$, following from Theorem~\ref{T28}.

\begin{thm}
Let $Y$ be a subspace of $X$, and let $X$ have property-$(U)$ in $X^{**}$. If $Y$ has property-$(HB)$ in $X$, then $Y$ has property-$(HB)$ in $Y^{**}$.
\end{thm}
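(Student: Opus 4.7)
The plan is to reduce the theorem to proving that $Y$ has property-$(HB)$ in $X^{**}$, and then to invoke Theorem~\ref{T28} applied to the chain $Y\subseteq Y^{**}\subseteq X^{**}$ (identifying $Y^{**}$ isometrically with $Y^{\perp\perp}\subseteq X^{**}$) to descend to property-$(HB)$ of $Y$ in $Y^{**}$. By Theorem~\ref{T1}$(b)$, the intermediate claim splits into (i) property-$(U)$ of $Y$ in $X^{**}$, and (ii) the existence of a bi-contractive projection on $X^{***}$ whose range is $Y^\perp_{X^{***}}$.

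For (i), I would chain the two hypotheses. Given $\Phi_1,\Phi_2\in X^{***}$ both norm-preserving extensions of some $y^*\in Y^*$, the restriction $\Phi_i|_X$ has norm $\|y^*\|$ (sandwiched between $\|\Phi_i|_Y\|=\|y^*\|$ and $\|\Phi_i\|=\|y^*\|$), so both lie in $HB(y^*)\subseteq X^*$; property-$(U)$ of $Y$ in $X$ forces $\Phi_1|_X=\Phi_2|_X=:x^*$, and then property-$(U)$ of $X$ in $X^{**}$ applied to the norm-preserving extensions $\Phi_1,\Phi_2$ of $x^*$ gives $\Phi_1=\Phi_2$. For (ii), let $P:X^*\to X^*$ be the bi-contractive projection from property-$(HB)$ of $Y$ in $X$, with range $Y^\perp$ and $G:=(I-P)(X^*)$, and set
\[
\Pi(\phi)=\phi-\widehat{(I-P)(\phi|_X)},\qquad \phi\in X^{***}.
\]
A routine check shows that $\Pi$ is a projection with range $Y^\perp_{X^{***}}$ and kernel $\widehat{G}$, and $\|I-\Pi\|\le 1$ is automatic from $\|(I-\Pi)(\phi)\|=\|(I-P)(\phi|_X)\|\le\|\phi\|$.

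The main obstacle is showing $\|\Pi\|\le 1$. My approach is to exploit the adjoint bi-contractive splitting $X^{**}=G^\perp\oplus Y^{\perp\perp}$ (bi-contractive because it is the dual of $X^*=Y^\perp\oplus G$): for $x^{**}\in B_{X^{**}}$, write $x^{**}=u+v$ with $u=P^*x^{**}\in G^\perp$ and $v=(I-P^*)x^{**}\in Y^{\perp\perp}$, each of norm at most $1$. A direct calculation then yields $\Pi(\phi)(x^{**})=\phi(u)+(\psi-\widehat{\psi|_Y})(\tilde v)$, where $\psi=\phi|_{Y^{**}}\in Y^{***}$ and $\tilde v\in B_{Y^{**}}$ corresponds to $v$ under the canonical identification $Y^{\perp\perp}\cong Y^{**}$. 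Property-$(U)$ of $X$ in $X^{**}$ enters here through Theorem~\ref{T6}: since $X$ is always an ideal in $X^{**}$, property-$(U)$ upgrades to property-$(SU)$ of $X$ in $X^{**}$, so the Dixmier-type projection $\phi\mapsto\widehat{\phi|_X}$ is the unique linear metric projection onto $\widehat{X^*}$; this rigidity, combined with the bi-contractive splitting above, is what I expect to close the inequality $\|\Pi\|\le 1$. Once this is verified, Theorem~\ref{T28} applied to $Y\subseteq Y^{**}\subseteq X^{**}$ completes the proof.
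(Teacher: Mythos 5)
Your overall strategy (establish property-$(HB)$ of $Y$ in $X^{**}$ and descend via Theorem~\ref{T28} applied to $Y\ci Y^{\perp\perp}\ci X^{**}$) is sound, and step (i) — chaining the two property-$(U)$ hypotheses through the restriction $\Phi_i|_X$ — is correct. The gap is in step (ii): the inequality $\|\Pi\|\le 1$ is the entire content of the bi-contractivity claim, and you leave it unproven. The sketched route does not close it: your identity $\Pi(\phi)(x^{**})=\phi(u)+(\psi-\widehat{\psi|_Y})(\tilde v)$ is correct, but estimating it gives a sum of two terms each individually bounded by a multiple of $\|\phi\|$, and turning that into $|\Pi(\phi)(x^{**})|\le\|\phi\|$ would require an $\ell_1$-type splitting of $X^{**}=G^\perp\oplus Y^{\perp\perp}$ that the hypotheses do not provide; the appeal to the ``rigidity'' of the metric projection onto $\widehat{X^*}$ does not interact with this estimate in any visible way.

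The repair is much simpler than your $\Pi$, and is essentially what the paper does. Take the bi-contractive projection $P:X^*\ra Y^\perp$ from property-$(HB)$ of $Y$ in $X$ and pass to the second adjoint: $P^{**}:X^{***}\ra X^{***}$ is a projection with range $(\ker P^*)^\perp=Y^{\perp\perp\perp}=Y^\perp_{X^{***}}$ and, since adjoints preserve norms, $\|P^{**}\|=\|I-P^{**}\|=1$ automatically. Together with your step (i), Theorem~\ref{T1}$(b)$ then gives property-$(HB)$ of $Y$ in $X^{**}$ with no further work. (If you insist on your $\Pi$, note that property-$(U)$ of $Y$ in $X^{**}$ forces any projection onto $Y^\perp_{X^{***}}$ with contractive complement to have complement equal to the set of unique norm-preserving extensions; since both $\Pi$ and $P^{**}$ have contractive complements, $\Pi=P^{**}$ and the bound follows — but then $\Pi$ was never needed.) The paper itself skips the intermediate space $X^{**}$ entirely: it identifies $Y^{***}$ with $Y^{\#\perp\perp}\oplus Y^\perp_{Y^{***}}$ inside $X^{***}$ and reads off $\|y^\perp\|=\|(I-P^{**})y^{***}\|\le\|y^{***}\|$ directly from the bi-contractivity of $P^{**}$, so the second adjoint is the key device in both arguments; your version additionally routes through Theorem~\ref{T28}, which is fine once step (ii) is fixed.
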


\begin{proof}
Clearly, $Y$ has property-$(SU)$ in $Y^{**}$. Let $y^{***}\in Y^{***}$ and suppose $y^{***}=y^\#+y^\perp$ where $y^\#\in Y^\#_{Y^{***}}$ and $y^\perp\in Y^\perp_{Y^{***}}$. It remains to be shown that $\|y^{***}\|\geq \|y^\perp\|$. Let $P:X^*\ra Y^\#$ be a bi-contractive projection with $\ker (P)=Y^\perp$. Hence, $P^{**}:X^{***}\ra Y^{\#\perp\perp}$ is a bi-contractive projection with $\ker (P^{**})=Y^{\perp\perp\perp}$. As $Y$ has property-$(HB)$ in $X$, $Y^*$ and $Y^{***}$ are subspaces of $X^*$ and $X^{***}$, respectively. Thus, $y^{***}\in X^{***}$, $y^\#\in Y^\#_{Y^{***}}\cong Y^*\cong Y^{\#}_{X^*}\ci Y^{\#\perp\perp}_{X^{***}}$ and $y^\perp\in Y^\perp_{Y^{***}}\ci  Y^{\perp\perp\perp}_{X^{***}}$. Hence, $\|y^\perp\|= \|(I-P^{**})y^{***}\|\leq \|y^{***}\|$.
\end{proof}

\subsection{Examples} We begin with the following example, which ensures that the property-$(wU)$ is strictly weaker than the property-$(U)$.

\begin{ex}
Let $X$ be a non-reflexive Banach space such that $X^*$ is separable. Let $\mathcal{N}(X)$ be the set of all
equivalent norms on $X$ , equipped with the topology of uniform convergence on bounded subsets
of X . Suppose $M$ be a Polish space and $A$ and $B$ be two analytic subsets of $M$ with $A\ci B$.
By \cite[Theorem~3]{GYK}, there exists a continuous map $\Phi:M\ra \mathcal{N}(X)$ such that $(X,\Phi(t))$ is smooth Banach space but it dual is not a strictly convex Banach space, for $t\in B\sm A$, where $(X,\Phi(t))$ denotes the Banach $X$ endowed with the norm $\Phi(t)$.

Since the dual space $(X^*,\Phi(t)^*)$ is not strictly convex there exists $x^*\in X^*$ such that $\{tx^*:t\in\mb{R}\}$ is not a Chebyshev subspace. Hence $ker (x^*)$ can not have property-$(U)$ in $(X,\Phi(t))$ but any subspace has property-$(wU)$ in $(X,\Phi(t))$ (see \cite[Theorem~2.4]{LA}).
\end{ex}

We now give a few examples that have property-$(SU)$ but not $(HB)$. 

\begin{thm}\label{T17}
	Let $(a_n)\in S_{\ell_1}$ such that $1>\sup_{n\in \mathbb{N}}|a_n|>\frac{1}{2}$. Then, $\ker((a_n))\ci c_0$ has property-$(SU)$ but does not have property-$(HB)$ in $c_0$.
\end{thm}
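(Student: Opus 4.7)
The plan is to identify $c_0^* \cong \ell_1$, note that $Y^\perp = \mathbb{R}a$ is one-dimensional, and then read off both properties from Theorem~\ref{T6}(c) and Theorem~\ref{T1}(c) by explicitly computing the metric projection onto $Y^\perp$. Because $a \in \ell_1 \subset c_0$, the quantity $\alpha := \sup_n |a_n|$ is actually attained, and the hypothesis $\alpha > 1/2$ together with $\|a\|_1 = 1$ forces a unique maximizing coordinate (two indices of modulus exceeding $1/2$ would push $\|a\|_1$ beyond $1$). After relabeling and changing signs, I may assume $a_1 = \alpha$.

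For property-$(SU)$, I would pick an arbitrary $g = (g_n) \in \ell_1$ and study the convex piecewise-affine function $h(t) = \|g - ta\|_1$. Setting $t_0 := g_1/a_1$, the one-sided derivatives split into the first-coordinate contribution $\pm a_1$ plus the remainder, whose absolute value is bounded by $\sum_{n \geq 2}|a_n| = 1 - \alpha$. Thus
\[
 D^- h(t_0) \leq 1 - 2\alpha < 0, \qquad D^+ h(t_0) \geq 2\alpha - 1 > 0.
\]
By convexity, this forces $t_0$ to be the \emph{unique} minimizer, so $P_{Y^\perp}(g) = (g_1/a_1)\, a$, which is manifestly linear in $g$. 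Hence $Y^\perp$ is Chebyshev in $\ell_1$ with a linear metric projection, and Theorem~\ref{T6}(c) yields property-$(SU)$.

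For the failure of property-$(HB)$, Theorem~\ref{T1}(c) would require in addition $\|P_{Y^\perp}\| = 1$. Testing the formula above on the first unit vector,
\[
 \|P_{Y^\perp}(e_1)\|_1 = \bigl\|(1/a_1)\,a\bigr\|_1 = 1/\alpha > 1 = \|e_1\|_1,
\]
since $\alpha < 1$. So $P_{Y^\perp}$ is not a contraction and property-$(HB)$ fails.

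The only delicate point I anticipate is the case where $t_0 = g_1/a_1$ happens to coincide with some $g_n/a_n$ for $n \geq 2$; at such coincidences the corresponding index contributes a jump $\pm|a_n|$ to the one-sided derivatives of $h$. However, since $\sum_{n \geq 2}|a_n| = 1 - \alpha < \alpha = a_1$, the total jump remains strictly dominated by the contribution from the first coordinate, so the strict inequalities $D^- h(t_0) < 0 < D^+ h(t_0)$ persist and the uniqueness argument goes through unchanged.
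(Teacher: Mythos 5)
Your proposal is correct and follows essentially the same route as the paper: both arguments reduce to computing the metric projection onto the line $Y^\perp=\mathbb{R}a$ in $\ell_1$, observing that $g\mapsto (g_N/a_N)a$ is linear (hence property-$(SU)$ via Theorem~\ref{T6}$(c)$) but has norm $1/\alpha>1$ (hence property-$(HB)$ fails via Theorem~\ref{T1}$(c)$). The only difference is that you derive the projection formula directly by the one-variable convexity argument, including the correct handling of coincident kink points, whereas the paper cites it from \cite{DPR}; your version is self-contained and equally valid.
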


\begin{proof}
	Let $X=c_0$, $Y=\ker ((a_n))$. Because $\sup_{n\in \mathbb{N}}|a_n|>\frac{1}{2}$, there exists a unique $N\in \mathbb{N}$ such that $|a_N|>\frac{1}{2}$. By \cite[Theorem 6.6]{DPR}, the metric projection $P_{Y^\perp}:X^*\ra Y^{\perp}$ is defined as $$P_{Y^\perp}((x_n))=\{\frac{x_N}{a_N}(a_1,a_2,a_3,\ldots)\}\text{ for }(x_n)\in \ell_1.$$ Clearly, $P_{Y^\perp}$ is linear and $\|I-P_{Y^\perp}\|=1$. If we choose $(x_n)\in S_{\ell_1}$ such that $|x_N|>|a_N|$, then $\|P_{Y^\perp}(x_n)\|>1=\|(x_n)\|_1$, i.e., $\|P_{Y^\perp}\|>1$. Hence, by Theorem~\ref{T1}.$(c)$, $Y$ does not have property-$(HB)$ in $c_0$.
\end{proof}

From Theorem~\ref{T17} it is clear that if $\ker((a_n))$ has property-$(HB)$ in $c_0$ then $\sup_{n\in \mathbb{N}}|a_n|=1$ which in turn $(a_n)=e_m$ for some $m$. Hence we have the following.

\begin{cor}
	Let $Y$ be a hyperplane in $c_0$. $Y$ has property-$(HB)$ in $c_0$ if and only if $Y$ is an $M$-summand in $c_0$.
\end{cor}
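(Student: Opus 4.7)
The backward direction is immediate from material already in the paper: every $M$-summand of $c_0$ is in particular an $M$-ideal, and every $M$-ideal has property-$(HB)$. I therefore focus on the forward direction.

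Suppose $Y = \ker((a_n))$ for some $(a_n)\in S_{\ell_1}$ has property-$(HB)$ in $c_0$; my aim is to show $(a_n) = \pm e_m$ for some $m$, for then $Y = \{x\in c_0 : x_m = 0\}$ is manifestly an $M$-summand. Setting $M := \sup_n |a_n|$, the plan is to split into cases. The range $\frac{1}{2} < M < 1$ is ruled out by Theorem~\ref{T17}. The value $M = 1$ combined with $\sum_n |a_n| = 1$ forces $(a_n) = \pm e_m$, yielding the conclusion. What remains is the range $M \leq \frac{1}{2}$, which Theorem~\ref{T17} does not cover directly.

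For this residual case I would use Theorem~\ref{T1}$(c)$: property-$(HB)$ guarantees that $Y^\perp = \textrm{span}\{(a_n)\}$ is Chebyshev in $\ell_1$ and that the metric projection $P_{Y^\perp}$ is linear of norm~$1$. Writing $P_{Y^\perp}(x) = t^*(x)\,(a_n)$, I would compute $t^*(e_k)$ at each standard basis vector $e_k$ of $\ell_1$ by minimising the piecewise-linear map
\[
t \longmapsto \|e_k - t(a_n)\|_1 = |1 - t a_k| + |t|\,(1 - |a_k|).
\]
A direct slope analysis then shows that when $|a_k|<\frac{1}{2}$ (or $a_k = 0$) the unique minimiser is $t = 0$, while $|a_k| = \frac{1}{2}$ produces an entire interval of minimisers, contradicting the Chebyshev hypothesis. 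Hence under $M \leq \frac{1}{2}$ one has $t^*(e_k) = 0$ for every $k$. Combined with linearity of $t^*$ and the density of the finitely supported sequences in $\ell_1$, this yields $t^* \equiv 0$, so $P_{Y^\perp} \equiv 0$, contradicting the fact that $(a_n)$ is a nonzero fixed point of $P_{Y^\perp}$.

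The main obstacle is precisely this residual range $M \leq \frac{1}{2}$: Theorem~\ref{T17} gives no information there, so one must extract the conclusion from the linear/Chebyshev structure imposed by Theorem~\ref{T1}, and the argument hinges on the observation that a linear contractive metric projection onto a one-dimensional subspace of $\ell_1$ is entirely determined by its values on the canonical basis.
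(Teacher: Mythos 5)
Your proof is correct, and its skeleton matches the paper's: the backward direction via ``$M$-summand $\Rightarrow$ $M$-ideal $\Rightarrow$ property-$(HB)$'', and the forward direction by normalising $Y=\ker((a_n))$ with $(a_n)\in S_{\ell_1}$, invoking Theorem~\ref{T17} to exclude $\frac12<\sup_n|a_n|<1$, and observing that $\sup_n|a_n|=1$ together with $\sum_n|a_n|=1$ forces $(a_n)=\pm e_m$. Where you genuinely go beyond the paper is the residual range $\sup_n|a_n|\leq\frac12$: the paper's justification is the single sentence ``From Theorem~\ref{T17} it is clear that \dots $\sup_n|a_n|=1$'', which strictly speaking only rules out the middle range (the low range is presumably being absorbed into the cited characterization of property-$(SU)$ for hyperplanes of $c_0$ from \cite{DPR}, but this is not said). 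Your explicit argument closes that gap cleanly: the slope analysis of $t\mapsto|1-ta_k|+|t|(1-|a_k|)$ shows $|a_k|=\frac12$ destroys the Chebyshev property outright, while $|a_k|<\frac12$ for all $k$ forces the (by Theorem~\ref{T1}$(c)$ linear and norm-one, hence continuous) metric projection to vanish on the dense span of the $e_k$ yet fix $(a_n)\neq 0$ --- a contradiction. So your write-up is not just correct but is arguably the more complete of the two; the only point worth making explicit is that the continuity needed to pass from ``$t^*(e_k)=0$ for all $k$'' to ``$t^*\equiv 0$'' is exactly what the norm-one clause of Theorem~\ref{T1}$(c)$ supplies.
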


\begin{ex}\label{E2}
Let $m,n\in \mathbb{N}$, and $m<n$. Then, the subspace $\textrm{span}~\{e_1,\dots,e_m\}$ has property-$(SU)$ but does not have property-$(HB)$ in $(\mathbb{R}^n,\|.\|_*)$, where $\|(x_i)_{i=1}^{n}\|_*=\max\{|x_1|,\dots,|x_{n-1}|,\frac{|x_1+\dots+x_n|}{n}\}$ for $(x_i)_{i=1}^{n}\in \mathbb{R}^n$.
\end{ex}

\begin{proof}
Let $Y=\textrm{span}~\{e_1,\dots,e_m\}$ and $X=(\mathbb{R}^n,\|.\|_*)$. It is easy to observe that $X^*=(\mathbb{R}^n,\|.\|^*)$, where
$\|(a_i)_{i=1}^n\|^*=\sum_{i=1}^{n-1}|a_i-a_n|+n|a_n|$ for $(a_i)_{i=1}^n\in \mathbb{R}^n$.

Let $G=\textrm{span}~\{e_1,\dots,e_m\}$ be a subspace of $X^*$. It is evident that $X^*=G\oplus Y^\perp$. In fact, $(a_i)_{i=1}^n=(a_1,\dots,a_m,0,\dots,0)+(0,\dots,0,a_{m+1},\dots,a_n)$ for an element $(a)_{i=1}^{n}\in X^*$, where $(a_1,\dots,a_m,0,\dots,0)\in G$ and $(0,\dots,0,a_{m+1},\dots,a_n)\in Y^\perp$. Using the triangle inequality, we have $\|(a_1,\dots,a_m,0,\dots,0)\|^*\leq \|(a_1,\ldots, a_n)\|^*$ and obtain a strict inequality if $a_j$'s are not all zeros for $m< j\leq n$. Thus, $Y$ has property-$(SU)$ in $X$.

Consider the decomposition $(b_i)_{i=1}^n=(b_1,\dots,b_m,0,\dots,0)+(0,\dots,0,b_{m+1},\dots,b_n)$, where $b_i=1$, $1\leq i\leq n$, but $\|(b_i)_{i=1}^n\|^*=n<n+m=\|(0,\dots,0,b_{m+1},\dots,b_n)\|^*$. Hence, $Y$ does not have property-$(HB)$.
\end{proof}

 We conclude this section by showing that property-$(HB)$ cannot be transferred via property-$(SU)$ in general. 
  
\begin{ex}\label{S1}
 Let $X=(\mathbb{R}^3,\|.\|_\iy)$ and $Y=\{(x,y,z)\in \mathbb{R}^3:x+y+6z=0\}$, then $Y^{\#}=\{(x,y,0):x,y\in \mathbb{R}\}\ci (\mathbb{R}^3,\|.\|_1)$. Because $Y^{\#}$ is linear, $Y$ has property-$(SU)$ in $X$. Let $Z=\ker ((1,0,0))\ci Y$, i.e., $Z=\{(0,y,z):y+6z=0\}$. Clearly, $Z_{Y^*}^\perp=\emph{span}\{(1,0,0)\}$ is an $L$-summand in $Y^*$. Hence, $Z$ is an M-ideal in $Y$, and $Y$ has property-$(SU)$ in $X$. It is clear that plane $Z$ does not intersect the upper and bottom surfaces of the unit ball $B_{(\mathbb{R}^3,\|.\|_\infty)}$.  Thus, $\emph{ext}(B_Z)=\{\pm(0,1,-\frac{1}{6})\}$, and we can show that $Z^{\#}=\{(0,y,0):y\in \mathbb{R}\}$. An easy calculation indicates that $Z^{\perp}_{X^*}=\emph{span}\{(1,0,0),(0,1,6)\}$. Let $(0,0,1)\in X^*$. Then, the unique decomposition of $(0,0,1)$ is $$(0,0,1)=(0,-\frac{1}{6},0)+(0,\frac{1}{6},1),$$
 and $\|(0,\frac{1}{6},1)\|_1=\frac{7}{6}>1=\|(0,0,1)\|_1$. Hence, $Z$ does not have property-$(HB)$ in $X$.
\end{ex}

\section{On property-$(k-U)$ in Banach spaces}

\subsection{Characterization} We derive a characterization for reflexive subspaces with property-$(k-U)$ in Theorem~\ref{T5}. Let $x\in X\sm\{0\}$ and $x^*\in X^*\sm \{0\}$. Let us recall from Section ~1 that
$S(X^*,x)=\{x^*\in S_{X^*}:x^*(x)=\|x\|\}$; we also define  
 $S(X,x^*)=\{x\in S_X:x^*(x)=\|x^*\|\}$. 

\begin{prop}{\label{T2}}
	Let $Y$ be a subspace of $X$. If $Y$ is a $k$-Chebyshev subspace of $X$, then $ \dim(\{S(X,y^\perp)- x\}\cap Y)< k$ for all $x\in S(X,y^\perp)$, for all $y^\perp\in S_{Y^\perp}$.
\end{prop}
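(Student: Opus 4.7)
The plan is to identify the set $\{S(X,y^\perp)-x\}\cap Y$ with (up to sign) the metric projection $P_Y(x)$, and then invoke the $k$-Chebyshev hypothesis directly.

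The first step is to pin down the distance $d(x,Y)$. Since $y^\perp\in Y^\perp$ with $\|y^\perp\|=1$, for every $y\in Y$ one has $|y^\perp(x-y)|=|y^\perp(x)|=1$, so $\|x-y\|\geq 1$; on the other hand $\|x-0\|=\|x\|=1$ because $x\in S_X$. Hence $d(x,Y)=1$ and in particular $0\in P_Y(x)$.

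The second step is a direct rewriting of the intersection. An element $z\in\{S(X,y^\perp)-x\}\cap Y$ has the form $z=x'-x$ with $x'\in S(X,y^\perp)$ and $z\in Y$; equivalently, $x'=x+z$ with $z\in Y$ and $\|x+z\|=1$. The second condition in ``$x'\in S(X,y^\perp)$'', namely $y^\perp(x+z)=1$, is automatic because $y^\perp(z)=0$ and $y^\perp(x)=1$. Conversely, any $z\in Y$ with $\|x+z\|=1$ produces such an $x'=x+z$. Therefore
\[
\{S(X,y^\perp)-x\}\cap Y=\{z\in Y:\|x+z\|=1\}=\{z\in Y:\|x-(-z)\|=d(x,Y)\}=-P_Y(x),
\]
using that $d(x,Y)=1$ from the first step.

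The conclusion is then immediate: by the $k$-Chebyshev assumption on $Y$, $\dim P_Y(x)\leq k-1$, and the dimension of a set (as defined in Section~1 via the span of its translates) is invariant under the sign change $z\mapsto -z$, so $\dim\bigl(\{S(X,y^\perp)-x\}\cap Y\bigr)=\dim P_Y(x)\leq k-1<k$. There is no real obstacle; the only subtle point is noticing that the set of interest is precisely a metric projection, which hinges on the small calculation that $d(x,Y)=1$ whenever $x\in S(X,y^\perp)$ for $y^\perp\in S_{Y^\perp}$.
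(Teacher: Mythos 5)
Your proof is correct and rests on the same key computation as the paper's: for $x\in S(X,y^\perp)$ one has $\|x-y\|\geq y^\perp(x-y)=1=\|x\|$ for all $y\in Y$, so $d(x,Y)=\|x\|$ and membership in $\{S(X,y^\perp)-x\}\cap Y$ amounts to being (the negative of) a best approximant. The only cosmetic difference is that you prove the exact identity $\{S(X,y^\perp)-x\}\cap Y=-P_Y(x)$ and conclude directly, whereas the paper argues by contraposition using just the inclusion it needs.
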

\begin{proof} 
	If possible, let $y^\perp\in S_{Y^\perp}$ such that $\dim (\{S(X,y^\perp)- x\}\cap Y)\geq k$ for some $x\in S(X,y^\perp)$. Let $y_1,y_2,\dots,y_k\in \{S(X,y^\perp)- x\}\cap Y$ be $k$ linearly independent elements. Since $x,y_i+x\in S(X,y^\perp)$, $1\leq i\leq k$, we have $\|y_i+x\|=1=\|x\|$. Hence, for $y\in Y$, $\|x-y\|\geq y^\perp(x-y)=y^\perp(x)=\|y^\perp\|=1=\|x\|$, which is true for all $y\in Y$. Consequently, $d(x,Y)=\|x\|=\|x+y_i\|$, that is, $0,-y_i\in P_Y(x)$ for $1\leq i\leq k$. Therefore, $Y$ is not a $k$-Chebyshev subspace of $X$.
\end{proof}

The converse of the aforementioned theorem is true for the proximal subspace of $X$. We first prove a similar result for the hyperplanes in $X$.

\begin{prop} \label{P2}
	Let $Y=\ker (x^*)$ be a proximinal hyperplane of $X$ for some $x^*\in S_{X^*}$. If $Y$ is not a $k$-Chebyshev subspace of $X$, then $\dim (\{S(X,x^*)- x\}\cap Y)\geq k$ for some $x\in S(X,x^*)$.
\end{prop}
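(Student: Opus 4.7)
The plan is to convert the failure of $k$-Chebyshevness at some point $z\in X$ into $k$ linearly independent directions inside $(S(X,x^*)-x)\cap Y$, exploiting the very simple structure of distance to a hyperplane.

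Since $Y$ is not $k$-Chebyshev, there exists $z\in X$ with $\dim P_Y(z)\geq k$, so one can pick $y_0,y_1,\dots,y_k\in P_Y(z)$ such that $v_i:=y_i-y_0$ $(1\leq i\leq k)$ are linearly independent elements of $Y$. Set $w:=z-y_0$. Then $\|w\|=d(z,Y)=d(w,Y)$ and, for every $i$, $\|w-v_i\|=\|z-y_i\|=d(z,Y)=\|w\|$. Note $w\neq 0$, since otherwise $P_Y(z)=\{z\}$ would force $v_i=0$.

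Next I would use that $Y=\ker(x^*)$ is a \emph{proximinal} hyperplane with $\|x^*\|=1$, which gives $d(w,Y)=|x^*(w)|$. Combined with the previous step, $|x^*(w)|=\|w\|>0$. Replacing $w$ by $-w$ (and each $v_i$ by $-v_i$) if necessary, we may assume $x^*(w)=\|w\|$. Put $x:=w/\|w\|$; then $x\in S(X,x^*)$. For every $1\leq i\leq k$, $x^*(w-v_i)=x^*(w)=\|w\|=\|w-v_i\|$, hence $(w-v_i)/\|w\|\in S(X,x^*)$, and therefore
\[
-\tfrac{v_i}{\|w\|}=\frac{w-v_i}{\|w\|}-x\in \{S(X,x^*)-x\}\cap Y.
\]

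Since $v_1,\dots,v_k$ are linearly independent in $Y$, so are $-v_1/\|w\|,\dots,-v_k/\|w\|$. As $0=x-x\in\{S(X,x^*)-x\}\cap Y$, by the paper's convention $\dim(\{S(X,x^*)-x\}\cap Y)=\dim\operatorname{span}(\{S(X,x^*)-x\}\cap Y)\geq k$, which is the desired conclusion. The only subtlety is ensuring $w\neq 0$ and choosing the correct sign so that $x\in S(X,x^*)$; these are immediate from the proximinality of the hyperplane $Y$ and the definition of $P_Y(z)$.
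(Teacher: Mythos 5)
Your argument is correct and follows essentially the same route as the paper's proof: translate the bad point by an element of its metric projection so that $0$ becomes a best approximant, use the hyperplane distance formula $d(w,Y)=|x^*(w)|$ to place the normalized point in $S(X,x^*)$, and observe that the (shifted, rescaled) best approximants land in $\{S(X,x^*)-x\}\cap Y$. The paper simply asserts the existence of the normalized $x$ with $d(x,Y)=1=\|x\|=x^*(x)$ and $\dim P_Y(x)\geq k$, whereas you spell out the translation, sign adjustment, and the check that $w\neq 0$; these are exactly the details the paper leaves implicit.
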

\begin{proof}
	Let $Y$ not be a $k$-Chebyshev subspace of $X$. Let $x\in X$ be such that $d(x,Y)=1=\|x\|= x^*(x)$ and $\dim (P_Y(x))\geq k$. 
	
	Let $y_1,\dots,y_k\in Y$ be $k$ linearly independent elements such that $\|y_i-x\|=d(x,Y)=1$, $i=1,\dots, k $. Then, we have $-y_i\in S(X,x^*)-x$, where $x\in S(X,x^*)$. Hence, the result follows.
\end{proof}

We now generalize the aforementioned proposition for proximinal subspaces of $X$.

\begin{prop} \label{P1}
	Let $Y$ be a proximinal subspace of $X$. If $Y$ is not a $k$-Chebyshev subspace of $X$, then $\dim (\{S(X,y^\perp)- x\}\cap Y)\geq k$ for some $x\in S(X,y^\perp)$ and for some $y^\perp\in S_{Y^\perp}$.
\end{prop}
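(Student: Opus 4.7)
The strategy is to imitate Proposition~\ref{P2} but produce the required $y^\perp$ via a Hahn--Banach argument on the quotient $X/Y$. Since $Y$ is proximinal and not $k$-Chebyshev, Definition~\ref{D3}.(a) gives some $x_0 \in X$ with $\dim P_Y(x_0) \geq k$. By the paper's convention for $\dim$ of a (possibly non-linear) set, I can pick $y_0 \in P_Y(x_0)$ together with $y_1, \ldots, y_k \in P_Y(x_0)$ such that $y_1 - y_0, \ldots, y_k - y_0$ are linearly independent. Replacing $x_0$ by $x_0 - y_0$ and each $y_i$ by $y_i - y_0$ (so that $P_Y(x_0 - y_0) = P_Y(x_0) - y_0$), I reduce to the case $0 \in P_Y(x_0)$ with $y_1, \ldots, y_k$ linearly independent vectors of $P_Y(x_0)$, giving $\|x_0\| = d(x_0, Y) = \|x_0 - y_i\|$ for each $i$.

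Next, I would normalize by setting $x = x_0/\|x_0\|$. Then $\|x\| = 1$ and $d(x, Y) = 1$, so $x + Y$ is a unit vector in $X/Y$. A Hahn--Banach functional separating $x + Y$ from $0$, together with the standard isometric identification $(X/Y)^* \cong Y^\perp$, supplies $y^\perp \in S_{Y^\perp}$ with $y^\perp(x) = 1$; in particular $x \in S(X, y^\perp)$, which is the required base point.

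Finally, I would verify that each of the $k$ vectors $-y_i/\|x_0\| \in Y$ lies in $S(X, y^\perp) - x$. Indeed, since $y_i \in Y$, we have $y^\perp(x - y_i/\|x_0\|) = y^\perp(x) = 1$, and $\|x - y_i/\|x_0\|\| = \|x_0 - y_i\|/\|x_0\| = 1$ by the choice of the $y_i$'s, so $x - y_i/\|x_0\| \in S(X, y^\perp)$. Thus $-y_i/\|x_0\| \in \{S(X, y^\perp) - x\} \cap Y$ for each $i$, and since $0$ also lies in this set (as $x \in S(X, y^\perp)$), the linear independence of the $y_i$'s yields $\dim(\{S(X, y^\perp) - x\} \cap Y) \geq k$ in the sense of Notations~(3). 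I do not anticipate a serious obstacle; the only point requiring care is the initial translation and normalization, where one must use that both $P_Y$ and $d(\cdot, Y)$ behave equivariantly under translation by elements of $Y$.
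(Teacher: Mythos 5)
Your proposal is correct, and it rests on the same mechanism as the paper's proof: both arguments reduce to producing a norm-one functional $y^\perp\in Y^\perp$ with $y^\perp(x_0)=\|x_0\|=d(x_0,Y)$ at a point $x_0$ where $\dim P_Y(x_0)\geq k$, after which the translated best approximants all land in the face $S(X,y^\perp)$. The only difference is organizational: the paper passes to the auxiliary space $Z=\mathrm{span}(Y\cup\{x_0\})$, where $Y$ becomes a proximinal hyperplane, invokes Proposition~\ref{P2} there, and then extends the resulting $z^*$ to $X$ by Hahn--Banach (using that $S(Z,z^*)\subseteq S(X,x^*)$), whereas you obtain $y^\perp$ in one step as a norming functional on the quotient $X/Y$ via the isometry $(X/Y)^*\cong Y^\perp$. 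Your route bypasses Proposition~\ref{P2} entirely and is self-contained; the paper's route buys a cleaner logical layering (hyperplane case first, then the general case by extension) at the cost of the intermediate space. Both are valid, and your handling of the translation, normalization, and the $\dim$ convention of Notation~(3) is careful and correct.
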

\begin{proof}
	If $Y$ is not a $k$-Chebyshev subspace of $X$, then there exists $x_0\in X\sm Y$ such that $Y$ is not a $k$-Chebyshev subspace of $Z=\textrm{span} \{Y\cup x_0\}$. Clearly, $Y$ is proximinal in $Z$, and there exists $z^*\in S_{Z^*}$ such that $Y=\ker (z^*)$. From Proposition~\ref{P2}, we have $\dim (\{S(Z,z^*)- z\}\cap Y)\geq k$ for some $z\in S(Z,z^*)$. If $x^*$ is a Hahn--Banach extension of $z^*$ over $X$, then $\dim (\{S(X,x^*)- z\}\cap Y)\geq k$, and the result follows.
\end{proof}

Hence, we have the following.

\begin{thm}\label{PP2}
Let $Y$ be a proximinal subspace of $X$. $Y$ is a $k$-Chebyshev subspace of $X$ if and only if $\dim (\{S(X,y^\perp)- x\}\cap Y)< k$ for all $x\in S(X,y^\perp)$ and for all $y^\perp\in S_{Y^\perp}$.
\end{thm}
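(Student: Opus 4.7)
The plan is essentially to package the work already done in Propositions~\ref{T2} and \ref{P1} into a clean biconditional. Both directions are at hand; the theorem just records their conjunction.

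For the forward implication, suppose $Y$ is a proximinal $k$-Chebyshev subspace of $X$. Then Proposition~\ref{T2} applies verbatim: for every $y^\perp \in S_{Y^\perp}$ and every $x \in S(X,y^\perp)$, we have $\dim(\{S(X,y^\perp) - x\} \cap Y) < k$. No proximinality is even needed here; the one-line invocation of Proposition~\ref{T2} suffices.

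For the reverse implication, I would argue by contrapositive. Assume $Y$ is proximinal and fails to be $k$-Chebyshev. Then Proposition~\ref{P1} (which is exactly the proximinal version of this contrapositive, built from Proposition~\ref{P2} via a reduction to the hyperplane $\ker(z^*)$ inside $\mathrm{span}\{Y \cup x_0\}$ and a Hahn--Banach extension of $z^*$ to $X$) supplies some $y^\perp \in S_{Y^\perp}$ and some $x \in S(X,y^\perp)$ with $\dim(\{S(X,y^\perp) - x\} \cap Y) \geq k$. This negates the right-hand side of the theorem, completing the equivalence.

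There is no real obstacle, since all the substantive content has been isolated in the preceding two propositions; the only thing to note is that proximinality of $Y$ is used in the reverse direction (to invoke Proposition~\ref{P1}) but is unnecessary in the forward direction. One could therefore also state the forward implication under the weaker hypothesis that $Y$ is merely a subspace, but as stated the symmetric proximinal assumption is natural and simplifies the statement.
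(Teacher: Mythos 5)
Your proposal is correct and coincides with the paper's own (implicit) proof: the paper states Theorem~\ref{PP2} as an immediate consequence of Proposition~\ref{T2} (forward direction) and Proposition~\ref{P1} (reverse direction via contrapositive), exactly as you do. Your remark that proximinality is only needed for the reverse implication is also accurate, since the $k$-Chebyshev hypothesis in the forward direction already builds in proximinality by Definition~\ref{D3}.
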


We now state Theorem~\ref{T11} in terms of the characterization derived in Theorem~\ref{PP2}.

\begin{thm}\label{T5}
	Let $Y$ be a subspace of $X$. Consider the following statements:
	\bla
	\item $Y$ has property-$(k-U)$ in $X$.
	\item $Y^\perp$ is a $k$-Chebyshev subspace of $X^*$.
	\item $\dim (\{S(X^*,x)- x^*\}\cap Y^\perp)< k$ for all $x^*\in S(X^*,x)$ and for all $x\in  S_Y$.
	\el
	Then, $(a)\Lglra (b)$ and $(b)\Ra (c)$. If $Y$ is reflexive, then $(c)\Ra (b)$.
\end{thm}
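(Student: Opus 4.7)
The plan is to reduce everything to the already-established propositions by swapping the roles of the space and its subspace: apply the results proved for a subspace $Y$ of $X$ to the pair $(X^*,Y^\perp)$.

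For $(a)\Lra(b)$ there is nothing new to do: this is exactly Theorem~\ref{T11}.$(a)$, which has been cited from \cite{X}, so I would simply reference it.

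For $(b)\Ra(c)$, I would apply Proposition~\ref{T2} with $X$ replaced by $X^*$ and $Y$ replaced by $Y^\perp$. The annihilator of $Y^\perp$ taken inside $(X^*)^*=X^{**}$ is precisely $Y^{\perp\perp}$. Hence, assuming $Y^\perp$ is $k$-Chebyshev in $X^*$, Proposition~\ref{T2} yields
\[
\dim\bigl(\{S(X^*,\phi)-x^*\}\cap Y^\perp\bigr)<k\qquad\text{for all }\phi\in S_{Y^{\perp\perp}}\text{ and all }x^*\in S(X^*,\phi).
\]
For any $x\in S_Y$ its canonical image $J(x)\in X^{**}$ lies in $S_{Y^{\perp\perp}}$ and satisfies $S(X^*,J(x))=S(X^*,x)$, so $(c)$ follows by specializing to $\phi=J(x)$.

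For $(c)\Ra(b)$ under the reflexivity of $Y$, I would argue by contraposition using Proposition~\ref{P1}. First I would note that $Y^\perp$ is automatically proximinal in $X^*$: for any $x^*\in X^*$ the Hahn--Banach theorem gives a norm-preserving extension $\widetilde{x^*}$ of $x^*|_Y$, and then $x^*-\widetilde{x^*}\in Y^\perp$ with $\|x^*-(x^*-\widetilde{x^*})\|=\|\widetilde{x^*}\|=\|x^*|_Y\|=d(x^*,Y^\perp)$. Suppose $Y^\perp$ is not $k$-Chebyshev in $X^*$. Applying Proposition~\ref{P1} to $(X^*,Y^\perp)$ produces some $\phi\in S_{Y^{\perp\perp}}$ and $x^*\in S(X^*,\phi)$ with $\dim(\{S(X^*,\phi)-x^*\}\cap Y^\perp)\geq k$. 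The reflexivity of $Y$ forces $Y^{\perp\perp}=J(Y)$, so $\phi=J(x)$ for some $x\in S_Y$ and $S(X^*,\phi)=S(X^*,x)$; this contradicts $(c)$.

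The main place where one has to be careful is exactly this last step: Proposition~\ref{P1} naturally delivers a vector in $S_{Y^{\perp\perp}}\subseteq S_{X^{**}}$, whereas $(c)$ quantifies over $S_Y\subseteq S_X$. Without reflexivity of $Y$, the set $Y^{\perp\perp}$ may be strictly larger than the canonical image of $Y$, and the "witness" produced by the contrapositive could fail to come from $S_Y$. This is the sole reason the hypothesis $Y$ reflexive is imposed in the last implication, and it is the only nontrivial ingredient beyond a formal dualization of Propositions~\ref{T2} and \ref{P1}.
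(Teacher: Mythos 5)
Your proposal is correct and follows essentially the same route as the paper: $(a)\Lglra(b)$ is cited from \cite{X}, $(b)\Ra(c)$ dualizes Proposition~\ref{T2} to the pair $(X^*,Y^\perp)$ via the canonical image of $x$, and $(c)\Ra(b)$ uses Proposition~\ref{P1} (equivalently Theorem~\ref{PP2}) together with $Y^{\perp\perp}=J(Y)$ under reflexivity. Your explicit verification that $Y^\perp$ is proximinal in $X^*$ is a welcome detail that the paper leaves implicit, since Theorem~\ref{PP2} does require proximinality.
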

\begin{proof}
	$(a)\Lglra (b)$ follows from \cite{X}.
	
	For $(b)\Ra (c)$, assume that there exists $x\in S_Y$ such that $\dim (\{S(X^*,x)- x^*\}\cap Y^\perp)\geq k$ for some $x^*\in S(X^*,x)$. Identifying $x$ in $Y^{**}$ with its canonical image, say $\widehat{x}$, the aforementioned inequality can be stated as $\dim (\{S(X^*,\widehat{x})- x^*\}\cap Y^\perp)\geq k$. By Theorem~\ref{T2}, $Y^\perp$ is not a $k$-Chebyshev subspace. 
	
	$(c)\Ra (b)$ follows from Theorem \ref{PP2} as $Y=Y^{\perp\perp}$ when $Y$ is reflexive.
\end{proof}

\subsection{Applications to subspaces of Bochner Integrable functions}

We assume $(\Omega, \Sigma,\mu)$ to be a measure space. 

\begin{thm}\label{T14}
Let $\mu$ be non-atomic and $Y=\emph{span}\{\chi_{A_i}: 1\leq i\leq n\}$ be an $n$-dimensional subspace of $L_1(\mu)$,  where $A_1,A_2,\dots,A_n\in \Sigma$ are disjoint, and $\mu(\Omega\sm\cup_{i=1}^{n}A_i)>0$. Then, $Y$ does not have property-$(k-U)$ for any $k\in \mathbb{N}$.
\end{thm}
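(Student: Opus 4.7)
The plan is to exhibit a single norm-one functional $y^\ast\in Y^\ast$ whose Hahn--Banach extension set $HB(y^\ast)\ci L_\infty(\mu)\cong L_1(\mu)^\ast$ is infinite dimensional; by Definition~\ref{D2} this simultaneously defeats property-$(k-U)$ for every $k\in\mb N$.

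First I would compute the dual norm on $Y$: writing $y=\sum_{i=1}^n c_i\chi_{A_i}$, the disjointness of the $A_i$ gives $\|y\|_1=\sum_{i=1}^n|c_i|\mu(A_i)$, so a functional $y^\ast\in Y^\ast$ with values $\alpha_i:=y^\ast(\chi_{A_i})$ has norm $\|y^\ast\|=\max_{1\leq i\leq n}|\alpha_i|/\mu(A_i)$. I would then specialize to $\alpha_1=\mu(A_1)$ and $\alpha_i=0$ for $i\geq 2$, which gives $\|y^\ast\|=1$. A norm-preserving extension is an element $g\in L_\infty(\mu)$ with $\|g\|_\infty=1$, $\int_{A_1}g\,d\mu=\mu(A_1)$ and $\int_{A_i}g\,d\mu=0$ for $i\geq 2$; the first two conditions already pin down $g\equiv 1$ a.e.\ on $A_1$. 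Setting $B=\Omega\sm\bigcup_{i=1}^n A_i$, any $h\in L_\infty(B,\mu)$ with $\|h\|_\infty\leq 1$ then produces an extension $g_h:=\chi_{A_1}+h\chi_B\in HB(y^\ast)$, since the integrals over the $A_i$ are unaffected and $\|g_h\|_\infty=1$.

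Since $\mu(B)>0$ and $\mu|_B$ is non-atomic, for any $k\in\mb N$ one can partition $B$ into pairwise disjoint measurable subsets $B_1,\ldots,B_k$ of positive measure. The vectors $g_{\chi_{B_1}}-\chi_{A_1},\ldots,g_{\chi_{B_k}}-\chi_{A_1}$ are then $k$ linearly independent elements of $HB(y^\ast)-\chi_{A_1}$, so $\dim HB(y^\ast)\geq k$ for every $k$, and Definition~\ref{D2} closes the argument. I do not expect any genuine obstacle here; the only place where care is needed is to recognize that on the complement $B$ there is no linear constraint on $g$ beyond $|g|\leq 1$, and both hypotheses of the theorem ($\mu(B)>0$ and $\mu$ non-atomic) are invoked precisely at this step to produce arbitrarily many linearly independent choices of $h$.
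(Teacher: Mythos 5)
Your proof is correct and rests on the same construction as the paper's: both exploit non-atomicity to split $B=\Omega\setminus\bigcup_{i=1}^{n}A_i$ into $k$ disjoint sets $B_j$ of positive measure and observe that the functions $\chi_{B_j}$ lie in $Y^\perp$ and can be added to a norm-attaining extension without increasing the $L_\infty$-norm or disturbing the restriction to $Y$. The only difference is one of bookkeeping: you argue directly from Definition~\ref{D2} with the explicit functional $y^*=\chi_{A_1}|_Y$, whereas the paper takes a generic $f\in Y$ with $g=\mathrm{sgn}(f)$ and routes the conclusion through condition $(c)$ of Theorem~\ref{T5}.
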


\begin{proof}
Let $B=\Omega\sm\cup_{i=1}^{n}A_i$. Because $\mu(B)>0$ and $\mu$ is a non-atomic measure, there exists $B_1\ci B$ such that $\mu(B_1)<\mu(B)$. Hence, $\mu(B\sm B_1)>0$, and we find $B_2\ci B\sm B_1$ such that $0<\mu(B_2)<\mu(B\sm B_1)$. Continuing this process, we obtain $k$-disjoint sets $(B_j)_{j=1}^{k}$ of the positive measure. Let $f\in Y$; it is clear that $f=0$ a.e. on $B$. Now, $g_j:=\chi_{B_j}$ are $k$ linearly independent members of $L_\infty(\mu)$. Setting $g=\textrm{sgn} (f)$ on $\cup_{i=1}^{n}A_i$, $g=0$ a.e. on $B$. One can verify that $g_j+g,g\in S(L_1(\mu)^*,f)$ for $1\leq j\leq k$. Hence, $g_j\in S(L_1(\mu)^*,f)-g$ and $g_j\in Y^{\perp}$ as $B_j\cap (\cup A_i)=\phi$, $1\leq j\leq k$. From Theorem~\ref{T5}, it is clear that $Y$ does not have property-$(k-U)$. 
\end{proof}

\begin{prop}
Let $\mu$ be non-atomic and $Y$ be an $n$-dimensional subspace of $L_1(\mu)$.
Then, $Y$ is an $(n+1)$-Chebyshev subspace, but not a $k$-Chebyshev subspace, for $1\leq k\leq n$.
\end{prop}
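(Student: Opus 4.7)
The first half of the assertion is immediate. Because $Y$ is a finite-dimensional (hence closed) subspace of $L_1(\mu)$, it is proximinal, and any non-empty subset $A$ of $Y$ satisfies $\dim A = \dim(\mathrm{span}(A-a)) \leq \dim Y = n$; in particular $\dim P_Y(f) \leq n = (n+1)-1$ for every $f \in L_1(\mu)$, so $Y$ is $(n+1)$-Chebyshev.

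For the remaining assertion, because being $k$-Chebyshev implies being $(k+1)$-Chebyshev, it is enough to exhibit a single $f \in L_1(\mu)$ with $\dim P_Y(f) = n$. My plan is to choose $f$ so that $P_Y(f)$ contains an entire neighbourhood of $0$ in $Y$. The key ingredient is a sign-valued annihilator: fix a basis $g_1, \dots, g_n$ of $Y$ and consider the vector measure $\nu(E) := (\int_E g_1\, d\mu, \dots, \int_E g_n\, d\mu)$ on $\Sigma$. Since $\mu$ is non-atomic and each $g_i \in L_1(\mu)$, $\nu$ is a non-atomic $\mathbb{R}^n$-valued measure of bounded variation, so by Lyapunov's convexity theorem its range is convex; hence $\tfrac{1}{2}\nu(\Omega)$ lies in the range and is realised by some $E \in \Sigma$. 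Setting $h := \chi_E - \chi_{\Omega \sm E}$ gives $h \in L_\infty(\mu)$ with $|h| = 1$ a.e. and $\int h g_i\, d\mu = 0$ for each $i$, so that $h \in Y^\perp$.

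Now let $G := \sum_{i=1}^n |g_i|$ and $f := hG \in L_1(\mu)$. For any $g = \sum t_i g_i$ with $\max_i |t_i|$ sufficiently small, $|g| < G = |f|$ pointwise on $\mathrm{supp}(G)$, which forces $\mathrm{sgn}(f-g) = h$ there, while $f = g = 0$ off $\mathrm{supp}(G)$. Combined with $h \in Y^\perp$ and $\|h\|_\infty = 1$, this yields $\|f-g\|_1 = \int (f-g)h\, d\mu = \|G\|_1 = \|f\|_1$, and the universal lower bound $\|f-g\|_1 \geq |\int (f-g)h\, d\mu| = \|f\|_1$ gives $d(f,Y) = \|f\|_1$ and $g \in P_Y(f)$. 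Hence $P_Y(f)$ contains a neighbourhood of $0$ in $Y$, so $\dim P_Y(f) = n$, and $Y$ fails to be $k$-Chebyshev for any $1 \leq k \leq n$. The main non-trivial step is the production of the sign-valued $h \in Y^\perp$, which relies essentially on the non-atomicity of $\mu$ via Lyapunov's theorem; everything afterwards is a routine dual-norm computation using $L_1$--$L_\infty$ duality.
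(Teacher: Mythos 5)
Your proof is correct and follows essentially the same route as the paper: both hinge on producing a unimodular annihilator $h\in Y^\perp$ with $|h|=1$ a.e.\ and testing against $f=h\sum_i|g_i|$, the paper obtaining $h$ by citing Phelps's result that some extreme point of $B_{L_\infty(\mu)}$ annihilates $Y$ (extreme points of $B_{L_\infty}$ being exactly the $\pm 1$-valued functions), while you derive it self-containedly from Lyapunov's convexity theorem. Your packaging of the conclusion --- showing that a whole neighbourhood of $0$ in $Y$ lies in $P_Y(f)$ --- is a slight refinement of the paper's exhibition of the $n+1$ points $0,f_1,\dots,f_n$ in $P_Y(f)$, and it sidesteps the paper's somewhat imprecise pointwise claim $\mathrm{sgn}\bigl(g\sum_j|f_j|-f_i\bigr)=\mathrm{sgn}(g)$ by restricting to $g=\sum t_ig_i$ with $\max_i|t_i|<1$ so that the sign identity holds strictly on $\mathrm{supp}(G)$.
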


\begin{proof}
An extreme point $g\in B_{L_\iy(\mu)}$ exists such that $g|_Y=0$ (see \cite[Theorem~2.5]{P}). Hence, we have $|g|=1$ a.e. and $\int_{\Omega}gfd\mu=0$ for all $f\in Y$. Let $\{f_1,\dots,f_n\}$ be a basis for $Y$ and  $h=g\sum_{i=1}^{n}|f_i|$. One can check that $d(h,Y)=\|h\|$. It is evident that $\textrm{sgn}(g\sum_{i=1}^{n} |f_i|-f_i)=\textrm{sgn}(g)=g$, as $|g|=1$. Thus, $\|h-f_i\|=\int g(g\sum_{i=1}^{n} |f_i|-f_i)d\mu=\|h\|$. Hence, $0,f_1,\dots,f_n\in P_Y(h)$. Therefore, $Y$ is not a $k$-Chebyshev subspace for $1\leq k\leq n$. 
  
  Being an $n$-dimensional subspace, $Y$ is always an $(n+1)$-Chebyshev subspace.    
\end{proof}

\begin{thm}\label{H4}
Let $Z\ci Y\ci X$ be subspaces of $X$. If $Y$ has property-$(k-U)$ in $X$, then $Y/Z$ has property-$(k-U)$ in $X/Z$.
\end{thm}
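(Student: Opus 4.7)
The plan is to reduce the question on $Y/Z \subseteq X/Z$ back to the already-known bound on Hahn--Banach extensions at the level of $Y \subseteq X$ via the standard duality between quotient and annihilator.

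First, I would fix the identifications $(Y/Z)^* \cong Z^\perp_{Y^*}$ and $(X/Z)^* \cong Z^\perp_{X^*}$, where $Z^\perp_{Y^*} = \{y^* \in Y^* : y^*|_Z = 0\}$ and $Z^\perp_{X^*}$ is defined analogously. These identifications are isometric: to a functional $F \in (X/Z)^*$ one associates $x^*(x) = F(x+Z) \in Z^\perp_{X^*}$, and conversely. Take an arbitrary $F \in S_{(Y/Z)^*}$ and let $y^* \in Z^\perp_{Y^*} \subseteq S_{Y^*}$ be the corresponding functional.

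Next, I would verify the key inclusion: under the identification above,
\[
HB(F) \;=\; HB(y^*) \cap Z^\perp_{X^*} \;\subseteq\; HB(y^*),
\]
viewed as a subset of $X^*$. Indeed, $G \in HB(F)$ corresponds to $x^* \in Z^\perp_{X^*}$ with $x^*|_Y = y^*$ and $\|x^*\| = \|G\| = \|F\| = \|y^*\|$, which is exactly the condition $x^* \in HB(y^*)$ (with the extra constraint of lying in $Z^\perp_{X^*}$). Note $HB(F)$ is non-empty by Hahn--Banach, so I can pick some $a \in HB(F) \subseteq HB(y^*)$.

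Finally, using the notational convention $\dim A = \dim \mathrm{span}\{A - a\}$ from Section~1, the inclusion $HB(F) \subseteq HB(y^*)$ gives $\mathrm{span}\{HB(F) - a\} \subseteq \mathrm{span}\{HB(y^*) - a\}$, hence
\[
\dim HB(F) \;\leq\; \dim HB(y^*) \;\leq\; k-1,
\]
where the last inequality uses that $Y$ has property-$(k-U)$ in $X$ and $y^* \in S_{Y^*}$. This holds for every $F \in S_{(Y/Z)^*}$, so $Y/Z$ has property-$(k-U)$ in $X/Z$. The main (minor) obstacle is purely bookkeeping: checking that the standard quotient-annihilator identifications preserve both the norm and the restriction relation, so that $HB$-sets on the two sides correspond as claimed; once that is in place there is nothing more to do.
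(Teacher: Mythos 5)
Your proposal is correct and is essentially the paper's argument: both rest on the observation that, under the isometric identifications $(Y/Z)^*\cong Z^\perp_{Y^*}$ and $(X/Z)^*\cong Z^\perp_{X^*}$, norm-preserving extensions of $F$ over $X/Z$ are exactly the norm-preserving extensions of the corresponding $y^*$ over $X$ that annihilate $Z$, so $HB(F)\subseteq HB(y^*)$ and the dimension bound transfers. The paper phrases this contrapositively (pulling back $k+1$ affinely independent extensions), while you argue directly via the inclusion of spans; the content is the same.
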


\begin{proof}
Let $Y/Z$ does not have property-$(k-U)$ in $X/Z$. Let $y^*\in Z^{\perp}_{Y^*}$ and $x^*_i\in Z^\perp_{X^*}$, and $1\leq i\leq k+1$ be an affinely independent
norm, preserving the extensions of $y^*$. As $x^*_i|_Y=y^*$ and $x_i^*$, $i=1,2,\dots,k+1$ are affinely independent norm-preserving extensions of $y^*\in Y^*$; $Y$ does not have property-$(k-U)$ in $X$. 
\end{proof}

The converse part of this result is true under the assumption that $Z$ is an $M$-ideal in $X$.

\begin{thm}\label{H5}
Let $Y, Z$ be the subspace of $X$ such that $Z\ci Y\ci X$. Suppose $Z$ is  an $M$-ideal in $X$ and $Y/Z$ has property-$(k-U)$ in $X/Z$. Then, $Y$ has property-$(k-U)$ in $X$.
\end{thm}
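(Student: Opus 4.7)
The plan is to argue contrapositively and transplant a failure of property-$(k-U)$ from $Y$ in $X$ to $Y/Z$ in $X/Z$. Assume $Y$ does not have property-$(k-U)$, so that there exist $y^{*}\in S_{Y^{*}}$ and $k+1$ norm-preserving extensions $x_{1}^{*},\ldots,x_{k+1}^{*}\in HB(y^{*})$ whose differences $x_{2}^{*}-x_{1}^{*},\ldots,x_{k+1}^{*}-x_{1}^{*}$ are linearly independent. I will produce a norm-one functional $w^{*}\in (Y/Z)^{*}$ admitting $k+1$ affinely independent norm-preserving extensions in $(X/Z)^{*}$, contradicting the assumed property-$(k-U)$ of $Y/Z$ in $X/Z$.

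The engine is the $M$-ideal decomposition $X^{*}=Z^{\perp}\oplus_{\ell_{1}} Z^{\#}$. Writing $x_{i}^{*}=\alpha_{i}+\beta_{i}$ with $\alpha_{i}\in Z^{\perp}$, $\beta_{i}\in Z^{\#}$, and $\|x_{i}^{*}\|=\|\alpha_{i}\|+\|\beta_{i}\|$, the identities $\beta_{i}|_{Z}=x_{i}^{*}|_{Z}=:z^{*}$ and $\|\beta_{i}\|=\|\beta_{i}|_{Z}\|=\|z^{*}\|$ exhibit each $\beta_{i}$ as a norm-preserving extension of $z^{*}$ to $X$. Since every $M$-ideal enjoys property-$(U)$, all the $\beta_{i}$ collapse to the unique such extension $\tilde{z}$, giving $\alpha_{i}=x_{i}^{*}-\tilde{z}\in Z^{\perp}\cong (X/Z)^{*}$ with $\|\alpha_{i}\|=\|y^{*}\|-\|z^{*}\|$ independent of $i$. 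The affine independence of the $x_{i}^{*}$ is inherited by the $\alpha_{i}$ because $\alpha_{i}-\alpha_{1}=x_{i}^{*}-x_{1}^{*}$.

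Next I set $\hat{z}:=\tilde{z}|_{Y}$, whose norm equals $\|z^{*}\|$ by the sandwich $\|z^{*}\|\le\|\hat{z}\|\le\|\tilde{z}\|=\|z^{*}\|$, and $w^{*}:=y^{*}-\hat{z}\in Z^{\perp}_{Y^{*}}\cong (Y/Z)^{*}$. A direct computation yields $\alpha_{i}|_{Y}=w^{*}$ for every $i$, and the key norm identity $\|w^{*}\|=\|y^{*}\|-\|z^{*}\|$ follows from the two bounds $\|w^{*}\|\le\|\alpha_{i}\|=\|y^{*}\|-\|z^{*}\|$ and $\|y^{*}\|=\|w^{*}+\hat{z}\|\le\|w^{*}\|+\|z^{*}\|$. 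Consequently each $\alpha_{i}$ is a norm-preserving extension of $w^{*}$ from $Y/Z$ to $X/Z$, and dividing through by $\|w^{*}\|$ gives $k+1$ affinely independent norm-preserving extensions of a unit functional in $(Y/Z)^{*}$. The degenerate case $w^{*}=0$ reduces to $\|y^{*}\|=\|z^{*}\|$, in which case property-$(U)$ of $Z$ in $X$ forces $x_{i}^{*}=\tilde{z}$ for every $i$, contradicting affine independence.

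The main obstacle, and the only step that genuinely exploits the full $M$-ideal hypothesis rather than a bare ideal or property-$(HB)$ assumption on $Z$, is the norm identity $\|w^{*}\|=\|y^{*}\|-\|z^{*}\|$; this rests on the additive splitting $\|x_{i}^{*}\|=\|\alpha_{i}\|+\|\beta_{i}\|$ supplied by the $\ell_{1}$-direct-sum decomposition $X^{*}=Z^{\perp}\oplus_{\ell_{1}} Z^{\#}$, and without it one cannot pin down $\|\alpha_{i}\|$ to match $\|w^{*}\|$.
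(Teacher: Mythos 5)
Your proof is correct and uses the same essential mechanism as the paper's: a contrapositive argument resting on the $\ell_1$-additivity of the $M$-ideal decomposition $X^*=Z^\perp\oplus_{\ell_1}Z^\#$ to strip the $Z^\#$-component off the offending functionals and land them in $Z^\perp\cong(X/Z)^*$. The only organizational difference is that the paper works with the equivalent $k$-Chebyshev formulation, fixing one $x^*$ and varying its best approximants $y_i^\perp$ from $Y^\perp$ (so the common $Z^\#$-component of the $x^*-y_i^\perp$ is automatic from $Y^\perp\ci Z^\perp$), whereas you decompose each extension $x_i^*$ separately and invoke property-$(U)$ of the $M$-ideal $Z$ to reconcile their $Z^\#$-components --- a correct but avoidable extra step.
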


\begin{proof}
If possible, let $Y$ not have property-$(k-U)$ in $X$, that is, $Y^\perp$ is not a $k$-Chebyshev subspace of $X^*$. Let $x^*\in X^*$ and  $y_i^\perp, y_0^\perp\in P_{Y^\perp}(x^*)$, $i=1,2,\dots,k$, such that the vectors in  $\{y_i^\perp-y_0^\perp:1\leq i\leq k\}$ are linearly independent. As $Z$ is a  $M$-ideal  in $X$, there exists a subspace $G$ of $X^*$ such that $X^*=Z^{\perp}\oplus_{\ell_1} G$. Suppose $x^*=z^\perp+z^\#$, where $z^\perp\in Z^{\perp}$, $z^\#\in G$. 
\beqa
\mbox{Now}~ d(x^*,Y^\perp) &=& \inf_{y^\perp\in Y^\perp}\|x^*-y^\perp\| \\
&=& \inf_{y^\perp\in Y^\perp}(\|z^\perp-y^\perp\|+\|z^\#\|) \\ 
&=& d(z^\perp,Y^\perp)+\|z^\#\| \\
&\leq & \|z^\perp-y_i^\perp\|+\|z^\#\| \\
&=& \|x^*-y_i^\perp\|=d(x^*,Y^\perp).
\eeqa
  Hence, $d(z^\perp,Y^\perp)=\|z^\perp-y_i^\perp\|$, $i=0,1,\dots,k$. Therefore, $y_i^\perp,y_0^\perp\in P_{Y^\perp}(z^\perp)$, $i=1,2,\dots,k$. As $z^\perp\in (X/Z)^*=Z^\perp$ and $y_i^\perp\in (Y/Z)^\perp=Y^\perp_{Z^\perp}=Y^\perp$,  $(Y/Z)$ does not have property-$(k-U)$ in $(X/Z)$.
\end{proof}

\begin{thm}\label{H6}
	Let $X$ be an $L_1$-predual (or an $M$-embedded) space, and let $Y$ be a finite co-dimensional subspace of $X$. Then, $Y$ has property-$(k-U)$ in $X$ if and only if $Y^{\perp\perp}$ has property-$(k-U)$ in $X^{**}$.
\end{thm}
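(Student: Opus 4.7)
By Theorem~\ref{T11}.(a) (applied at two levels), it is enough to show that $Y^{\perp}$ is a $k$-Chebyshev subspace of $X^{*}$ if and only if $Y^{\perp\perp\perp}$ is a $k$-Chebyshev subspace of $X^{***}$. So I will reduce the statement entirely to a question about metric projections onto a fixed finite-dimensional subspace viewed in two different ambient spaces.

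First I would record two identifications. Since $Y$ is finite co-dimensional in $X$, $Y^{\perp}$ is finite-dimensional in $X^{*}$, with some basis $\{x_{1}^{*},\dots,x_{n}^{*}\}$. The canonical embedding $J_{X^{*}}:X^{*}\hookrightarrow X^{***}$ sends each $x_{i}^{*}$ into $Y^{\perp\perp\perp}$, because $\widehat{x_{i}^{*}}(y^{**})=y^{**}(x_{i}^{*})=0$ for every $y^{**}\in Y^{\perp\perp}$. Since $Y^{\perp\perp}$ has codimension $n$ in $X^{**}$, $Y^{\perp\perp\perp}$ has dimension $n$, and hence $Y^{\perp\perp\perp}=J_{X^{*}}(Y^{\perp})$ isometrically. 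Next, I would invoke the key structural fact: when $X$ is an $L_{1}$-predual, $X^{*}\cong L_{1}(\mu)$ sits as an $L$-summand in $X^{***}\cong L_{\infty}(\mu)^{*}$ via the Yosida--Hewitt decomposition; when $X$ is $M$-embedded, $X$ is by definition an $M$-ideal in $X^{**}$, so $X^{***}=X^{\perp}\oplus_{\ell_{1}}X^{*}$. In either case I have a contractive $L$-projection $\pi:X^{***}\to X^{*}$ with $\|x^{***}\|=\|\pi(x^{***})\|+\|(I-\pi)(x^{***})\|$ for all $x^{***}\in X^{***}$.

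Using this, for any $x^{***}\in X^{***}$ and any $y\in Y^{\perp\perp\perp}\subset X^{*}$ I have $\pi(x^{***}-y)=\pi(x^{***})-y$ and $(I-\pi)(x^{***}-y)=(I-\pi)(x^{***})$, so
\[
\|x^{***}-y\|=\|\pi(x^{***})-y\|+\|(I-\pi)(x^{***})\|.
\]
Taking infimum over $y\in Y^{\perp\perp\perp}$ (identified with $Y^{\perp}$) gives
\[
d(x^{***},Y^{\perp\perp\perp})=d(\pi(x^{***}),Y^{\perp})+\|(I-\pi)(x^{***})\|,
\]
and $y_{0}$ is a best approximation of $x^{***}$ in $Y^{\perp\perp\perp}$ if and only if $y_{0}$ is a best approximation of $\pi(x^{***})$ in $Y^{\perp}$. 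Consequently $P_{Y^{\perp\perp\perp}}(x^{***})=P_{Y^{\perp}}(\pi(x^{***}))$ as sets (under the canonical identification), and the two have the same dimension.

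The two directions now fall out. If $Y^{\perp}$ is $k$-Chebyshev in $X^{*}$, then for every $x^{***}\in X^{***}$ the set $P_{Y^{\perp\perp\perp}}(x^{***})$ has dimension at most $k-1$, so $Y^{\perp\perp\perp}$ is $k$-Chebyshev in $X^{***}$. Conversely, if $Y^{\perp\perp\perp}$ is $k$-Chebyshev in $X^{***}$, then for any $x^{*}\in X^{*}$ the canonical image $\widehat{x^{*}}\in X^{***}$ satisfies $\pi(\widehat{x^{*}})=\widehat{x^{*}}$, whence $\dim P_{Y^{\perp}}(x^{*})=\dim P_{Y^{\perp\perp\perp}}(\widehat{x^{*}})\leq k-1$. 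The main obstacle, I expect, is not a computation but a careful justification of the two structural facts used: the identification $Y^{\perp\perp\perp}=\widehat{Y^{\perp}}$ (which relies on finite co-dimensionality of $Y$) and the $L$-summand position of $X^{*}$ in $X^{***}$ in both the $L_{1}$-predual and $M$-embedded cases; once these are in place the argument collapses to the $L$-summand computation above.
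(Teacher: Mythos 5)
Your proof is correct and follows essentially the same route as the paper: both arguments rest on the identification $Y^{\perp\perp\perp}=\widehat{Y^{\perp}}$ (valid because $Y$ is finite co-dimensional) together with the fact that $X^{*}$ is an $L$-summand in $X^{***}$ under either hypothesis, which is exactly the mechanism the paper imports from its Theorem~\ref{H5}. The only difference is presentational: you package everything into the single identity $P_{Y^{\perp\perp\perp}}(x^{***})=P_{Y^{\perp}}(\pi(x^{***}))$ and read off both implications from it, whereas the paper handles the easy direction separately by lifting $k+1$ affinely independent norm-preserving extensions to their canonical images in $X^{***}$.
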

\begin{proof}
Let $Y$ not have property-$(k-U)$ in $X$. Let $y^*\in S_{Y^*}$ and $x_1^*,\dots,x_{k+1}^*\in HB(y^*)$ be affinely independent. Then, the canonical images $\widehat{x_1^*},\dots,\widehat{x_{k+1}^*}\in HB(\widehat{y^*})$. It is known that $Y^{\perp\perp*}\cong Y^{***}$, $Y^{\perp\perp}$ does not have property-$(k-U)$ in $X^{**}$.

Suppose that $Y^{\perp\perp}$ does not have property-$(k-U)$ in $X^{**}$, that is, $Y^{\perp\perp\perp}$ is not a $k$-Chebyshev subspace of $X^{***}$. Because $Y$ is a finite co-dimensional subspace, we have $Y^{\perp}\cong Y^{\perp\perp\perp}$. It is known that $X^*$ is an $L$-summand in $X^{***}$ when $X$ is an $L_1$-predual or an $M$-embedded space. Thus, $Y^\perp$ is not a $k$-Chebyshev subspace of $X^{***}$, and $X^*$ is an $L$-summand in $X^{***}$. Using similar arguments in the proof of Theorem \ref{H5}, we find that $Y^\perp$ is not a $k$-Chebyshev subspace of $X^*$.  
\end{proof}

The next theorem establishes that property-$(k-U)$ remains hereditary for the class of subspaces that are ideals in a Banach space. Let us recall that if $Y$ is an ideal in $X$ and $P:X^*\ra X^*$ is a corresponding norm-$1$ projection, then $R(P)\cong Y^*$ (see \cite[Pg.297]{OP}).

\begin{thm}\label{P43}
 Let $Y$ be an ideal in $X$. If $X$ has property-$(k-U)$ in $X^{**}$, then $Y$ has property-$(k-U)$ in $Y^{**}$.
 \end{thm}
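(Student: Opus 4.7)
The plan is to argue by contrapositive: assume $Y$ does not have property-$(k-U)$ in $Y^{**}$ and construct a witness that $X$ does not have property-$(k-U)$ in $X^{**}$. Pick $y^{*}\in S_{Y^{*}}$ and affinely independent norm-preserving extensions $\Phi_{0},\ldots,\Phi_{k}\in Y^{***}$ of $y^{*}$ to $Y^{**}$ provided by the negation of the hypothesis.

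The ideal structure supplies the canonical bridge between $Y^{***}$ and $X^{***}$. Let $P\colon X^{*}\to X^{*}$ be the associated norm-one projection with $\ker P=Y^{\perp}$. The restriction map $R(P)\to Y^{*}$ is an isometric isomorphism whose inverse $S\colon Y^{*}\to X^{*}$ is a linear isometry satisfying $(Sy^{*})|_{Y}=y^{*}$. Passing to biadjoints, $S^{**}\colon Y^{***}\to X^{***}$ is an isometric linear embedding whose image equals $R(P^{**})=R(P)^{\perp\perp}$, and which intertwines the canonical embeddings, so that $S^{**}(\widehat{y^{*}})=\widehat{Sy^{*}}$.

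Set $x^{*}:=Sy^{*}\in S_{X^{*}}$ and $\Psi_{i}:=S^{**}(\Phi_{i})\in X^{***}$. Since $S^{**}$ is an injective isometry, the $\Psi_{i}$'s are affinely independent and $\|\Psi_{i}\|=1$. Writing $\phi_{i}:=\Phi_{i}-\widehat{y^{*}}\in Y^{\perp}_{Y^{***}}$ gives $\Psi_{i}=\widehat{x^{*}}+S^{**}(\phi_{i})$, and an immediate calculation shows $S^{**}(\phi_{i})|_{Y}=0$. The next step is to upgrade this to $S^{**}(\phi_{i})|_{X}=0$, so that each $\Psi_{i}$ actually extends $x^{*}$ itself and belongs to $HB(x^{*})$ for $x^{*}$ viewed as a functional on $X$ with norm-preserving extensions in $X^{***}$. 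Once this is established, the $k+1$ functionals $\Psi_{0},\ldots,\Psi_{k}$ are affinely independent norm-preserving extensions of $x^{*}$ to $X^{**}$, contradicting property-$(k-U)$ of $X$ in $X^{**}$ and completing the proof.

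The principal obstacle lies in the last implication, namely verifying $S^{**}(\phi_{i})|_{X}=0$: by the defining formula one has $S^{**}(\phi_{i})(\widehat{x})=\phi_{i}(S^{*}(\widehat{x}))$, so what must be shown is that $\phi_{i}$ annihilates the subset $S^{*}(\widehat{X})\subseteq Y^{**}$ and not merely $\widehat{Y}$. This uses in an essential way the full strength of the ideal structure encoded in the projection $P$ (and its biadjoint $P^{**}$), together with the fact that $\phi_{i}$ is not an arbitrary element of $Y^{\perp}_{Y^{***}}$ but a best-approximation element in $P_{Y^{\perp}_{Y^{***}}}(\widehat{y^{*}})$. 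An alternative route, which I would attempt if the direct computation does not close, is to pass to the $k$-Chebyshev reformulation of Theorem~\ref{T5} and adapt the contractive-projection strategy of Theorem~\ref{H5}, working with $P^{**}$ on $X^{***}$ to move the failure of $k$-Chebyshev from $Y^{\perp}_{Y^{***}}$ inside $Y^{***}$ to $X^{\perp}_{X^{***}}$ inside $X^{***}$.
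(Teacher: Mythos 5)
Your plan reproduces, in expanded form, exactly the route the paper takes: push the affinely independent extensions $\Phi_i\in Y^{***}$ of $y^*$ forward through the isometric embedding $S^{**}\colon Y^{***}\to X^{***}$ induced by the Hahn--Banach extension operator $S$ of the ideal. Everything you assert up to the flagged point is correct: $S^{**}$ is an isometry onto $R(P)^{\perp\perp}$, it satisfies $S^{**}(\widehat{y^*})=\widehat{Sy^*}$, and $S^{**}(\phi_i)$ vanishes on $\widehat{Y}$. But the step you single out as the ``principal obstacle'' is not a routine verification that you have merely deferred; it is the entire content of the theorem, and the direct computation you describe does not close it. Writing $T=S^*J_X\colon X\to Y^{**}$ for the norm-one operator extending $J_Y$ that the ideal structure provides, one has $S^{**}(\phi_i)(\widehat{x})=\phi_i(Tx)$, so what you need is $\phi_i\perp T(X)$, whereas the hypothesis only gives $\phi_i\perp T(Y)=\widehat{Y}$; in general $T(X)$ is a strictly larger subspace of $Y^{**}$ (for $Y=c_0$, $X=\ell_\infty$ one even has $T(X)=Y^{**}$, so there the required orthogonality would force $\phi_i=0$). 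Being a best-approximation element is a norm condition on $\phi_i$ and does not by itself supply this extra annihilation. What the construction actually yields is that each $J_X^*\Psi_i=T^*\Phi_i$ is \emph{some} norm-preserving extension of $y^*$ to $X$, possibly a different one for each $i$, so the $\Psi_i$ need not lie in a single set $HB(x^*)$.

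Concretely, the argument does close if in addition $Y$ has property-$(U)$ in $X$ (equivalently, by Theorem~\ref{T6}, property-$(SU)$), for then all the $T^*\Phi_i$ coincide with $Sy^*$ and your $\Psi_i$ are indeed $k+1$ affinely independent elements of $HB(Sy^*)$. Without that, a dimension count on the affine map $\Psi\mapsto J_X^*\Psi$ restricted to $S^{**}(HB(y^*))$ only yields $\dim HB_{X^*}(y^*)\geq 1$, i.e.\ a failure of property-$(U)$ of $Y$ in $X$, not a violation of property-$(k-U)$ of $X$ in $X^{**}$. Your proposed fallback via the $k$-Chebyshev reformulation and $P^{**}$ hits the same wall: $S^{**}$ does not carry $\widehat{Y}^{\perp}_{Y^{***}}$ into $\widehat{X}^{\perp}_{X^{***}}$ for the same reason, and the argument of Theorem~\ref{H5} relies on an $\ell_1$-decomposition that a mere contractive projection does not supply. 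For what it is worth, the paper's own two-line proof (``$Y^*$ and $Y^{***}$ are subspaces of $X^*$ and $X^{***}$, hence done'') is silent on precisely this point, so you have correctly located where the argument must be completed; but as submitted your proposal is a plan with the decisive step still open.
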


 \begin{proof}
 Let $Y$ not have property-$(k-U)$ in $Y^{**}$. Let $y^*\in S_{Y^*}$ and $y^{***}_1,\dots,y^{***}_{k+1}\in HB(y^*)$ be affinely independent. Now, $Y^*$ and $Y^{***}$ are subspaces (up to isometric) of $X^*$ and $X^{***}$, respectively. Thus, $X$ does not have property-$(k-U)$ in $X^{**}$.
 \end{proof}

\subsection{Property-$(k-U)$ and multismoothness in Banach spaces}

\bdfn\cite{LR}
Let $X$ be a Banach space and $x\in S_X$.
\bla
\item  $x$ is said to be a {\it $k$-smooth point} if $\dim (S(X^*,x))\leq k-1$.
\item A Banach space $X$ is said to be {\it $k$-smooth} if $\dim (S(X^*,x))\leq k-1$ for all $x\in S_{X}$.
\el
\edfn

\begin{thm}\label{421}
Let $Y$ be a subspace of $X$. Suppose every $y\in S_Y$ is a $k$-smooth point in $X$. Then, any reflexive subspace $Z$ of $Y$ has property-$(k-U)$ in $X$.
\end{thm}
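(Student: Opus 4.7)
The plan is to verify the defining condition of property-$(k-U)$ directly, exploiting the reflexivity of $Z$ to produce a norm-attaining vector in $S_Y$ at which we can apply the $k$-smoothness hypothesis. Concretely, fix $z^* \in S_{Z^*}$; since $Z$ is reflexive, there exists $z_0 \in S_Z$ with $z^*(z_0) = 1$. Crucially, $z_0 \in S_Z \subseteq S_Y$, so by hypothesis $z_0$ is a $k$-smooth point of $X$, which means $\dim S(X^*, z_0) \leq k - 1$.

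Next I would argue that $HB(z^*) \subseteq S(X^*, z_0)$: any $x^* \in HB(z^*)$ has $\|x^*\| = \|z^*\| = 1$ and $x^*(z_0) = z^*(z_0) = 1 = \|z_0\|$, placing $x^*$ in $S(X^*, z_0)$. Recalling from the notation section that $\dim A = \dim(\mathrm{span}\{A - a\})$ for any $a \in A$, pick $x_0^* \in HB(z^*)$; then
\[
\mathrm{span}\{HB(z^*) - x_0^*\} \subseteq \mathrm{span}\{S(X^*, z_0) - x_0^*\},
\]
so $\dim HB(z^*) \leq \dim S(X^*, z_0) \leq k - 1$. This is precisely the property-$(k-U)$ condition from Definition~\ref{D2}.

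There is essentially no obstacle; the only subtlety worth flagging explicitly is that one must use reflexivity to guarantee a norm-attaining point $z_0 \in S_Z$ (otherwise the bridge to the $k$-smoothness hypothesis is unavailable), and that the dimension comparison proceeds via the affine-span convention fixed at the outset of the paper.
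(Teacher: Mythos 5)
Your proof is correct and follows essentially the same route as the paper's: both use reflexivity of $Z$ to produce a norm-attaining point $z_0\in S_Z\subseteq S_Y$, observe that every element of $HB(z^*)$ lies in $S(X^*,z_0)$, and conclude via the dimension bound from $k$-smoothness. The only difference is presentational --- you argue directly while the paper argues by contraposition --- so there is nothing of substance to add.
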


\begin{proof}
Suppose every $y\in S_Y$ is a $k$-smooth point. Let $Z$ be a reflexive subspace of $X$ that does not have property-$(k-U)$ in $X$. Let $z^*\in S_{Z^*}$ and $x^*_1,\dots,x_{k+1}^*\in HB(y^*)$  be affinely independent. Because $Z$ is reflexive, there exists $z_0\in S_Z$ such that $z^*(z_0)=1$ and $x_i^*(z_0)=1$, $1\leq i\leq k+1$. Hence, $\dim (S(X^*,z_0))\geq k$, that is, $z_0$ is not a $k$-smooth point.
\end{proof}

In particular, we have the following.

\begin{prop}\label{P4}
Let $F$ be a finite-dimensional subspace of $X$. Suppose that every $x\in \emph{ext}(B_F)$ is a $k$-smooth point in $X$. Then, any subspace $Z$ of $F$ has property-$(k-U)$ in $X$.
\end{prop}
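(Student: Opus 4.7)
The plan is to reduce Proposition~\ref{P4} to Theorem~\ref{421}. Since $F$ is finite-dimensional, every subspace $Z\ci F$ is automatically reflexive, so by Theorem~\ref{421} applied with $Y=F$, it is enough to verify that \emph{every} $y\in S_F$ (not just the extreme points of $B_F$) is a $k$-smooth point of $X$.

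To establish this upgrade, fix $y\in S_F$. Because $B_F$ is compact and convex in the finite-dimensional space $F$, the Krein--Milman theorem (combined with Carath\'eodory) gives a representation
$$y=\sum_{i=1}^{m}\la_i e_i,\qquad e_i\in\textrm{ext}(B_F)\ci S_F,\ \la_i>0,\ \sum_{i=1}^m\la_i=1.$$
I claim that $S(X^*,y)=\bigcap_{i=1}^{m}S(X^*,e_i)$. The inclusion $\supseteq$ is immediate from linearity. For $\ci$, take $x^*\in S(X^*,y)$, so $\|x^*\|=1$ and $x^*(y)=1$; then $1=\sum_i\la_i x^*(e_i)$ with each $x^*(e_i)\leq\|x^*\|\|e_i\|=1$, and equality in this convex combination forces $x^*(e_i)=1$ for every $i$, i.e.\ $x^*\in S(X^*,e_i)$.

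By hypothesis each $e_i$ is $k$-smooth in $X$, so $\dim S(X^*,e_i)\leq k-1$. Since the notion of dimension in Notation~(3) is monotone under containment of non-empty sets, the identity above yields $\dim S(X^*,y)\leq\dim S(X^*,e_1)\leq k-1$, so $y$ is a $k$-smooth point of $X$. Invoking Theorem~\ref{421} with $Y=F$ and the reflexive subspace $Z\ci F$ completes the proof. No substantial obstacle is expected; the main point to verify is the identity $S(X^*,y)=\bigcap_i S(X^*,e_i)$, which follows at once from the convex-combination representation of $y$.
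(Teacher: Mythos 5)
Your proposal is correct and follows essentially the same route as the paper: write $y\in S_F$ as a finite convex combination of extreme points of $B_F$, observe that $S(X^*,y)$ is contained in the intersection of the $S(X^*,e_i)$, conclude every point of $S_F$ is $k$-smooth, and invoke Theorem~\ref{421}. The only (harmless) difference is that you prove the full equality $S(X^*,y)=\bigcap_i S(X^*,e_i)$, whereas the paper uses only the inclusion $\subseteq$, which is all that is needed.
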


\begin{proof}
Let $x\in S_F$, then $(x_i)_{i=1}^{n}\ci \textrm{ext}(B_F)$ such that $x=\sum_{i=1}^{n}\lambda_ix_i$ for some $(\lambda_i)_{i=1}^n$ with $\la_i\geq 0$ and $\sum_i \la_i=1$. Clearly, we have $S(X^*,x)\ci \cap_{i=1}^{n}S(X^*,x_i)$. Hence, every $x\in S_F$ is a $k$-smooth point in $X$. The result follows from Theorem~\ref{421}.
\end{proof}

We now identify some subspaces of $c_0$ that have property-$(k-U)$. 

\begin{thm}\label{T15}
Let $F$ be a finite-dimensional subspace of $c_0$. Then, $F$ has property-$(k-U)$ in $c_0$ for some $k\in \mathbb{N}$.
\end{thm}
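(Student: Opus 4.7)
The strategy is to invoke Theorem~\ref{421} with $Y=Z=F$ by showing that every vector in $S_F$ is a $k$-smooth point of $c_0$ for a uniform $k\in\mb{N}$ depending only on $F$. To identify the smoothness index, fix $x=(x_n)\in S_{c_0}$ and set $M(x)=\{n\in\mb{N}:|x_n|=1\}$. A direct computation shows that $x^*=(x_n^*)\in S(\ell_1,x)$ iff $\textrm{supp}(x^*)\ci M(x)$, $\textrm{sgn}(x_n^*)=\textrm{sgn}(x_n)$ on $\textrm{supp}(x^*)$, and $\sum_n|x_n^*|=1$; thus $S(\ell_1,x)$ is an $(|M(x)|-1)$-dimensional simplex, and $x$ is $k$-smooth in $c_0$ iff $|M(x)|\leq k$.

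It therefore suffices to produce $N\in\mb{N}$ such that $M(x)\ci\{1,2,\dots,N\}$ for every $x\in S_F$. Suppose this fails. Then there exist $x_m\in S_F$ and indices $n_m\in M(x_m)$ with $n_m\to\iy$. Since $F$ is finite-dimensional, $S_F$ is norm-compact, so we may pass to a subsequence and assume $x_m\to x$ in the $c_0$-norm for some $x\in S_F$. Uniform convergence then gives
\[
|x(n_m)|\geq |x_m(n_m)|-\|x-x_m\|_\iy = 1-o(1),
\]
contradicting $x\in c_0$, which forces $x(n_m)\to 0$ as $n_m\to\iy$.

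With $N$ so chosen, $|M(x)|\leq N$ for every $x\in S_F$, so each element of $S_F$ is an $N$-smooth point of $c_0$. Since $F$ is reflexive, Theorem~\ref{421} applies and yields that $F$ has property-$(N-U)$ in $c_0$. The substantive step is the compactness argument identifying a uniform ``window'' $\{1,\dots,N\}$ that contains all norming coordinates of unit vectors in $F$; the rest is bookkeeping with the explicit description of the state spaces $S(\ell_1,x)$.
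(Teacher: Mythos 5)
Your proof is correct, but it takes a genuinely different route from the paper's. The paper first invokes the polyhedrality of finite-dimensional subspaces of $c_0$: it asserts that $\textrm{ext}(B_F)$ is a finite set with $B_F=\textrm{conv}(\textrm{ext}(B_F))$, notes that each extreme point $x_i\in c_0$ with $\|x_i\|_\iy=1$ has only finitely many coordinates of modulus one (hence is $k_i$-smooth), takes $k=\max k_i$, and then applies Proposition~\ref{P4}, which transfers $k$-smoothness from $\textrm{ext}(B_F)$ to all of $S_F$ by convexity. You instead prove the uniform smoothness bound on all of $S_F$ directly: you identify $S(\ell_1,x)$ as the simplex spanned by $\{\textrm{sgn}(x_n)e_n^*:n\in M(x)\}$, so that $k$-smoothness amounts to $|M(x)|\leq k$, and then use norm-compactness of $S_F$ to extract a uniform window $\{1,\dots,N\}$ containing $M(x)$ for every $x\in S_F$; you conclude via Theorem~\ref{421} with $Y=Z=F$ (reflexive since finite-dimensional). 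Your compactness argument is clean and correct, and it has the advantage of bypassing the finiteness of $\textrm{ext}(B_F)$ --- a true but nontrivial fact (Klee's polyhedrality of $c_0$) that the paper uses without justification; in effect your argument supplies the analytic content that underlies that fact. What the paper's route buys is the slightly stronger conclusion packaged in Proposition~\ref{P4}, namely that \emph{every} subspace of $F$ simultaneously has property-$(k-U)$ with the same $k$, though your uniform bound on $S_F$ yields this as well.
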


\begin{proof}
Because $F$ is a finite-dimensional subspace of $c_0$, we have that $\textrm{ext}(B_F)$ is finite set and $B_F=\textrm{conv}(\textrm{ext}(B_F))$. Let $\textrm{ext}(B_F)=\{x_1,\dots,x_n\}$. As $x_i\in c_0$ and $\|x_i\|_\iy=1$, there exists $k_i\in \mathbb{N}$ such that $|x_i(j)|=1$, where $j\in \{n_1,\dots, n_{k_i}\}$.
It follows that there exists a large $k$ such that all $x_i'$s are $k$-smooth. Proposition~\ref{P4} leads to the conclusion. 
\end{proof}

\begin{thm}
Let $F$ be a finite dimensional subspace of $c_0$, which is also an ideal in $c_0$, then $F^*$ is a $k$-strictly convex subspace of $\ell_1$ for some natural $k$.
\end{thm}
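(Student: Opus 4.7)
The plan is to combine three facts: the ideal hypothesis realizes $F^*$ isometrically inside $\ell_1$; a uniform $k$-smoothness bound across $\textrm{ext}(B_F)$ forces every subspace of $F$ to have property-$(k-U)$ in $c_0$; and a Hahn--Banach lifting argument transfers this to property-$(k-U)$ in $F$, after which Theorem~\ref{T11}$(b)$ applied to $F$ delivers the $k$-strict convexity of $F^*$.

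First I would record the ideal condition. Since $F$ is an ideal in $c_0$, there exists a contractive projection $P:\ell_1\to\ell_1$ with $\ker P=F^\perp$, and the restriction map identifies $R(P)$ isometrically with $F^*$, exhibiting $F^*$ as a subspace of $\ell_1$. Next, as $F$ is finite-dimensional, $\textrm{ext}(B_F)$ is a finite set. For each $x\in\textrm{ext}(B_F)\ci c_0$, the index set $J(x)=\{j\in\mb{N}:|x(j)|=1\}$ is finite, and a direct check (using $\phi(x)=\|\phi\|_1\|x\|_\iy$ at any norming $\phi$) shows $S(\ell_1,x)=\textrm{conv}\{\textrm{sgn}(x(j))e_j:j\in J(x)\}$, a simplex of dimension $|J(x)|-1$. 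Hence each such $x$ is $|J(x)|$-smooth in $c_0$; setting $k=\max\{|J(x)|:x\in\textrm{ext}(B_F)\}$, every extreme point of $B_F$ is $k$-smooth in $c_0$. By Proposition~\ref{P4}, every subspace $Z$ of $F$ has property-$(k-U)$ in $c_0$.

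Finally I would lift this to property-$(k-U)$ in $F$. Given $Z\ci F$, if $z^*\in S_{Z^*}$ admitted $k+1$ affinely independent norm-preserving extensions $f_1,\dots,f_{k+1}\in F^*$, Hahn--Banach produces $c_i\in\ell_1$ with $c_i|_F=f_i$ and $\|c_i\|=\|f_i\|=\|z^*\|$. Any relation $\sum\la_ic_i=0$ with $\sum\la_i=0$ restricts on $F$ to $\sum\la_if_i=0$ with $\sum\la_i=0$, forcing $\la_i=0$ for all $i$; so $(c_i)_{i=1}^{k+1}$ would be affinely independent norm-preserving extensions of $z^*$ in $c_0^*$, contradicting the previous paragraph. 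Thus every subspace of $F$ has property-$(k-U)$ in $F$, and Theorem~\ref{T11}$(b)$ applied to the Banach space $F$ yields that $F^*$ is $k$-strictly convex. Combined with the isometric embedding from the first step, $F^*$ is a $k$-strictly convex subspace of $\ell_1$.

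The substantive point is the Hahn--Banach lift in the last paragraph, since Proposition~\ref{P4} supplies property-$(k-U)$ only in the ambient $c_0$, whereas Theorem~\ref{T11}$(b)$ requires it inside $F$. The ideal hypothesis itself plays no role in proving $k$-strict convexity of $F^*$ abstractly; it is used only to place $F^*$ isometrically inside $\ell_1$, so that the conclusion can be phrased as a statement about a subspace of $\ell_1$.
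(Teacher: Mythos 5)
Your proof is correct, and up to the final dualization it follows the paper's path: the ideal hypothesis is used only to realize $F^*$ isometrically inside $\ell_1$ as the range of the associated contractive projection, and the computation of $S(\ell_1,x)$ for $x\in\textrm{ext}(B_F)$ gives the uniform smoothness bound $k$ exactly as in Theorem~\ref{T15}. Where you diverge is the last step. The paper argues that every point of $S_F$ is $k$-smooth, declares $F$ a $k$-smooth space, and concludes from the reflexive duality $F=(F^*)^*$ that $F^*$ is $k$-strictly convex; this is a one-line appeal to the smoothness/strict-convexity duality, and it silently uses the fact that $k$-smoothness of $x$ in $c_0$ (i.e.\ $\dim S(\ell_1,x)\leq k-1$) passes to $k$-smoothness of $x$ as a point of $F$ (i.e.\ $\dim S(F^*,x)\leq k-1$), which holds because restriction maps $S(\ell_1,x)$ affinely onto $S(F^*,x)$. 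You instead route through property-$(k-U)$: Proposition~\ref{P4} gives every subspace of $F$ property-$(k-U)$ in $c_0$, your Hahn--Banach lifting of affinely independent extensions transfers this to property-$(k-U)$ in $F$ (the $k$-analogue of Proposition~\ref{37}$(a)$ and Theorem~\ref{T28}), and Theorem~\ref{T11}$(b)$ applied to the space $F$ finishes. Your version is slightly longer but makes explicit precisely the ambient-to-intrinsic passage that the paper's terse proof elides; both arguments are sound, and your closing remark that the ideal hypothesis contributes nothing beyond the isometric embedding $F^*\ci\ell_1$ matches the paper's use of it.
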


\begin{proof}
First, $F^*$ is a subspace of $\ell_1$. We have already observed that $F$ has property-$(k-U)$ for some $k$. It is proven that any point in $S_F$ is a $k$-smooth point. Hence, $F$ is actually a $k$-smooth subspace of $c_0$. Because $F =(F^*)^*$, we have that $F^*$ is k-strictly convex.
\end{proof}

\end{document}